\newtheorem{theorem}{Theorem}[section]
\newtheorem{lemma}[theorem]{Lemma}
\newtheorem{proposition}[theorem]{Proposition}
\newtheorem{corollary}[theorem]{Corollary}
\theoremstyle{definition}
\newtheorem{definition}[theorem]{Definition}
\newtheorem{example}[theorem]{Example}
\theoremstyle{remark}
\newtheorem{remark}[theorem]{Remark}
\newtheorem{question}[theorem]{Question}
\numberwithin{equation}{section}
\DeclareMathOperator{\Aut}{Aut}
\DeclareMathOperator{\End}{End}
\DeclareMathOperator{\Hol}{Hol}
\DeclareMathOperator{\id}{id}
\DeclareMathOperator{\Soc}{Soc}
\DeclareMathOperator{\ad}{ad}
\DeclareMathOperator{\Der}{Der}
\DeclareMathOperator{\Laz}{\mathbf{Laz}}
\DeclareMathOperator{\aff}{\mathfrak{aff}}
\DeclareMathOperator{\BCH}{BCH}
\DeclareMathOperator{\Fix}{Fix}
\newcommand{\fg}{\mathfrak{g}}
\newcommand{\fh}{\mathfrak{h}}
\newcommand{\fa}{\mathfrak{a}}
\newcommand{\fb}{\mathfrak{b}}
\newcommand{\tr}{\triangleright}
\newcommand{\cL}{\mathcal{L}}
\newcommand{\Z}{\mathbb{Z}}
\newcommand{\F}{\mathbb{F}}
\newcommand{\bS}{\mathbf{S}}
\newcommand{\bL}{\mathbf{L}}
\DeclareMathOperator{\Ann}{Ann}
\DeclareMathOperator{\pr}{pr}
\newcommand{\U}{\mathcal{U}}
\newcommand{\hU}{\hat{\mathcal{U}}}
\begin{document}

\title{A Lazard correspondence for post-Lie rings and skew braces}
\author{S.~Trappeniers}

\address{Department of Mathematics and Data Science, 
 Vrije Universiteit Brussel, 
 Pleinlaan 2, 
 1050 Brussels, Belgium} 

\email{senne.trappeniers@vub.be} 
\urladdr{https://sites.google.com/view/sennetrappeniers/}

\subjclass[2020] {20N99, 17D25, 20F40} 
\begin{abstract}
    We develop a Lazard correspondence between post-Lie rings and skew braces that satisfy a natural completeness condition. This is done through a thorough study of how the Lazard correspondence behaves on semi-direct sums of Lie rings. In particular, for a prime $p$ and $k<p$, we obtain a correspondence between skew braces of order $p^k$ and left nilpotent post-Lie rings of order $p^k$ on a nilpotent Lie ring. This therefore extends results by Smoktunowicz.
\end{abstract}
\keywords{post-Lie ring, pre-Lie ring, brace, skew brace, left nilpotent, Lazard correspondence}
\maketitle

\section{Introduction}
In 1954, Lazard proved a correspondence between Lie rings and groups satisfying certain completeness and divisibility conditions \cite{Lazard1954SurLie}. This built upon earlier work by Malcev \cite{Malcev1949}. The nowadays better known version of the Lazard correspondence, although more restrictive than the original version, states that for $p^k$ a prime power, there is a bijective correspondence between nilpotent Lie rings of order $p^k$ and nilpotency class at most $p-1$, and nilpotent groups of order $p^k$ of nilpotency class at most $p-1$. In particular, one finds a correspondence between groups of order $p^k$ and nilpotent Lie rings of order $p^k$ for $k<p$. This has proved to be very useful in the classification of finite $p$-groups, see for example \cite{Newman2004GroupsPrime}. 

Pre-Lie algebras, also called left-symmetric algebras, arose as early as in 1881 in the work of Cayley \cite{Cayley1881}, only to resurface again 80 years later in the works of Vinberg \cite{Vinberg1963} and Koszul \cite{Koszul1961}. See \cite{Burde2006Left-symmetricPhysics} for a survey on pre-Lie algebras. One of the motivations to study pre-Lie algebras comes from geometry, as they appear naturally in the study of regular affine actions of Lie groups \cite{Kim1986CompleteGroups}. More recently, the notion of a post-Lie algebra was introduced in the works of Vallette \cite{Vallette2007HomologyPosets}, see also \cite{Curry2019WhatIntegration}. For $R$ a commutative ring, a \emph{post-Lie $R$-algebra} is a Lie $R$-algebra $\mathfrak{a}$, whose Lie bracket we denote by $[-,-]$, together with an $R$-bilinear map $\triangleright: \mathfrak{a}\times \mathfrak{a}\to \mathfrak{a}$ satisfying:
    \begin{align}
        x\tr[y,z]&=[x\tr y,z]+[y,x\tr z],\label{eq: postLie 1}\\\
        [x,y]\tr z&=(x,y,z)_\tr-(y,x,z)_\tr,\label{eq: postLie 2}
    \end{align}
    with $(x,y,z)_\tr:=x\tr (y\tr z)-(x\tr y)\tr z$. If $\fa$ is abelian, then $(\fa,\tr)$ is a \emph{pre-Lie $R$-algebra}. A post-Lie $\Z$-algebra, respectively pre-Lie $\Z$-algebra, is called a \emph{post-Lie ring}, respectively a \emph{pre-Lie ring}. For a post-Lie $R$-algebra $(\fa,\tr)$, define 
    \begin{equation}\label{eq: a^circ}
        \{a,b\}:=[a,b]+a\tr b-b\tr a.
    \end{equation}
    It follows directly from \eqref{eq: postLie 1} and \eqref{eq: postLie 2} that this defines a second Lie $R$-algebra structure on the underlying $R$-algebra $\fa$. We will denote this Lie $R$-algebra by $\fa^\circ$.

Braces were introduced in 2006 by Rump \cite{rump2006modules,rump2007braces} in the study of the set-theoretical solutions to the Yang--Baxter equation. They were subsequently generalized by Guarnieri and Vendramin to skew braces \cite{guarnieri2017skew}. 
A \emph{skew brace} is a triple $(A,\cdot,\circ)$ with $A$ a set and $(A,\cdot)$ and $(A,\circ)$ group structures such that 
$$a\circ (b\cdot c)=(a\circ b)\cdot a^{-1}\cdot (a\circ b),$$
holds for all $a,b,c\in A$. Here $a^{-1}$ denotes the inverse of $a$ in $(A,\cdot)$. When $(A,\cdot)$ is abelian, then $(A,\cdot,\circ)$ is called a \emph{brace}. For braces, the notation $(A,+,\circ)$ is also common (in fact, this is also the case for skew braces, but to avoid any possible confusion we will abstain from using $+$ for a non-abelian group operation).
Skew braces are essentially the same as bijective 1-cocycles of groups or regular subgroups of the holomorph of a group. In this sense, they were already being studied in a geometric framework in the form of regular affine actions of Lie groups on vector spaces \cite{Auslander1977SimplyTransitive,Milnor1977OnFundamental} long before they were formally introduced. Since pre-Lie algebras played an important role there, some connection between skew braces and pre-Lie rings is to be expected. This was indeed first stated by Rump \cite{Rump2014TheGroup} for $\mathbb{R}$-braces. Bachiller then used the Lazard correspondence to construct a counterexample of a conjecture on braces \cite{Bachiller2016}, starting from a counterexample by Burde \cite{Burde1996AffineNilmanifolds} to a question by Milnor \cite{Milnor1977OnFundamental} (note that the first counterexample to Milnor's question was given earlier by Benoist \cite{Benoist1995}). 

Smoktunowicz proved that the construction of the group of flows, as defined for pre-Lie algebras in \cite{Agrachev1981ChronologicalFields}, could be adapted to left nilpotent pre-Lie rings of prime power order $p^k$, with $k+1<p$, in order to obtain braces \cite{Smoktunowicz2022OnRings} with the same additive structure. Here, a post-Lie ring $(\fa,\tr)$ is \emph{left nilpotent} if there exists some $n$ such that $\fa^n=0$ where $\fa^1=\fa$ and $\fa^{i+1}$ is the subgroup of $(\fa,+)$ generated by $a\tr b$ for $a\in \fa$, $b\in \fa^i$, \emph{right nilpotency} is defined analogously. Similarly, a skew brace $(A,\cdot,\circ)$ is \emph{left nilpotent} if there exists some $n$ such that $A^n=1$, where $A^1=A$ and $A^{i+1}$ is the subgroup of $(A,\cdot)$ generated by the elements $a*b=a^{-1}\cdot(a\circ b)\cdot b^{-1}$ for all $a\in A$, $b\in A^{i}$, \emph{right nilpotency} is defined analogously. Recall that a skew brace of prime power order is always left nilpotent \cite{CSV19}. It is not immediately clear whether Smoktunowicz's construction is invertible, an inverse construction is given only in the case of strongly nilpotent braces of order $p^k$, with $k+1<p$, and with strong nilpotency index at most $p-1$. Here strong nilpotency is equivalent to the structures being both left and right nilpotent \cite{smoktunowicz2018engel}. This correspondence was used to classify all strongly nilpotent braces of order $p^4$, for $p>5^5$, in \cite{Puljic2022Classificationp4} while the non-strongly nilpotent ones of this order were classified earlier in \cite{Puljic2022SomeExtensions}.  In \cite{Shalev2024FromBraces}, Shalev and Smoktunowicz give a construction that starts from a brace $(A,+,\circ)$ of order $p^k$, with $k+1<p$, in order to obtain a pre-Lie ring with additive group $(A,+)/\{a\in A\mid p^2a=0\}$. Although the strong nilpotency condition is no longer present, one can not hope that this yields a correspondence as a quotient by a non-trivial subgroup is taken. 

Recently, Bai, Guo, Sheng and Tang showed how from a regular affine action of a Lie group onto another Lie group, one obtains a post-Lie algebra through differentiation \cite{Bai2023Post-groupsEquation} (note that, although not explicitly stated in this way, this construction also appears in \cite{Burde2009AffineGroups}). This extended the earlier techniques to obtain a pre-Lie algebra in the case that the latter group is $\mathbb{R}^n$. Conversely, also in \cite{Bai2023Post-groupsEquation} a construction by formal integration was given to construct a skew brace starting from a post-Lie algebra of characteristic 0 and with some completeness conditions.

In \cref{section: preliminaries} we recall the necessary information on the Lazard correspondence, and some basic notions on post-Lie rings and skew braces. In \cref{section: post lie,section: skew} we introduce the notion of filtered post-Lie rings and filtered skew braces. For both structures we define the property of being Lazard in a natural way. We construct a natural filtration on any post-Lie ring or skew brace. As a consequence, we prove that if $(A,\cdot,\circ)$ is a left nilpotent skew brace and $(A,\cdot)$ is a nilpotent group, then also $(A,\circ)$ is a nilpotent group; see \cref{lem: left nilpotent nilpotent type brace} and \cref{theorem: nilpotency class multiplicative skew brace}. This result was previously proved by Ced\'o, Smoktunowicz and Vendramin for finite skew braces \cite[Theorem 4.8]{CSV19}. The introduction of a new notion of nilpotency for skew braces, coined $L$-nilpotency, is essential in our proof and also provides a natural upper bound on the nilpotency class of $(A,\circ)$ in this case.

In \cref{section: lazard generalized} we first prove that the exponential and logarithmic maps give a correspondence between certain derivations and automorphisms of Lazard Lie rings, mimicking the geometric correspondence given by integration and differentiation. We then prove the final correspondence between Lazard post-Lie rings and Lazard skew braces, its most general formulation is given in \cref{theorem: final correspondence} and the version for finite structures in \cref{theorem: correspondence p}.
A more restrictive version of this correspondence, although arguably the most interesting for classification purposes, is the following.
\begin{theorem}\label{theorem: correspondence finite p}
    Let $p$ be a prime and $k<p$. There is a bijective correspondence between skew braces of size $p^k$ and left nilpotent post-Lie rings of size $p^k$ whose underlying Lie ring is nilpotent. This correspondence restricts to braces of size $p^k$ and left nilpotent pre-Lie rings of size $p^k$.
\end{theorem}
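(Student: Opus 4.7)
The plan is to deduce this result as a direct specialization of the general correspondence established in \cref{theorem: correspondence p}. The core observation is that the size constraint $k<p$ forces every skew brace of order $p^k$, and every left nilpotent post-Lie ring of that order on a nilpotent Lie ring, to automatically sit within the Lazard regime introduced in \cref{section: skew,section: post lie}. So the proof essentially reduces to checking that these Lazard hypotheses are satisfied on both sides.

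For the forward direction, let $(A,\cdot,\circ)$ be a skew brace of order $p^k$. Both $(A,\cdot)$ and $(A,\circ)$ are $p$-groups of order $p^k$, hence nilpotent of class at most $k-1<p$, which places both within the classical Lazard range. Furthermore, $(A,\cdot,\circ)$ is left nilpotent by \cite{CSV19}. Together with the natural filtration on skew braces constructed in \cref{section: skew}, this should exhibit $(A,\cdot,\circ)$ as a Lazard filtered skew brace, and applying \cref{theorem: correspondence p} then produces a post-Lie ring $(\fa,\tr)$ of order $p^k$. Left nilpotency of $\tr$ transfers from left nilpotency of the skew brace operation $*$ through the construction, while nilpotency of $(\fa,[-,-])$ is inherited from that of $(A,\cdot)$ via the classical Lazard correspondence.

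The reverse direction is symmetric. Let $(\fa,\tr)$ be a left nilpotent post-Lie ring with $|\fa|=p^k$ and $(\fa,[-,-])$ nilpotent. Because the lower central series of $(\fa,[-,-])$ has at most $k$ nontrivial factors, its nilpotency class is at most $k-1<p$. Combined with left nilpotency of $\tr$, the natural filtration built in \cref{section: post lie} should make $(\fa,\tr)$ into a Lazard filtered post-Lie ring; the post-Lie counterpart of \cref{theorem: nilpotency class multiplicative skew brace}, which I expect is established alongside it in \cref{section: post lie} via the relation \eqref{eq: a^circ}, guarantees moreover that $\fa^\circ$ is nilpotent of class at most $p-1$. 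Invoking \cref{theorem: correspondence p} now yields a skew brace of order $p^k$. Bijectivity of the two assignments is inherited from \cref{theorem: correspondence p}, and the restriction to braces and pre-Lie rings is automatic: $(A,\cdot)$ is abelian if and only if the corresponding Lazard Lie ring has trivial bracket, which is exactly the defining condition for $(\fa,\tr)$ to be a pre-Lie ring.

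I expect that the hard work does not lie in this deduction but in the preceding sections, which must do the heavy lifting. Specifically, the main obstacles are (i) setting up the natural filtrations on post-Lie rings and skew braces in a way that the Lazard property is preserved under the exponential/logarithmic correspondence of \cref{section: lazard generalized}, and (ii) proving the nilpotency-transfer results in a form sharp enough to guarantee that both $(A,\circ)$ and $\fa^\circ$ have nilpotency class at most $p-1$ whenever the other structure does, so that the entire packet really falls inside the classical Lazard range invoked by \cref{theorem: correspondence p}.
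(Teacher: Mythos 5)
Your proposal is correct and follows essentially the same route as the paper: invoke \cite[Proposition 4.4]{CSV19} for automatic left nilpotency, combine with nilpotency of the underlying $p$-group via \cref{lem: left nilpotent nilpotent type Lie,lem: left nilpotent nilpotent type brace} to get $L$-nilpotency, bound the $L$-nilpotency class by $k<p$, and specialize \cref{theorem: correspondence p}. The only cosmetic difference is that you bound the nilpotency class of the groups $(A,\cdot)$ and $(A,\circ)$, whereas the relevant quantity feeding into \cref{theorem: correspondence p} is the $L$-nilpotency class, which the paper bounds directly by $k$ using that the chain $L^i(A)$ is strictly decreasing in a set of size $p^k$.
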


This correspondence is explicitly described in \cref{prop: group of flows,prop: back to Lie}. In particular, the brace associated to a pre-Lie ring is precisely the one obtained by the group of flows construction as in \cite{Smoktunowicz2022OnRings}. Therefore, \cref{theorem: correspondence finite p} effectively extends Smoktunowicz's correspondence \cite[Theorem 14]{Smoktunowicz2022OnRings} which imposed stronger nilpotency conditions. This provides an affirmative answer to \cite[Question 1]{Smoktunowicz2022OnRings} and \cite[Problem 20.92 b)]{Kourovka2024}.
When \cref{prop: group of flows} is applied to post-Lie algebras over a field of characteristic $0$, one recovers the formal integration by Bai, Guo, Sheng and Tang \cite[Theorem 5.22]{Bai2023Post-groupsEquation} with less restrictive conditions.

In \cite[Section 5.2]{Bai2023Post-groupsEquation} the authors propose the problem of finding a suitable differentiation and formal integration theory for finite skew braces and post-Lie rings. One such approach is precisely the one using the exponential and the logarithmic map as in \cref{section: lazard generalized}. On the other hand, a technique used by Smoktunowicz in \cite{Smoktunowicz2022AAlgebras,Smoktunowicz2022OnRings} could also be seen as an discrete differentiation. In \cref{section: differentiation} we motivate this and we give an application to skew braces that extends a result of Smoktunowicz. 

\section{Preliminaries}\label{section: preliminaries}
We first recall the precise statement of the Lazard correspondence together with some useful related results. Although we use slightly different terminology, all of the statements regarding the Lazard correspondence follow from the treatment in \cite{Khukhro1998,Lazard1954SurLie}.

For a group $G$ and $P$ a set of prime numbers, $G$ is \emph{$P$-divisible} if for every element $g\in G$ and every $n$ whose prime divisors are contained in $P$, there exists a unique $h\in G$ such that $h^n=g$. This unique element is denoted $g^{\frac{1}{n}}$. If $G$ is an abelian group and its group operation is denoted by $+$, then we use the notation $\frac{1}{n}g$.

    A \emph{filtered group} is a group $G$ with a filtration of normal subgroups 
    $$G=G_0\supseteq G_1\supseteq G_2\supseteq ...$$ 
    such that $[G_i,G_j]\subseteq G_{i+j}$. Any subgroup $H$ of a filtered group $G$ has a natural filtration given by $H_i=H\cap G_i$. We endow a filtered group with the topology with base $\{gG_i\mid g\in G, i\geq 1\}$. A homomorphism of filtered groups $f:G\to H$ is a group homomorphism such that $f(G_i)\subseteq H_i$. A filtered group $G$ is \emph{complete} if it is Hausdorff and complete as topological space. A \emph{Lazard group} is a complete filtered group $G$ such that $G_0=G_1$ and $G_i$ is $P_i$-divisible for every $i\geq 1$, where $P_i$ is the set of all primes less than or equal to $i$.
    \begin{example}
        On any group $G$ one obtains a natural filtration by setting $G_0=G_1=G$ and $G_{i+1}=[G,G_{i}]$ for $i\geq 1$. If there exists some $k$ such that $G_{k+1}=\{1\}$ (meaning that $G$ is nilpotent) then the induced topology is discrete hence complete. If we moreover assume that $G$ is a $p$-group for some prime $k<p$, then $G$ is easily seen to be a Lazard group since every $p$-group is $P_i$-divisible for $i<p$. However, note that any $P_p$-divisible $p$-group is necessarily trivial.
    \end{example}
    
Similarly, a \emph{filtered Lie ring} is a Lie ring $\fa$ with a filtration of ideals 
$$\fa=\fa_0\supseteq \fa_1\supseteq \fa_2\supseteq ...$$
such that $[\fa_i,\fa_j]\subseteq \fa_{i+j}$ for all $i,j\geq 1$. A homomorphism of filtered Lie rings $f:\fa\to\fb$ is a Lie ring homomorphism such that $f(\fa_i)\subseteq \fb_i$. We endow a filtered group with the topology with base $\{x+\fa_i\mid x\in \fa,i\geq 1\}$. A filtered Lie ring is \emph{complete} if its underlying additive group (with the same filtration) is complete. A \emph{Lazard Lie ring} is a Lie ring whose underlying additive group is Lazard.
\begin{example}
        On any Lie ring $\fa$ one obtains a natural filtration by setting $\fa_0=\fa_1=\fa$ and $\fa_{i+1}=[\fa,\fa_{i}]$ for $i\geq 1$. If there exists some $k$ such that $\fa_{k+1}=\{1\}$ (meaning that $\fa$ is nilpotent) then the induced topology is discrete hence complete. If $(\fa,+)$ is a $p$-group for $k<p$ then $\fa$ is Lazard.
    \end{example}
The benefit of working in a complete Lie ring $\fa$ is that the Baker-Campbell-Hausdorff (BCH) formula can be evaluated in any two elements. Recall that this formula originates in Lie theory and its first terms are given by
\begin{equation}\label{eq: BCH}
    \BCH(x,y)=x+y+\frac{1}{2}[x,y]+\frac{1}{12}([x,[x,y]]+[y,[y,x]])-\frac{1}{24}[y,[x,[x,y]]]+\dots,
\end{equation}
where higher terms are of order at least 5. The prime divisors of the denominator of coefficients of order $n$ only involve primes $\leq n$, which indeed ensures that these are well-defined in $\fa$. The element $\BCH(x,y)$, for $x,y\in \fa$, is then interpreted as $\lim_{k\to \infty} \BCH_k(x,y)$ where $\BCH_k$ is obtained by truncating the formula at order $k$. Note in particular that if $x,y\in \fa$ commute, then $\BCH(x,y)=x+y$. In particular, $\BCH(x,-x)=0$ and $\BCH(x,x)=2x$.

Similarly, in a Lazard group $A$, one can evaluate formal infinite products of commutators as long as their order goes to infinity and as long as the fractional powers involve only primes $\leq n$ for commutators of order $n$. Two examples of such formulae are the first and second inverse BCH formula (see also \cite{Cicalo2012AnCorrespondence}, \cite{Khukhro1998}), whose first terms are
\begin{equation}\label{eq: inverse BCH 1}
    P(g,h)=gh[g,h]^{-\frac{1}{2}}[g,[g,h]]^{\frac{1}{12}}[g,[g,[g,h]]]^{-\frac{1}{24}}[h,[h,[g,h]]]^{\frac{1}{24}}\cdots,
\end{equation}
    and
\begin{equation}\label{eq: inverse BCH 2}
    Q(g,h)=[g,h][g,[g,h]]^{\frac{1}{2}}[h,[g,h]]^{\frac{1}{2}}[g,[g,[g,h]]]^{\frac{1}{3}}[h,[g,[g,h]]]^{\frac{1}{4}}[h,[h,[g,h]]]^{\frac{1}{3}}\cdots,
\end{equation}
where in this case $[g,h]$ is the group theoretic commutator $ghg^{-1}h^{-1}$ and further factors involve iterated brackets of order at least 5.
\begin{theorem}\label{theorem: Lazard}
    Let $\fa$ be a Lazard Lie ring, then $\Laz(\fa):=(\fa,\BCH)$ is a Lazard group with respect to the same filtration. If $f:\fa\to \fb$ is a homomorphism of Lazard Lie rings, then $f:\Laz(\fa)\to \Laz(\fb)$ is a homomorphism of Lazard groups.

    Conversely, let $A$ be a Lazard group. Then $\Laz^{-1}(A):=(A,+,[-,-])$, where $a+b:=P(a,b)$ and $[a,b]:=Q(a,b)$, is a Lazard Lie ring with respect to the same filtration. This construction is also functorial. $\Laz$ and $\Laz^{-1}$ are inverse constructions.
\end{theorem}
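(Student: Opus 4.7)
The plan is to establish all four assertions by reducing to universal formal identities in a completed free Lie ring and transferring them along continuous specialization maps. All rational coefficients appearing in \eqref{eq: BCH}, \eqref{eq: inverse BCH 1}, and \eqref{eq: inverse BCH 2} at order $n$ have denominators supported on primes $\leq n$, so the $P_n$-divisibility of $\fa_n$ (resp.\ $A_n$) makes these fractions act meaningfully on the degree-$n$ part. Since the degree-$n$ component of $\BCH(x,y)$ is a $\Q$-combination of $n$-fold iterated brackets in $x,y$, it lies in $\fa_n$; completeness then forces $\BCH_k(x,y)$ to converge to an element of $\fa$. A parallel analysis shows that $P(g,h)$ and $Q(g,h)$ converge in any Lazard group $A$. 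In particular $\BCH(x,y)-x-y\in \fa_{i+j}$ when $x\in \fa_i$, $y\in \fa_j$, which gives that $\Laz(\fa)_i:=\fa_i$ is a filtration of $\Laz(\fa)$ by normal subgroups with $[\Laz(\fa)_i,\Laz(\fa)_j]_{\mathrm{gp}}\subseteq \Laz(\fa)_{i+j}$.

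To verify the group axioms for $(\fa,\BCH)$, I would invoke that associativity, $\BCH(0,x)=x$, and $\BCH(x,-x)=0$ are classical formal identities in the Malcev completion $\hat{\mathbf{L}}$ of the free Lie $\Q$-algebra on three generators. Any triple $x,y,z\in \fa$ induces a continuous filtered Lie ring map $\hat{\mathbf{L}}\to \fa$, along which each identity specializes to its counterpart in $\fa$. The $P_i$-divisibility of $\Laz(\fa)_i$ is inherited from $\fa_i$ because the $n$-th group power of $x$ equals $nx$ (by induction from $\BCH(x,x)=2x$). Functoriality of $\Laz$ is immediate: a filtered Lie ring homomorphism is automatically continuous and commutes with each partial sum $\BCH_k$. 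The symmetric argument for $P$ and $Q$, this time encoding bilinearity, antisymmetry, Jacobi, and the bracket--filtration compatibility as formal identities in the completed free Lazard group, gives that $\Laz^{-1}$ is a well-defined functor landing in Lazard Lie rings with the same filtration.

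The main obstacle, and the only genuinely delicate step, is mutual inversion. I would proceed via the Magnus--Hausdorff identification: on the Malcev completion $\hat{\mathbf{L}}$ of the free Lie $\Q$-algebra on two generators $x,y$, the exponential $z\mapsto \exp(z)$ into the grouplike elements of the completed universal enveloping algebra $\hU(\hat{\mathbf{L}})$ is a bijection with $\exp(z)\exp(w)=\exp(\BCH(z,w))$. The formulae $P$ and $Q$ are characterized by recovering addition and the Lie bracket on $\hat{\mathbf{L}}$ from the group structure of $\Laz(\hat{\mathbf{L}})$ together with their leading-order behaviour; this characterization forces $\Laz^{-1}\circ \Laz=\id$ on $\hat{\mathbf{L}}$. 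Specializing through the universal property and invoking continuity then yields $\Laz^{-1}\circ \Laz=\id_\fa$ for every Lazard Lie ring $\fa$; the opposite composition is obtained by the dual Magnus-style embedding of a free Lazard group into the grouplike part of its completed group algebra. Throughout, the delicate bookkeeping is precisely that the $P_n$-divisibility hypothesis makes every denominator of order $n$ invertible in the relevant quotient, while completeness ensures that each formal series actually defines an element of the ambient structure.
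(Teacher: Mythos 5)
The paper does not prove this theorem: it is stated as a recalled classical result, with the proof deferred entirely to Lazard's original paper and Khukhro's book (``all of the statements regarding the Lazard correspondence follow from the treatment in [Khukhro, Lazard]''). So there is no in-paper argument to compare against; your sketch is a reconstruction of the standard literature proof, and in outline it is the right one: convergence of $\BCH$, $P$, $Q$ from the divisibility-versus-denominator bookkeeping plus completeness, group/Lie axioms as specializations of universal identities, and mutual inversion via the Magnus--Hausdorff embedding into the completed enveloping (resp.\ group) algebra.

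One step is stated too loosely to be correct as written: a triple $x,y,z\in\fa$ does \emph{not} induce a Lie ring map from the Malcev completion $\hat{\bL}$ of the free Lie $\Q$-algebra to $\fa$, because $\hat{\bL}$ is uniquely divisible in every degree while a Lazard Lie ring is only required to be $P_i$-divisible in filtration degree $i$ (e.g.\ a nilpotent Lie ring of order $p^k$ with $k<p$ admits no division by $2$ in degree $1$). The standard repair, which is what Lazard and Khukhro actually do, is to verify each identity degree by degree: the degree-$n$ homogeneous component of the associativity (or Jacobi, or inversion) identity is a $\Z[1/q:q\leq n]$-linear combination of $n$-fold brackets, hence specializes into $\fa_n$, where exactly the needed divisibility is available and where uniqueness of roots lets one conclude the component vanishes. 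Your closing remark shows you are aware of this bookkeeping, but the argument should be routed through the graded components (or through the free ``Lazard'' Lie ring over the appropriately graded coefficient rings) rather than through a global map out of $\hat{\bL}$. With that adjustment the sketch matches the cited sources.
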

\begin{corollary}
    Let $\fa$ be a Lazard Lie ring, then the Lazard sub Lie rings of $\fa$ and Lazard subgroups of $\Laz(\fa)$ coincide. Also, Lazard ideals of $\fa$ and Lazard normal subgroups of $\Laz(\fa)$ coincide.
\end{corollary}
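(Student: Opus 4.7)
The plan is to leverage \cref{theorem: Lazard} to shuttle subsets between the two structures using the BCH series and its inverses $P,Q$. The key observation is that each of these infinite expressions has its order-$n$ term given by an iterated bracket (respectively commutator) scaled by a rational whose denominator lies in $P_n$. So any subset that is closed under the algebraic operations of the target structure and complete for its induced filtration will automatically absorb the limits.

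First I would prove the subring/subgroup statement. Let $\fb \subseteq \fa$ be a Lazard sub Lie ring with the induced filtration $\fb_i = \fb \cap \fa_i$ (so $[\fb_i,\fb_j] \subseteq \fb_{i+j}$ automatically). For $x,y \in \fb$, each truncation $\BCH_k(x,y)$ is a finite $\Z$-linear combination of iterated brackets in $x,y$ with rational coefficients; the order-$n$ term lies in $\fb_n$, and its denominator, whose prime divisors are in $P_n$, is invertible there by the Lazard hypothesis on $\fb$. Hence $\BCH_k(x,y)\in\fb$, and completeness of $\fb$ gives $\BCH(x,y)\in\fb$. Since $\BCH(x,-x)=0$ the inverse of $x$ in $\Laz(\fa)$ also lies in $\fb$, so $\fb$ is a subgroup of $\Laz(\fa)$. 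Its filtration as a subgroup is $\fb\cap\Laz(\fa)_i=\fb_i$, so the topology, completeness, and divisibility all match (for divisibility one uses $\BCH(h,\dots,h)=nh$, since $h$ commutes with itself), and $\fb$ is a Lazard subgroup. The converse direction is verbatim with $P$ and $Q$ in place of $\BCH$: starting from a Lazard subgroup $B$ of $\Laz(\fa)$, the order-$n$ factor of $P(g,h)$ or $Q(g,h)$ is an iterated commutator of $g,h\in B$ with a $P_n$-admissible fractional exponent, hence lies in $B$; completeness passes $g+h=P(g,h)$ and $[g,h]=Q(g,h)$ into $B$.

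For the ideal/normal-subgroup claim I would run the same completion argument on the conjugation formula. In a complete filtered Lie ring, a standard term-by-term identity gives
\[
\BCH(g,b,-g)=\sum_{n\geq 0}\tfrac{1}{n!}(\ad g)^n(b),
\]
whose $n$-th term lies in $\fa_{n+1}$ (so the sum converges). If $\fb$ is an ideal and a Lazard sub Lie ring, then $(\ad g)^n(b)\in\fb_{n+1}$ and the coefficients $1/n!$ are admissible at level $n+1$, so each partial sum is in $\fb$ and its limit $\BCH(g,b,-g)=gbg^{-1}$ is in $\fb$; hence $\fb$ is normal in $\Laz(\fa)$. Conversely, if $B$ is a normal Lazard subgroup, each iterated group commutator of the form $[g,[g,\dots,[g,b]\dots]]_{\mathrm{grp}}$ lies inductively in $B$ (normality gives $gXg^{-1}\in B$ whenever $X\in B$, and the subgroup axioms close off the commutator), so every finite factor of $Q(g,b)$ lies in $B$ and completeness yields $[g,b]=Q(g,b)\in B$. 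Hence $B$ is an ideal.

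I do not expect a deep obstacle; the work is bookkeeping to make sure that the filtration degree of each term in BCH, $P$, $Q$, and the $\ad$-exponential matches the Lazard divisibility available on the candidate subring or subgroup. The one non-mechanical ingredient is the identity $\BCH(g,b,-g)=\sum \tfrac{1}{n!}(\ad g)^n(b)$ in the complete filtered Lie ring, which is needed to reduce the normality condition to the ideal condition and vice versa; this is a standard termwise manipulation of the BCH series but is the only point where one must do something beyond invoke \cref{theorem: Lazard} and complete partial sums.
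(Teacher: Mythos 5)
Your argument is correct. The paper itself offers no proof of this corollary: it is stated as an immediate consequence of \cref{theorem: Lazard}, with the blanket remark in \cref{section: preliminaries} that all statements about the Lazard correspondence follow from Khukhro and Lazard. What you have written is the standard verification that those references carry out: a candidate substructure closed under the target operations and complete for the induced filtration absorbs each convergent series $\BCH$, $P$, $Q$ term by term, because the order-$n$ term is an iterated bracket (or commutator) landing in the $n$-th filtration step, where the $P_n$-divisibility guaranteed by the Lazard hypothesis makes the rational coefficient (or fractional exponent) meaningful and, by uniqueness of roots in the ambient object, consistent with the division performed in $\fa$. Two small remarks. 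First, the conjugation identity you single out as the one nontrivial ingredient, $\BCH(a,\BCH(b,-a))=\exp(\ad_a)(b)$, is already recorded in the paper as \cref{lem: conjugation BCH}, so you may simply cite it rather than re-derive it. Second, in the converse direction for ideals your factors are not all of the left-normed shape $[g,[g,\dots,[g,b]\dots]]$; what you actually need, and what your inductive remark does establish, is that any iterated group commutator with at least one entry in a normal subgroup $B$ lies in $B\cap G_n=B_n$ for its weight $n$ (every nontrivial factor of $Q(g,b)$ involves $b$ at least once, since commutators in $g$ alone vanish). With those cosmetic points addressed, the proof is complete.
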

A \emph{filtered ring} $R$ is a ring together with a descending filtration of ideals 
$$A_0\supseteq A_1\supseteq A_2\supseteq \dots$$
such that $A_iA_j\subseteq A_{i+j}$. A filtered ring $R$ is \emph{Lazard} if its underlying additive group is Lazard. If on a Lazard ring we consider the commutator bracket, then we obtain a Lazard Lie ring. Note that a Lazard ring is necessarily non-unital, but one can adjoin a unit to obtain the Dorroh extension $R'=\Z\oplus R$ with filtration $R'_0=R'$, $R'_i=\{0\}\oplus R_i$ for $i\geq 1$. Then the exponential $$\exp(a)=\sum_{k=0}^\infty \frac{1}{k!}a^k=\lim_{n\to \infty}\sum_{k=0}^n\frac{1}{k!}a^k\in 1+R$$ is well-defined for $a\in R$. Similarly, for any $a\in 1+R$ the logarithm $$\log(a)=\sum_{k=1}^\infty (-1)^{k+1}\frac{1}{k} (a-1)=\lim_{n\to \infty}\sum_{k=1}^n (-1)^{k+1}\frac{1}{k} (a-1)\in R$$ 
is a well-defined element and $\exp$ and $\log$ are mutually inverse. Then the Baker-Campbell-Hausdorff formula is obtained as $\BCH(a,b)=\log(\exp(a)\exp(b))$. Similarly, $P(g,h)=\exp(\log(g)+\log(h))$ and $Q(g,h)=\exp(\log(g)\log(h)-\log(h)\log(g))$ where we evaluate $P$ and $Q$ in the Lazard group $1+R$. Explicitly we have the following result.
\begin{proposition}\label{prop: Lazard in ring}
    Let $\fa$ be a Lazard Lie ring embedding in a Lazard ring $R$. Then $\exp(\fa)$ is a Lazard group for the filtration $\exp(\fa)_i=\exp(\fa_i)$ and $\exp:\Laz(\fa)\to \exp(\fa)$ is an isomorphism of Lazard groups.
\end{proposition}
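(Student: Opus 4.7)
The plan is to reduce everything to the classical identity
$$\exp(a)\exp(b)=\exp(\BCH(a,b)),$$
valid for all $a,b\in R$; indeed this is essentially built into the setup, since the text has just defined $\BCH(a,b)=\log(\exp(a)\exp(b))$ in $R$, and the BCH series for $\fa$ evaluated via iterated Lie brackets agrees with this expression (both definitions being limits of the same truncations in the ambient complete ring). Because $\fa\subseteq R$ is a complete Lazard sub Lie ring, $\BCH(a,b)$ lies in $\fa$ for all $a,b\in\fa$, so $\exp(\fa)$ is closed under multiplication in $1+R$. Together with $\exp(a)^{-1}=\exp(-a)$, this shows $\exp(\fa)$ is a subgroup of $1+R$. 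The displayed identity then says exactly that $\exp\colon\Laz(\fa)=(\fa,\BCH)\to(\exp(\fa),\cdot)$ is a group homomorphism, and it is bijective with inverse $\log$, hence an isomorphism of abstract groups.

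Next I would transport the Lazard group structure of $\Laz(\fa)$ across this isomorphism. By \cref{theorem: Lazard}, $\Laz(\fa)$ is a Lazard group for the filtration $\{\fa_i\}$: the $\fa_i$ are normal in $\Laz(\fa)$, they satisfy $[\fa_i,\fa_j]_{\Laz}\subseteq\fa_{i+j}$ for all $i,j\geq 1$, $\fa_0=\fa_1$, and each $\fa_i$ is $P_i$-divisible. Since $\exp$ is a group isomorphism, all of these properties carry over verbatim to the subgroups $\exp(\fa_i)\subseteq\exp(\fa)$: normality, the commutator inclusion $[\exp(\fa_i),\exp(\fa_j)]\subseteq\exp(\fa_{i+j})$, the equality $\exp(\fa)_0=\exp(\fa)_1$, and $P_i$-divisibility.

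It remains to verify the topological conditions. Under the bijection $\exp$ the neighborhood basis $\{a+\fa_i\mid a\in\fa,\,i\geq 1\}$ of the filtration topology on $\Laz(\fa)$ corresponds to the basis $\{\exp(a)\exp(\fa_i)\mid a\in\fa,\,i\geq 1\}$ of the filtration topology on $\exp(\fa)$, so $\exp$ is a homeomorphism, and $\exp(\fa)$ inherits Hausdorffness and completeness from $\Laz(\fa)$. This establishes both that $\exp(\fa)$ is a Lazard group with the stated filtration and that $\exp\colon\Laz(\fa)\to\exp(\fa)$ is an isomorphism of Lazard groups. Essentially the whole argument is formal once the identity $\exp(a)\exp(b)=\exp(\BCH(a,b))$ is in place; the only mildly delicate point is confirming that the intrinsic BCH series defined on $\fa$ by iterated Lie brackets really coincides with $\log(\exp(a)\exp(b))$ computed in the ambient ring $R$, and that all intermediate limits stay within the sub Lie ring $\fa$ rather than merely converging in $R$.
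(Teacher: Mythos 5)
Your argument is correct. Note that the paper itself gives no proof of this proposition: it is stated in the preliminaries as following from the standard treatment of the Lazard correspondence in Khukhro and Lazard, and your proof is precisely the expected one, transporting the Lazard group structure of $\Laz(\fa)$ along the bijection $\exp$ via the identity $\exp(a)\exp(b)=\exp(\BCH(a,b))$. The one point you flag as delicate — that the intrinsic BCH series on $\fa$ agrees with $\log(\exp(a)\exp(b))$ computed in $R$ — is settled by observing that the embedding is one of filtered Lie rings (so $\fa_i\subseteq R_i$ and the bracket of $\fa$ is the restriction of the commutator bracket of $R$), and that uniqueness of $n$-th divisors in $R_i$ forces the fractional coefficients evaluated in $\fa_i$ and in $R_i$ to coincide, after which completeness of $\fa$ keeps all limits inside $\fa$.
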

Let $\fa$ be a filtered Lie ring. We define $\End(\fa)$ as the ring of endomorphisms of the underlying additive group $(\fa,+)$. We have a natural filtration on $\End(\fa)$ given by 
    $$\End(\fa)_j=\{f\mid f(\fa_i)\subseteq \fa_{i+j}\},$$
    which makes $\End(\fa)$ into a filtered ring. If $\fa$ is complete, then also $\End(\fa)$ is easily seen to be complete. Let $\End(\fa)^+$ denote the filtered subring of $\End(\fa)$ consisting of all elements in $\End(\fa)_1$. If $\fa$ is Lazard, then it follows directly that $\End(\fa)^+$ is Lazard. 
    \begin{remark}
        Note that although the difference between $\End(\fa)^+$ and $\End(\fa)_1$ is subtle, we consider $\End(\fa)_1$ simply as an ideal of $\End(\fa)$ while $\End(\fa)^+$ is seen as a filtered ring on its own. Throughout this paper we use a similar notation for any filtered ring, group or Lie ring to indicate the filtered substructure consisting of all elements contained in the second term in the filtration.
    \end{remark}
    For $\fa$ a filtered Lie ring and $a\in \fa$, we denote by $\ad_a$ the adjoint map $b\mapsto [a,b]$. Clearly, $\ad:\fa\to \End(\fa):a\mapsto \ad_a$ is a homomorphism of filtered Lie rings, where on $\End(\fa)$ we consider the commutator bracket. 
\begin{lemma}\label{lem: conjugation BCH}
    Let $\fa$ be a Lazard Lie ring, then $\BCH(a,\BCH(b,-a))=\exp(\ad_a)(b)$ for all $a,b\in \fg$.
\end{lemma}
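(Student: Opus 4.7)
The plan is to translate the classical associative-algebra formula $\exp(x)\,y\,\exp(-x) = \exp(\ad_x)(y)$ into the Lazard setting, using \cref{prop: Lazard in ring} as the dictionary between the Lie-ring operation $\BCH$ and multiplication in $1+R$ for an ambient Lazard ring $R$.

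More precisely, I would embed $\fa$ as a Lazard Lie subring of a Lazard associative ring $R$ in which $[x,y]=xy-yx$, for instance by taking $R$ to be an appropriate filtered completion of the universal enveloping algebra of $\fa$ (using a filtered Poincar\'e--Birkhoff--Witt theorem to ensure $\fa$ embeds and that the Lazard condition passes to $R$). By \cref{prop: Lazard in ring}, inside $R$ one has $\BCH(x,y)=\log(\exp(x)\exp(y))$, and the desired identity becomes a direct computation:
\[
  \BCH(a,\BCH(b,-a))
  = \log\bigl(\exp(a)\exp(b)\exp(-a)\bigr)
  = \log\bigl(\exp(\exp(a)\,b\,\exp(-a))\bigr)
  = \exp(a)\,b\,\exp(-a).
\]
The middle equality uses $\exp(x)\,y^n\,\exp(-x) = (\exp(x)\,y\,\exp(-x))^n$ (so conjugation commutes term-by-term with the power series $\exp$), and the last uses $\log\circ\exp = \id$. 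An elementary induction gives $\ad_a^n(b)=\sum_{i+j=n}\binom{n}{i}a^i b (-a)^j$; summing with weights $\tfrac{1}{n!}$ then yields $\exp(a)\,b\,\exp(-a)=\exp(\ad_a)(b)$, as required.

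The main obstacle is the first step: producing $\fa$ as a Lie subring of a Lazard associative ring with compatible filtration. A filtered PBW-type construction suffices but requires care with completions. A cleaner alternative would be to verify the identity first as a formal Lie-series identity in two variables inside the completed free associative $\Q$-algebra on two generators (where the ring calculation above is unambiguous), and then specialize via the continuous Lie ring homomorphism from the completed free Lie ring on two generators to $\fa$ sending the generators to $a$ and $b$; since both sides of the claimed identity are Lie series, they transport along this homomorphism and yield the identity in $\fa$.
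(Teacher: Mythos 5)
The paper gives no proof of this lemma at all: it sits in the preliminaries under the blanket remark that all statements about the Lazard correspondence follow from Khukhro and Lazard, so there is no in-paper argument to compare against. Your proof, in its second formulation, is essentially the standard argument from that literature and is correct: verify the identity in the completed free associative $\Q$-algebra on two generators, where $\BCH(a,\BCH(b,-a))=\log(\exp(a)\exp(b)\exp(-a))=\exp(a)\,b\,\exp(-a)=\sum_{n\ge 0}\frac{1}{n!}\ad_a^n(b)$ is an honest computation; observe that both sides are Lie series whose degree-$n$ homogeneous components are $\Z[1/p:p\le n]$-linear combinations of Lie monomials (for the left side this is the stated property of the BCH coefficients, which the paper records after \eqref{eq: BCH}; for the right side it is immediate since $\frac{1}{n!}\ad_a^n(b)$ has degree $n+1$ and $n!$ involves only primes $\le n$); and then evaluate in $\fa$, which is legitimate precisely because $\fa_n$ is $P_n$-divisible and $\fa$ is complete, so each homogeneous component and the resulting convergent sum make sense and the identity transports along the evaluation map.

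You should, however, drop the first route through the universal enveloping algebra of $\fa$ itself. Poincar\'e--Birkhoff--Witt over $\Z$ fails for general Lie rings: a Lie ring whose additive group has torsion need not embed in $U(\fa)$, so the very first step of that construction can break; and even when $\fa$ does embed, there is no reason a completion of $U(\fa)$ inherits the divisibility needed to be a Lazard ring, so \cref{prop: Lazard in ring} would not apply to it. Since you already identified this as the weak point and supplied the free-algebra specialization as the fix, the proof should stand on that second leg alone; present it as the actual argument rather than as an alternative.
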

At last, we fix some notation and recall some important substructures of post-Lie rings and skew braces.

Let $(\fa,\tr)$ be a post-Lie ring and $a\in \fa$. We obtain a new Lie bracket on the abelian group $\fa$ by defining $\{a,b\}:=[a,b]+a\tr b-b\tr a$. This Lie ring is denoted by $\fa^\circ$. We will denote left multiplication by $a$ as $\cL_a$, so $\cL_a(b)=a\tr b$. Note that \eqref{eq: postLie 2} states precisely that $\fa^\circ\to \End(\fa):a\mapsto \cL_a$ is a Lie ring homomorphism.
\begin{definition}
    Let $(\fa,\tr)$ be a post-Lie ring. A sub Lie ring $I$ of $(\mathfrak{a},\tr)$ is 
    \begin{itemize}
        \item a \emph{sub post-Lie ring} if $I$ is closed under the operation $\tr$,
        \item a \emph{left ideal} if $x\tr y\in I$ for all $x\in \fa$, $y\in I$,
        \item a \emph{strong left ideal} if it is a left ideal and it is an ideal of $\fa$,
        \item an \emph{ideal} if it is a strong left ideal and moreover $I$ is an ideal of $\fa^\circ$.
    \end{itemize}
\end{definition}
\begin{example}
    Let $(\fa,\tr)$ be a post-Lie ring, then the \emph{fix} $$\Fix(\fa):=\{a\in \fa\mid b\tr a=0 \text{ for all }b\in \fa\},$$ is a left ideal. The \emph{socle} 
    $$\Soc(\fa):=\{a\in \fa\mid a\tr b=[a,b]=0 \text{ for all }b\in \fa\},$$
    and \emph{annihilator}
    $$~~\Ann(\fa):=\{a\in \fa\mid a\tr b=b\tr a =[a,b]=0\text{ for all $b\in \fa$}\},$$
    are ideals. For the latter statement, note that for $a\in \Soc(\fa)$ and $b,c\in \fa$, it follows from \eqref{eq: postLie 1} that $0=b\tr [a,c]=[b\tr a,c]$ and from \eqref{eq: postLie 2} that $0=[a,b]\tr c=-(b\tr a)\tr c$ hence $b\tr a\in \Soc(\fa)$. As $\{b,a\}=b\tr a$, $\Soc(\fa)$ and $\Ann(\fa)$ are ideals.
\end{example}
Let $(A,\cdot,\circ)$ be a skew brace. Then we define $\lambda_a(b):=a^{-1}\cdot (a\circ b)$ and in this way we obtain an action of $(A,\circ)$ on $(A,\cdot)$ by automorphisms 
$$\lambda:(A,\circ)\to \Aut(A,\cdot):a\mapsto \lambda_a.$$
Note that the operation $a*b:=a^{-1}\cdot (a\circ b)\cdot b^{-1}$ can also be expressed as $\lambda_a(b)\cdot b^{-1}$.
\begin{definition}
    Let $(A,\cdot,\circ)$ be a skew brace. A subgroup $I$ of $(A,\cdot)$ is 
    \begin{itemize}  
        \item a \emph{sub skew brace} if $I$ is a subgroup of $(A,\circ)$,
        \item a \emph{left ideal} if $\lambda_a(b)\in I$ for all $a\in A$, $b\in I$,
        \item a \emph{strong left ideal} if $I$ is a left ideal and also a normal subgroup of $(A,\cdot)$,
        \item an \emph{ideal} if $I$ is a strong left ideal and also a normal subgroup of $(A,\circ)$.
    \end{itemize}
\end{definition}
\begin{example}
    Let $(A,\cdot,\circ)$ be a skew brace, then the \emph{fix} $$\Fix(A):=\{a\in A\mid b\circ a=b+a\text{ for all $b\in A$}\},$$
    is a left ideal of $A$. The \emph{socle}
    $$\Soc(A):=\{a\in A\mid a\circ b=a+b=b+a\text{ for all $b\in A$}\},$$
    and the \emph{annihilator }
    $$\Ann(A):=\{a\in A\mid b\circ a=a\circ b=a+b=b+a\text{ for all $b\in A$}\},$$
    are ideals of $A$.
\end{example}

\section{Filtered post-Lie rings}\label{section: post lie}
\begin{definition}
    A \emph{filtered post-Lie ring} is a post-Lie ring $(\fa,\tr)$ together with a filtration $\fa_i$, $i\geq 0$, of the Lie ring $\fa$ where each $\fa_i$ is a strong left ideal.
\end{definition}
    
   Let $\fa$ be a filtered Lie ring. We define $\Der(\fa)$ as the set of all derivations of $\fa$ contained in $\End(\fa)$, these form a sub Lie ring of $\End(\fa)$ under the usual commutator Lie bracket. Thus they have a natural filtration $\Der(\fa)_i=\End(\fa)_i\cap \Der(\fa)$.
Let $\delta:\fg\to \Der(\fa)$ be a homomorphism of filtered Lie rings. We define the semi-direct sum $\fa\oplus_\delta \fg$ as the direct sum of the additive groups together with the bracket
$$[(a,x),(b,y)]=([a,b]+\delta_x(b)-\delta_y(a),[x,y]).$$
It is well-known that this makes $\fa\oplus_\delta \fg$ into a Lie ring. The proof of the following statement is straightforward; the fact that this is indeed a filtration follows directly from the Jacobi identity.
\begin{lemma}
    Let $\fa$, $\fg$ be filtered Lie rings and $\delta:\fg\to \Der(\fa)$ a filtered Lie ring homomorphism. Then $\fa\oplus_\delta \fg$ is a filtered Lie ring for the filtration $(\fa\oplus_\delta \fg)_i=\fa_i\oplus_\delta \fg_i$. If moreover $\fa$ and $\fg$ are Lazard rings then $\fa\oplus_\delta \fg$ is a Lazard ring.
\end{lemma}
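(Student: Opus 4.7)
The plan is to verify the filtration and Lazard conditions directly; no essentially new idea is needed, but the key input must be used in the right place.

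First I would check that each $\fa_i\oplus_\delta \fg_i$ is an ideal of $\fa\oplus_\delta\fg$ and that $[(\fa\oplus_\delta\fg)_i,(\fa\oplus_\delta\fg)_j]\subseteq (\fa\oplus_\delta\fg)_{i+j}$. Both reduce to inspecting the four summands of
\[
[(a,x),(b,y)]=([a,b]+\delta_x(b)-\delta_y(a),\,[x,y]).
\]
The brackets $[a,b]$ and $[x,y]$ land in the right filtration piece because $\fa$ and $\fg$ are filtered Lie rings. The crossed derivation terms $\delta_x(b)$ and $\delta_y(a)$ are the one place where the hypothesis on $\delta$ enters: since $\delta:\fg\to\Der(\fa)$ is a filtered Lie ring homomorphism, $\delta(\fg_i)\subseteq\Der(\fa)_i$, which by the definition $\Der(\fa)_i=\End(\fa)_i\cap\Der(\fa)$ means precisely that $\delta_x(\fa_j)\subseteq \fa_{i+j}$ whenever $x\in \fg_i$. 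Specializing to $i=0$ on one side (respectively $j=0$) gives the ideal property; the general case gives the filtration condition.

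Second, for the Lazard statement I would observe that the underlying filtered abelian group of $\fa\oplus_\delta\fg$ is literally the direct sum $(\fa,+)\oplus(\fg,+)$ with the product filtration, since the twist $\delta$ only affects the bracket. Completeness and the Hausdorff property $\bigcap_i(\fa_i\oplus \fg_i)=0$ therefore follow coordinatewise from the Lazard hypothesis on $\fa$ and $\fg$, as does the equality $(\fa\oplus_\delta\fg)_0=(\fa\oplus_\delta\fg)_1$. For $P_i$-divisibility, an equation $n\cdot(b,y)=(a,x)$ with the prime divisors of $n$ lying in $P_i$ decouples into $nb=a$ in $\fa_i$ and $ny=x$ in $\fg_i$, so existence and uniqueness of $(b,y)\in \fa_i\oplus\fg_i$ follow from the corresponding properties on each factor.

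I do not anticipate any genuine obstacle: the only subtlety is interpreting the filtered Lie homomorphism hypothesis on $\delta$ correctly, i.e.\ remembering that it forces each $\delta_x$ with $x\in\fg_i$ to shift the $\fa$-filtration by $i$, which is exactly what is needed to place the mixed term $\delta_x(b)-\delta_y(a)$ in $\fa_{i+j}$.
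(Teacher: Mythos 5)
Your proposal is correct; the paper omits the proof entirely (declaring it straightforward), and your direct verification — placing the mixed terms $\delta_x(b),\delta_y(a)$ in $\fa_{i+j}$ via $\delta(\fg_i)\subseteq\Der(\fa)_i=\End(\fa)_i\cap\Der(\fa)$, and checking the Lazard conditions coordinatewise on the underlying direct sum of additive groups — is exactly the intended routine argument. The only minor point of divergence is that the paper's accompanying remark credits the filtration property to the Jacobi identity, whereas you correctly identify the filtered-homomorphism hypothesis on $\delta$ as the input that actually does the work for the crossed terms.
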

In particular, we will be interested in the semidirect sum $\aff(\fa):=\fa\oplus_\delta \Der(\fa)$ where $\delta=\id_{\Der(\fa)}$ and its sub Lie ring $\aff(\fa)^+=\fa^+\oplus \Der(\fa)^+$. If $\fa$ is Lazard, then so is $\Der(\fa)^+$ since it is closed in $\End(\fa)$, and therefore also $\aff(\fa)^+$ is Lazard.

\begin{definition}
    Let $\fa$ be a Lie ring, a sub Lie ring $\fg$ of $\mathfrak{aff
}(\fa)$ is \emph{$t$-surjective}, respectively \emph{$t$-bijective} if the projection $\pr_\fa:\aff(\fa)\to \fa$ to $\pr_\fa:\fg\to \fa$ restricted to $\fg$ yields a surjective, respectively bijective, map from $\fg$ to $\fa$.
\end{definition}

\begin{proposition}\label{prop: correspondence postLie subLie}
    Let $\fa$ be a filtered Lie ring. Then there is a bijective correspondence between operations $\tr:\fa\times \fa\to \fa$ making $(\mathfrak{a},\tr)$ into a filtered post-Lie ring and $t$-bijective sub Lie rings of $\mathfrak{aff}(\fa)$. 
\end{proposition}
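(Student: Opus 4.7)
The plan is to establish the bijection via the ``graph of the left-multiplication'' construction. Given a filtered post-Lie ring $(\fa,\tr)$, I would send it to
$$\fg_\tr := \{(a,\cL_a) \mid a \in \fa\} \subseteq \aff(\fa),$$
and conversely, given a $t$-bijective sub Lie ring $\fg \subseteq \aff(\fa)$, let $\phi : \fa \to \Der(\fa)$ be the unique map such that $(a,\phi(a)) \in \fg$ (well-defined by $t$-bijectivity) and set $a \tr b := \phi(a)(b)$. Mutual inverseness of these assignments is immediate by inspection, so the real content is showing each one produces an admissible output.

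For the forward direction I would check three things. First, each $\cL_a$ lies in $\Der(\fa)$: it is a derivation of $[-,-]$ by axiom~\eqref{eq: postLie 1}, and the strong-left-ideal hypothesis $\fa \tr \fa_i \subseteq \fa_i$ says that $\cL_a$ is filtration-preserving, so that $(a,\cL_a) \in \aff(\fa)$ is valid. Second, biadditivity of $\tr$ gives $\cL_{a+b} = \cL_a + \cL_b$, so $\fg_\tr$ is additively closed. Third, and crucially, one directly expands the bracket in the semi-direct sum:
$$[(a,\cL_a),(b,\cL_b)] = \bigl([a,b] + a \tr b - b \tr a,\ [\cL_a,\cL_b]\bigr) = \bigl(\{a,b\},\ [\cL_a,\cL_b]\bigr),$$
and this element lies in $\fg_\tr$ exactly when $[\cL_a,\cL_b] = \cL_{\{a,b\}}$, which is a direct reformulation of axiom~\eqref{eq: postLie 2}. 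Together with obvious $t$-bijectivity of $\pr_\fa|_{\fg_\tr}$, this proves $\fg_\tr$ is a $t$-bijective sub Lie ring of $\aff(\fa)$.

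The backward direction is essentially the reverse of the above. Additive closure of $\fg$ combined with the uniqueness clause in $t$-bijectivity forces $\phi(a+b) = \phi(a) + \phi(b)$, hence $\tr$ is biadditive; axiom~\eqref{eq: postLie 1} follows from $\phi(a) \in \Der(\fa)$; the strong-left-ideal property follows from $\phi(a)$ being filtration-preserving. For axiom~\eqref{eq: postLie 2}, the bracket $[(a,\phi(a)),(b,\phi(b))]$ must again belong to $\fg$, so by $t$-bijectivity its second coordinate must be $\phi$ of its first, yielding $[\phi(a),\phi(b)] = \phi(\{a,b\})$, which unpacks into axiom~\eqref{eq: postLie 2}. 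The main conceptual point, which guides the whole argument, is the recognition that axiom~\eqref{eq: postLie 2} -- a priori a quartic identity in $\tr$ -- is nothing other than the statement that $\cL$ is a Lie ring homomorphism $\fa^\circ \to \Der(\fa)$, which is exactly what is needed for the graph of $\cL$ to be closed under the semi-direct-sum bracket of $\aff(\fa)$. Once this reformulation is isolated, every remaining verification is routine bookkeeping.
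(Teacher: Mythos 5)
Your proof is correct and takes essentially the same route as the paper, which likewise passes to the graph $\{(a,\cL_a)\mid a\in\fa\}$ and inverts via $t$-bijectivity; the paper only sketches the verifications (deferring to the cited reference), while you spell them out, and your key observation that axiom \eqref{eq: postLie 2} is exactly $\cL_{\{a,b\}}=[\cL_a,\cL_b]$ is the same reformulation the paper records in its preliminaries. (One cosmetic slip: that identity is quadratic, not quartic, in $\tr$.)
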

\begin{proof}
    This proof is a slight variation of the proof of \cite[Proposition 2.12]{Burde2012AffineStructures}. We therefore only give a sketch of the correspondence. 
    
    If $(\mathfrak{a},\tr)$ is a filtered post-Lie ring then $\{(a,\cL_a)\mid a\in \fa\}$ is its associated $t$-bijective sub Lie ring of $\aff(\fa)$, where $\cL_a(b)=a\tr b$. Since the filtration on $\fa$ consists of left ideals we find indeed that the maps $\cL_a$ are contained in $\End(\fa)$.

    Conversely, if we are given a $t$-bijective sub Lie ring $\fg$ of $\aff(\fa)$ then we define $a\tr b$ as $\cL_a(b)$ where $\cL_a$ is the unique endomorphism of $\fa$ satisfying $(a,\cL_a)\in \fg$. In this way $(\fa,\tr)$ is a filtered post-Lie ring. Since the maps $\cL_a$ respect the filtration on $\fa$ we find that $\fa_i$ is a left ideal for each $i\geq 0$.
\end{proof}
 Recall that for every post-Lie ring $(\fa,\tr)$ we can define the Lie ring $\fa^\circ$ with the same underlying additive structure as $\fa$ and Lie bracket given by \eqref{eq: a^circ}.
\begin{corollary}\label{cor: adjoint lie structure}
    Let $(\fa,\tr)$ be a filtered post-Lie ring. Then $\fa^\circ$ is a filtered Lie ring for the filtration $\fa^\circ_i=\{a\in \fa_i\mid 
    \mathcal{L}_a\in \Der(\fa)_i\}$.
\end{corollary}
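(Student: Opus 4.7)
The approach I have in mind is to obtain the claimed filtration on $\fa^\circ$ by realizing it as a sub Lie ring of the filtered Lie ring $\aff(\fa)$ and taking the induced filtration. By \cref{prop: correspondence postLie subLie} the map $\iota\colon a\mapsto (a,\cL_a)$ identifies the underlying set of $\fa^\circ$ with the $t$-bijective sub Lie ring $\fg\subseteq \aff(\fa)$ associated to $(\fa,\tr)$. The first step would be to verify that $\iota$ is actually an \emph{isomorphism} from $\fa^\circ$ (with the bracket $\{-,-\}$) onto $\fg$ (with the bracket inherited from $\aff(\fa)$). In the first coordinate this is exactly the definition of $\{a,b\}=[a,b]+\cL_a(b)-\cL_b(a)$. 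In the second coordinate one needs the identity $\cL_{\{a,b\}}=[\cL_a,\cL_b]$; after expanding $\cL_{\{a,b\}}=\cL_{[a,b]}+\cL_{a\tr b}-\cL_{b\tr a}$ by biadditivity of $\tr$, this is exactly a rearrangement of axiom \eqref{eq: postLie 2}.

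With $\iota$ established as an isomorphism, I would equip $\aff(\fa)$ with its canonical filtration $\aff(\fa)_i=\fa_i\oplus \Der(\fa)_i$, which makes it a filtered Lie ring by the lemma preceding \cref{prop: correspondence postLie subLie}. As a sub Lie ring, $\fg$ then inherits the filtration $\fg\cap \aff(\fa)_i=\{(a,\cL_a)\mid a\in \fa_i,\ \cL_a\in\Der(\fa)_i\}$, and pulling back along $\iota$ gives precisely $\fa^\circ_i=\{a\in \fa_i\mid \cL_a\in \Der(\fa)_i\}$, exactly as in the statement. The ideal property within $\fa^\circ$ is then automatic once one observes that $\fa^\circ_0=\fa^\circ$; this reduces to noting that $\cL_a$ is automatically a derivation of $[-,-]$ by \eqref{eq: postLie 1} and automatically lies in $\End(\fa)_0=\End(\fa)$, so every $a\in \fa$ satisfies $\cL_a\in \Der(\fa)_0$.

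The only step that genuinely requires computation is the endomorphism identity $\cL_{\{a,b\}}=[\cL_a,\cL_b]$, and even this is a one-line reorganization of \eqref{eq: postLie 2}. The rest of the argument is the standard observation that filtered sub Lie rings of filtered Lie rings inherit the filtration, so I do not expect any real obstacle here—the substance of the corollary has already been packaged into \cref{prop: correspondence postLie subLie} and the lemma on filtered semi-direct sums.
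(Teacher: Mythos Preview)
Your approach is essentially identical to the paper's: realize $\fa^\circ$ as the $t$-bijective sub Lie ring $\fg\subseteq\aff(\fa)$ via $a\mapsto(a,\cL_a)$ (the paper phrases this as $\pr_\fa\colon\fg\to\fa^\circ$ being a Lie ring isomorphism) and then pull back the induced filtration $\fg\cap\aff(\fa)_i$. One small correction: it is not true that $\End(\fa)_0=\End(\fa)$ in general, since $\End(\fa)_0$ consists only of the filtration-preserving additive endomorphisms; the correct reason that $\cL_a\in\End(\fa)_0$ for every $a$ is that each $\fa_i$ is by hypothesis a \emph{left ideal} of the filtered post-Lie ring, so $\cL_a(\fa_i)=a\tr\fa_i\subseteq\fa_i$.
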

\begin{proof}
     Let $\fg$ be the associated $t$-bijective sub Lie ring of $\aff(\fa)$ as in \cref{prop: correspondence postLie subLie}. Then it is easily verified that the bijection $\pr_\fa:\fg\to \fa^\circ$ is a Lie ring homomorphism. By transferring the filtration from $\fg$ onto $\fa^\circ$ the statement follows.
\end{proof}
\begin{definition}
    Let $(\fa,\tr)$ be a filtered post-Lie ring, then $(\fa,\tr)$ is \emph{Lazard} if both $\fa$ and $\fa^\circ$ are Lazard Lie rings.
\end{definition}
We now construct a natural minimal filtration on a post-Lie ring  $(\fa,\tr)$. Inductively define $L^1(\fa)$ as $\fa$ and $L^{i+1}(\fa)$ as the subgroup of $(\fa,+)$ generated by all elements of the form $a\tr b$ and $[a,b]$ where $a\in A$, $b\in L^{i}(A)$. One easily verifies that $L^i(\fa)$ gives a descending filtration of strong left ideals $(\fa,\tr)$. 
\begin{lemma}\label{lem: canonical filtration post-Lie}
    Let $(\fa,\tr)$ be a post-Lie ring, then setting $\fa_0=\fa$ and $\fa_i=L^i(\fa)$ for $i\geq 1$ makes $(\fa_i,\tr)$ into a filtered post-Lie ring.
\end{lemma}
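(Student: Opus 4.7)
The plan is to verify three points directly from the recursive definition of $L^i(\fa)$: (i) $L^{i+1}(\fa) \subseteq L^i(\fa)$, so we really do have a descending filtration; (ii) each $L^i(\fa)$ is a strong left ideal of $(\fa,\tr)$; and (iii) $[L^i(\fa), L^j(\fa)] \subseteq L^{i+j}(\fa)$ for all $i, j \geq 1$. The $\fa_0 = \fa$ slot costs nothing, and the bracket condition is only demanded for $i, j \geq 1$ in the definition of a filtered Lie ring.

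For (i) and (ii) I would argue simultaneously by induction on $i$. Assuming $L^i(\fa) \subseteq L^{i-1}(\fa)$, every generator $a \tr b$ or $[a, b]$ of $L^{i+1}(\fa)$ (with $a \in \fa$, $b \in L^i(\fa)$) is also a generator of $L^i(\fa)$, which gives (i). The same observation shows that $a \tr b$ and $[a, b]$ lie in $L^{i+1}(\fa) \subseteq L^i(\fa)$ whenever $a \in \fa$ and $b \in L^i(\fa)$, so $L^i(\fa)$ is closed under left multiplication by arbitrary elements of $\fa$ and is an ideal of the Lie ring $\fa$, hence a strong left ideal.

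For (iii) I would induct on $i$, with $j \geq 1$ arbitrary. The base case $i = 1$ is immediate: $[\fa, L^j(\fa)] \subseteq L^{j+1}(\fa)$ by construction. For $i \geq 2$, by bilinearity of the bracket it suffices to check the claim on generators of $L^i(\fa)$, that is, on elements of the form $[a, b]$ or $a \tr b$ with $a \in \fa$ and $b \in L^{i-1}(\fa)$, paired against an arbitrary $c \in L^j(\fa)$. In the first case the Jacobi identity gives $[[a,b], c] = [a, [b,c]] - [b, [a,c]]$; in the second, \eqref{eq: postLie 1} rearranges to $[a \tr b, c] = a \tr [b, c] - [b, a \tr c]$. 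In each of these two formulas one term is of the form $[\fa, L^{i+j-1}(\fa)]$ or $\fa \tr L^{i+j-1}(\fa)$, which lands in $L^{i+j}(\fa)$ by construction, while the other is of the form $[L^{i-1}(\fa), L^{j+1}(\fa)]$, which lands in $L^{i+j}(\fa)$ by the inductive hypothesis applied at $(i-1, j+1)$.

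The main obstacle is (iii): a naive induction on $i + j$ fails precisely because the $[L^{i-1}(\fa), L^{j+1}(\fa)]$ term keeps the sum unchanged. Inducting on $i$ alone with $j$ free sidesteps this, and once both \eqref{eq: postLie 1} and the Jacobi identity are brought to bear, the remaining bookkeeping is routine.
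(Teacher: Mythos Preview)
Your proof is correct. The approaches differ: the paper embeds $\fa$ as an ideal in the semidirect sum $\mathfrak{t}=\fa\oplus_\delta \fh$ with $\fh=\{\cL_a\mid a\in \fa\}\leq \Der(\fa)$, checks that $\gamma^i(\mathfrak{t})=L^i(\fa)\times \gamma^i(\fh)$, and then simply invokes the classical fact $[\gamma^i(\mathfrak{t}),\gamma^j(\mathfrak{t})]\subseteq \gamma^{i+j}(\mathfrak{t})$ for the lower central series of any Lie ring. Your argument is essentially the direct, unfolded version of that classical fact, run twice (once via Jacobi for the $[a,b]$ generators, once via \eqref{eq: postLie 1} for the $a\tr b$ generators). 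The paper's trick is shorter once the auxiliary Lie ring is in place and, since an identical construction $(A,\cdot)\rtimes H$ is used for the companion lemma on skew braces, it highlights the parallel between the two settings; your version, on the other hand, is entirely self-contained and makes transparent exactly which post-Lie axiom is doing the work.
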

\begin{proof}
    It is straightforward to see that the $\fa_i$ form a descending chain of strong left ideals. It remains to show that $[L^i(\fa),L^j(\fa)]\subseteq L^{i+j}(\fa)$ for all $i,j\geq 1$. Let $\fh=\{\mathcal{L}_a\mid a\in \fa\}\leq \Der(\fa)$ and consider the semi-direct sum $\mathfrak{t}=\fa\oplus_\delta \fh$ where $\delta:\fh\to \Der(\fa)$ is the inclusion map. Then we find $[(a,\cL_b),(c,0)]=([a,c]+b\tr c)$. It follows that $[\mathfrak{t},\mathfrak{t}]=L^2(\fa)\times [\fh,\fh]$. For a Lie ring $\fg$ we introduce the following notation for iterated commutators: $\gamma^1(\fg)=\fg$ and $[\gamma^{i+1}(\fg)=[\fg,\gamma^i(\fg)]$. Then the previous equality can be rewritten as $\gamma^2(\mathfrak{t})=L^2(\fa)\times \gamma^2(\fh)$ and by induction one finds $\gamma^i(\mathfrak{t})=L^i(\fa)\times \gamma^i(\fh)$. It is well-known that $[\gamma^i(\mathfrak{t}),\gamma^j(\mathfrak{t})]\subseteq \gamma^{i+j}(\mathfrak{t})$, so in particular $$[L^i(\fa),L^j(\fa)]\times \{0\}=[L^i(\fa)\times \{0\},L^j(\fa)\times \{0\}]\subseteq L^{i+j}(\fa)\times \gamma^{i+j}(\fh),$$
    thus $[L^i(\fa),L^j(\fa)]\subseteq L^{i+j}(\fa)$.
\end{proof}

The above constructed filtration naturally yields a notion of nilpotency, which we coin $L$-nilpotency. Note that for pre-Lie rings this is the same as left nilpotency.
    
\begin{definition}
    A post-Lie ring $(\fa,\tr)$ is \emph{$L$-nilpotent} if $L^{k+1}(\fa)=0$ for some $k\geq0$. The minimal such $k$, if it exists, is the \emph{$L$-nilpotency class} of $(\fa,\tr)$. 
\end{definition}
\begin{lemma}\label{lem: left nilpotent nilpotent type Lie}
    Let $(\fa,\tr)$ be a post-Lie ring. Then $(\fa,\tr)$ is $L$-nilpotent if and only if $(\fa,\tr)$ is left nilpotent and $\fa$ is a nilpotent Lie ring.
\end{lemma}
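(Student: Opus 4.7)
The plan is to establish the equivalence in two steps, with the converse implication carrying the real technical content.

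The forward direction will follow from a direct induction showing $\fa^i\subseteq L^i(\fa)$ and $\gamma^i(\fa)\subseteq L^i(\fa)$ for all $i\geq 1$; this is immediate from the recursive definition of $L^{i+1}(\fa)$ as the subgroup generated by $\fa\tr L^i(\fa)$ together with $[\fa, L^i(\fa)]$. Consequently $L^{n+1}(\fa)=0$ forces $\fa^{n+1}=0$ and $\gamma^{n+1}(\fa)=0$, so $(\fa,\tr)$ is left nilpotent and $\fa$ is a nilpotent Lie ring.

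For the converse, I will assume $\fa^{m+1}=0$ and $\gamma^{k+1}(\fa)=0$, and the strategy is to reuse the semi-direct sum $\mathfrak{t}=\fa\oplus_\delta\fh$ with $\fh=\{\cL_a\mid a\in\fa\}\subseteq\Der(\fa)$, together with the identity $\gamma^i(\mathfrak{t})=L^i(\fa)\oplus\gamma^i(\fh)$ from the proof of \cref{lem: canonical filtration post-Lie}. It will suffice to show $\mathfrak{t}$ is Lie-nilpotent. Left nilpotency of $\fa$ yields $\cL_{a_1}\cdots\cL_{a_m}(\fa)\subseteq\fa^{m+1}=0$, so the associative subring $\fh\subseteq\End(\fa)$ satisfies $\fh^m=0$, which in particular gives $\gamma^m(\fh)=0$.

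The core of the argument will be a tree analysis of iterated brackets in $\mathfrak{t}$. After expanding by multilinearity, each $n$-fold iterated bracket becomes a sum over binary trees with $n$ leaves labelled pure $\fa$ (in $\fa\times 0$) or pure $\fh$ (in $0\times \fh$), with $p\geq 0$ leaves of the first type and $q=n-p$ of the second. The $p=0$ case lies in $\gamma^n(\fh)$ and vanishes for $n\geq m$. When $p\geq 1$, the maximal $\fh$-subtrees, of sizes $q_1+\cdots+q_l=q$, evaluate to elements $H_1,\ldots,H_l\in\fh$; repeatedly applying the derivation identity $H([y,z])=[Hy,z]+[y,Hz]$ will push these through the Lie brackets to act at the $\fa$-leaves, rewriting the $\fa$-component as a sum of iterated Lie brackets of modified leaves $a_i'=H_{j_1}\cdots H_{j_{t_i}}(a_i)$, with $\sum_iQ_i=q$ for $Q_i:=\sum_\ell q_{j_\ell}$.

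The subtlest point, and the main obstacle, will be to observe that each $H_j$, being an iterated Lie bracket of $q_j$ elements of the form $\cL_a$, expands into a sum of compositions of $q_j$ such operators; since every $\cL_a$ raises the filtration $\fa^\bullet$ by one, this forces $a_i'\in\fa^{1+Q_i}$. Because each $\fa^i$ is a Lie ideal, so that $[\fa^a,\fa^b]\subseteq\fa^{\max(a,b)}$, the iterated Lie bracket of the $a_i'$ sits in $\gamma^p(\fa)\cap\fa^{1+\max_iQ_i}$, and thus vanishes unless both $p\leq k$ and $\max_iQ_i\leq m-1$ hold---in which case $n=p+q\leq p+p(m-1)\leq km$. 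Combining the cases, $\gamma^n(\mathfrak{t})=0$ for all $n>km$, which gives $L^n(\fa)=0$ and shows $(\fa,\tr)$ is $L$-nilpotent of class at most $km$.
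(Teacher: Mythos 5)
Your proof is correct in substance, but it takes a genuinely different and much longer route than the paper's. The paper never leaves $\fa$: it shows that if $L^i(\fa)\subseteq\gamma^k(\fa)$ then $[L^i(\fa),\fa]\subseteq\gamma^{k+1}(\fa)$, so that modulo $\gamma^{k+1}(\fa)$ the $L$-series descends at least as fast as the left derived series, giving $L^{i+c}(\fa)\subseteq\fa^{c+1}+\gamma^{k+1}(\fa)=\gamma^{k+1}(\fa)$ and hence $L^{kc+1}(\fa)=0$ by a short induction. You instead pass to the semidirect sum $\mathfrak{t}=\fa\oplus_\delta\fh$ and prove nilpotency of $\mathfrak{t}$ by a combinatorial expansion of iterated brackets. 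Both arguments yield the same bound on the $L$-nilpotency class ($km$ in your notation, $kc$ in the paper's), and your tree analysis is sound: the reduction to maximal $\fh$-subtrees, the pushing of the derivations $H_j$ to the leaves, the bookkeeping $\sum_i Q_i=q$, and the deduction of $L^n(\fa)=0$ from $\gamma^n(\mathfrak{t})=0$ (which only needs the easy inclusion $L^n(\fa)\times\{0\}\subseteq\gamma^n(\mathfrak{t})$ from the proof of \cref{lem: canonical filtration post-Lie}) all check out. The trade-off is that your route is several times longer for the same conclusion, though it does make visible exactly which bracket shapes can survive.

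One assertion, however, is unjustified: that each term $\fa^i$ of the left derived series is a Lie ideal of $\fa$, so that $[\fa^a,\fa^b]\subseteq\fa^{\max(a,b)}$. The $\fa^i$ are left ideals, but the paper nowhere establishes that they are ideals of the Lie ring $\fa$ for general post-Lie rings --- the gap between left ideals and strong left ideals is precisely why the series $L^i(\fa)$ is introduced at all, and for $i\geq 3$ I see no reason for the claim to hold. Fortunately your argument does not need it: if $\max_iQ_i\geq m$, then the corresponding modified leaf satisfies $a_i'\in\fa^{1+Q_i}\subseteq\fa^{m+1}=0$, so the whole iterated bracket vanishes for that reason alone, with no statement required about where a bracket of the $a_i'$ lives. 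Deleting that sentence and concluding directly that a term survives only if $p\leq k$ and every $Q_i\leq m-1$ makes the proof complete.
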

\begin{proof}  
    From the definition of $L^i(\fa)$ it is clear that if $(\fa,\tr)$ is $L$-nilpotent then it is left nilpotent and $\fa$ is a nilpotent Lie ring. 
    
    Conversely, assume that $(\fa,\tr)$ is left nilpotent and $\fa$ is a nilpotent Lie ring. Let $\fa=\gamma^1(\fa)\supseteq ...\supseteq \gamma^d(\fa)=\{0\}$ denote the lower central series of $\fa$ and let $c$ be such that $\fa^{c+1}=\{0\}$ (recall that this is the left derived series defined in the introduction). Assume $L^i(\fa)\subseteq \gamma^k(\fa)$ for some $i,k\geq 1$, then $[L^i(\fa),\fa] \subseteq \gamma^{k+1}(\fa)$ hence $L^{i+1}(\fa)\subseteq (\fa\tr L^i(\fa))+ \gamma^{k+1}(\fa)$. In particular, $$L^{i+c}(\fa)\subseteq \fa^{c+1}+ \gamma^{k+1}(\fa)=\gamma^{k+1}(\fa).$$
    We now conclude by induction that $L^{kc+1}(\fa)\subseteq\gamma^{k+1}(\fa)=1$.
\end{proof}

\begin{theorem}\label{theorem: nilpotency class multiplicative lie ring}
    Let $(\fa,\tr)$ be an $L$-nilpotent post-Lie ring of class $k$. Then $\fa^\circ$ is nilpotent of class at most $k$ and the $k$-th term in the lower central series of $\fa^\circ$ is contained in $\Ann(\fa)$.
\end{theorem}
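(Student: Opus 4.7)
The plan is to relate the lower central series of $\fa^\circ$, which I will denote $\gamma_i^\circ$, to the filtration $L^i(\fa)$ by reusing the semidirect-sum construction from the proof of \cref{lem: canonical filtration post-Lie}. Concretely, let $\fh=\{\cL_a\mid a\in \fa\}\leq \Der(\fa)$ and form $\mathfrak{t}=\fa\oplus_\delta \fh$ with $\delta$ the inclusion. The diagonal $\fg=\{(a,\cL_a)\mid a\in \fa\}$ is a Lie subring of $\mathfrak{t}$, and by \cref{cor: adjoint lie structure} the projection $\pr_\fa\colon \fg\to \fa^\circ$ is a Lie ring isomorphism. In the proof of \cref{lem: canonical filtration post-Lie} it is established that $\gamma^i(\mathfrak{t})=L^i(\fa)\times \gamma^i(\fh)$ for all $i\geq 1$.

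First I would establish the key inclusion $\gamma_i^\circ(\fa)\subseteq L^i(\fa)$. Indeed, for $a\in \gamma_i^\circ(\fa)$ the corresponding element $(a,\cL_a)$ lies in $\gamma^i(\fg)\subseteq \gamma^i(\mathfrak{t})=L^i(\fa)\times \gamma^i(\fh)$, so projecting onto the first coordinate yields $a\in L^i(\fa)$. Specialising to $i=k+1$ and using $L^{k+1}(\fa)=0$ immediately gives $\gamma_{k+1}^\circ(\fa)=0$, which is the nilpotency assertion: $\fa^\circ$ is nilpotent of class at most $k$.

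For the second assertion, let $a\in \gamma_k^\circ(\fa)$ and $b\in \fa$; I would verify the three defining conditions of $\Ann(\fa)$ in turn. Since $a\in L^k(\fa)$, both $[a,b]\in [\fa,L^k(\fa)]\subseteq L^{k+1}(\fa)=0$ and $b\tr a\in \fa\tr L^k(\fa)\subseteq L^{k+1}(\fa)=0$ follow directly from the definition of the filtration $L^i(\fa)$. The remaining identity $a\tr b=0$, which is the one that cannot be obtained by bounding $L^k(\fa)\tr \fa$ directly, is recovered instead from the already-proved nilpotency: rearranging \eqref{eq: a^circ} gives $a\tr b=\{a,b\}+[b,a]+b\tr a$, and all three terms on the right are zero since $\{a,b\}\in \gamma_{k+1}^\circ(\fa)=0$.

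The only real obstacle is the asymmetric role of left multiplication in the definition of $L^i(\fa)$: elements of the form $b\tr a$ with $b\in L^i(\fa)$, $a\in \fa$ are not a priori controlled by this filtration. The trick that avoids this is that in Step 1 the embedding into $\mathfrak{t}$ lets the symmetric $\circ$-bracket inherit the correct bound from the filtration of $\mathfrak{t}$, while in Step 3 the missing identity $a\tr b=0$ is deduced from $\{a,b\}=0$ rather than by attempting to bound $L^k(\fa)\tr \fa$ directly.
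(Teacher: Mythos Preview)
Your argument is correct and proceeds along essentially the same lines as the paper: both identify $\fa^\circ$ with the diagonal $\fg=\{(a,\cL_a)\}$ inside a semidirect sum and bound its lower central series by the induced filtration, giving $\gamma_{k+1}^\circ(\fa)=0$. The only genuine difference is how you extract $a\tr b=0$ for $a\in\gamma_k^\circ(\fa)$. The paper reads this off from the \emph{second} coordinate: using \cref{cor: adjoint lie structure} one has $\gamma_k^\circ(\fa)\subseteq\fa^\circ_k=\fa_k\cap\{a\mid\cL_a\in\End(\fa)_k\}$, and $\End(\fa)_k$ maps $\fa=\fa_1$ into $\fa_{k+1}=0$, so $\cL_a=0$ directly. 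You instead discard the second coordinate, keep only $a\in L^k(\fa)$, and recover $a\tr b=0$ afterwards via the rearrangement of \eqref{eq: a^circ}. Both routes are short; the paper's is marginally more direct because it exploits the information already present in the filtration on $\Der(\fa)$, while yours has the virtue of making the final step entirely self-contained from the defining identity of $\fa^\circ$.
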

\begin{proof}
    Let $(\fa,\tr)$ be $L$-nilpotent of class $k$ and consider it as a filtered post-Lie ring with its canonical filtration. Then $\fa^\circ_{k+1}\subseteq \fa_{k+1}=0$, so in particular $\fa^\circ$ is nilpotent of class at most $k$. Also, since $\fa^\circ_k=\fa_k\cap \{a\in \fa\mid \mathcal{L}_a\in \End(\fa)_{k}\}$ but $\End(\fa)_{k}=0$ and clearly $\fa_k\in Z(\fa)\cap \Fix(\fa)$, the statement follows.
\end{proof}
We end this section by relating $L$-nilpotency to the notion of transitivity (also called completeness) for pre-Lie algebras, as defined in \cite{Kim1986CompleteGroups}. Let $\F$ be a field of characteristic $0$ and $(\fa,\tr)$ be a finite dimensional pre-Lie $\F$-algebra. Recall that $(\fa,\tr)$ is transitive if for every $a\in \fa$, the map $$r_a:\fa\to \fa: b\mapsto b+a\tr b,$$ is a bijection. However, if $\fa^\circ$ is nilpotent, then this is equivalent to the left multiplication maps $\mathcal{L}_a$ being nilpotent for all $a\in \fa$ \cite[Proposition 2]{Segal1992TheAlgebras}. By Engel's theorem, this is equivalent with the existence of some $n$ such that $\mathcal{L}_{a_1}\cdots \mathcal{L}_{a_n}=0$ for all $a_1,\dots,a_n\in \fa$. It follows that transitivity coincides with $L$-nilpotency, or equivalently with the fact that its associated filtered pre-Lie ring is Lazard. Indeed, the divisibility condition is trivially satisfied, as $\fa_k=0$ for some $k\geq 0$, also $\fa^\circ_k=0$ and thus also the completeness follows for both $\fa$ and $\fa^\circ$. Note that the condition that all $\mathcal{L}_a$ are nilpotent also appears in \cite[Theorem 3.1]{Burde2009AffineGroups}. 

\section{Filtered skew braces}\label{section: skew}

\begin{definition}\label{def: skew braces}
    A \emph{filtered skew brace} is a skew brace $(A,\cdot,\circ)$ together with a descending chain of strong left ideals $A_i$ such that it is a filtration for $(A,\cdot)$
\end{definition}

\begin{definition}
    Let $G$ be a filtered group. By $\Aut(G)$ we denote the filtered group automorphisms of $G$ and we define a filtration by $$\Aut(G)_i=\{f\mid f(g)g^{-1}\in G_{i+j}, \text{ for all }g\in G_j\}.$$
\end{definition}
\begin{lemma}\label{lem: Aut(G) is filtered}
    Let $G$ be a filtered group, then $\Aut(G)$ is a filtered group.
\end{lemma}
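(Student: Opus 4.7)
The plan is to introduce the displacement map $\delta_f\colon G\to G$ defined by $\delta_f(g):=f(g)g^{-1}$, which translates the definition into the concise condition
$$\Aut(G)_i=\{f\in\Aut(G)\mid \delta_f(G_j)\subseteq G_{i+j}\text{ for all }j\geq 0\}.$$
I would then check each axiom of a filtered group in turn: $\Aut(G)_0=\Aut(G)$, each $\Aut(G)_i$ is a normal subgroup of $\Aut(G)$, the chain is descending, and $[\Aut(G)_i,\Aut(G)_j]\subseteq \Aut(G)_{i+j}$.

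The key technical tool I would establish first by a short direct calculation is the pair of identities
\begin{equation*}
\delta_{fh}(g)=f(\delta_h(g))\cdot \delta_f(g),\qquad \delta_{f^{-1}}(g)=\delta_f(f^{-1}(g))^{-1}.
\end{equation*}
From these, the routine axioms follow quickly. The equality $\Aut(G)_0=\Aut(G)$ reduces to $f(G_j)\subseteq G_j$, which holds because $f$ is a filtered isomorphism. The subgroup property of $\Aut(G)_i$ is immediate from the two identities, noting that $f^{-1}$ preserves the filtration as well. For normality, given $f\in \Aut(G)_i$ and $h\in \Aut(G)$, a short manipulation yields $\delta_{hfh^{-1}}(g)=h(\delta_f(h^{-1}(g)))$, which lies in $G_{i+j}$ whenever $g\in G_j$. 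The descending chain property is obvious.

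The essential step is the commutator condition. For $f\in \Aut(G)_i$, $h\in \Aut(G)_j$, and $g\in G_k$, I would write $[f,h]=(fh)(hf)^{-1}$ and apply the multiplicativity identity several times. Setting $g':=(hf)^{-1}(g)\in G_k$, $a:=\delta_f(g')\in G_{i+k}$ and $b:=\delta_h(g')\in G_{j+k}$, an unwinding of the formulas yields
\begin{equation*}
\delta_{[f,h]}(g)=f(b)\cdot a\cdot b^{-1}\cdot h(a)^{-1}.
\end{equation*}
Rewriting $f(b)=\delta_f(b)\cdot b$ and $h(a)^{-1}=a^{-1}\cdot \delta_h(a)^{-1}$ to move the principal parts to the outside produces the decisive factorisation
\begin{equation*}
\delta_{[f,h]}(g)=\delta_f(b)\cdot [b,a]\cdot \delta_h(a)^{-1}.
\end{equation*}
All three factors lie in $G_{i+j+k}$: the outer ones since $b\in G_{j+k}$ and $a\in G_{i+k}$ are displaced by $f\in \Aut(G)_i$ and $h\in \Aut(G)_j$ respectively, and $[b,a]\in [G_{j+k},G_{i+k}]\subseteq G_{i+j+2k}\subseteq G_{i+j+k}$ by the filtration axiom on $G$.

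The main obstacle will be the commutator calculation: because $G$ is nonabelian, one must carefully track the order of factors, and the mechanism ensuring $[f,h]\in \Aut(G)_{i+j}$ is precisely the cancellation that collects the principal parts into a group commutator $[b,a]$, which by hypothesis lives in a deeper stratum of the filtration.
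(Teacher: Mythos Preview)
Your argument is correct. The displacement identities are right, and the key factorisation
\[
\delta_{[f,h]}(g)=\delta_f(b)\cdot[b,a]\cdot\delta_h(a)^{-1}
\]
does exactly what you claim: the outer factors land in $G_{i+j+k}$ by the defining property of $\Aut(G)_i$ and $\Aut(G)_j$, and the middle commutator lands there by the filtration axiom on $G$.

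The paper takes a different, more conceptual route. It embeds everything in the semidirect product $T=G\rtimes\Aut(G)$, where the displacement $\delta_f(g)$ becomes the honest group commutator $[f,g]$ computed in $T$. The defining condition on $\Aut(G)_i$ then reads $[\Aut(G)_i,G_k]\subseteq G_{i+k}$, and the commutator axiom $[\Aut(G)_i,\Aut(G)_j]\subseteq\Aut(G)_{i+j}$ is obtained in one line from the Hall--Witt identity (equivalently the three subgroups lemma): knowing $[\Aut(G)_i,[\Aut(G)_j,G_k]]\subseteq G_{i+j+k}$ and its symmetric variant forces $[G_k,[\Aut(G)_i,\Aut(G)_j]]\subseteq G_{i+j+k}$. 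Your explicit factorisation is essentially an unpacked, by-hand version of this three subgroups argument; what you gain is that no external lemma is invoked and the proof is entirely self-contained, while the paper's version is shorter and makes transparent that the phenomenon is nothing more than the Hall--Witt identity in a larger group.
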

\begin{proof}
    Consider the semidirect product $T=G\rtimes \Aut(G)$. We identify $G$ and $\Aut(G)$ with the subgroups $G\times \{1\}$ and $\{1\}\times \Aut(G)$. Note that the subgroups $G_i$ are normal in $T$. By assumption, $[\Aut(G)_i,[\Aut(G)_j,G_k]]\subseteq G_{i+j+k}$, so by the Hall-Witt identity we find that $[G_k,[\Aut(G)_i,\Aut(G)_j]]\subseteq G_{i+j+k}$ and thus we conclude that $[\Aut(G)_i,\Aut(G)_j]\subseteq \Aut(G)_{i+j}$. 
\end{proof}
\begin{lemma}
    Let $A,G$ be filtered groups and $\lambda:G\to Aut(A)$ a homomorphism of filtered groups. Then the semidirect product $A\rtimes_\lambda G$ is a filtered group for the filtration $(A\rtimes_\lambda G)_i=A_i\rtimes_\lambda G_i$ is a filtered group.
\end{lemma}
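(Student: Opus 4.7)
The plan is to verify the three defining conditions of a filtered group for $A\rtimes_\lambda G$ equipped with the proposed filtration: that each $A_i\rtimes_\lambda G_i$ is a subgroup, that it is normal in $A\rtimes_\lambda G$, and that the commutator inclusion $[(A\rtimes_\lambda G)_i,(A\rtimes_\lambda G)_j]\subseteq(A\rtimes_\lambda G)_{i+j}$ holds. The descending chain $(A\rtimes_\lambda G)_0\supseteq(A\rtimes_\lambda G)_1\supseteq\cdots$, as well as the identity $(A\rtimes_\lambda G)_0=A\rtimes_\lambda G$, are immediate from the filtrations on $A$ and $G$.

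First I would observe that because $\lambda$ takes values in $\Aut(A)$, every $\lambda_g$ preserves each $A_i$ setwise, so the semidirect product multiplication $(a,g)(a',g')=(a\lambda_g(a'),gg')$ keeps $A_i\rtimes_\lambda G_i$ closed under products and inverses. For normality, conjugating $(b,h)\in A_i\rtimes_\lambda G_i$ by an arbitrary $(a,g)\in A\rtimes_\lambda G$ gives a pair whose $G$-component is $ghg^{-1}\in G_i$ and whose $A$-component is $a\lambda_g(b)\lambda_{ghg^{-1}}(a^{-1})$. The factor $\lambda_g(b)$ lies in $A_i$ because $\lambda_g$ preserves $A_i$; and $a\lambda_{ghg^{-1}}(a^{-1})$ lies in $A_i$ since $ghg^{-1}\in G_i$ yields $\lambda_{ghg^{-1}}\in\Aut(A)_i$, whence $\lambda_{ghg^{-1}}(a^{-1})\cdot a\in A_i$ by taking $j=0$ in the definition of $\Aut(A)_i$, after which the normality of $A_i$ in $A$ rearranges it into the required form.

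The crux is the commutator condition. I would identify $A$ and $G$ with their canonical copies inside $A\rtimes_\lambda G$, so that $A_i\rtimes_\lambda G_i$ is generated as a subgroup by $A_i\cup G_i$. Since $(A\rtimes_\lambda G)_{i+j}$ is now known to be normal, the standard commutator identities $[xy,z]=x[y,z]x^{-1}\cdot[x,z]$ and $[x,yz]=[x,y]\cdot y[x,z]y^{-1}$ reduce the verification to pairs of generators. The pure cases $[A_i,A_j]\subseteq A_{i+j}$ and $[G_i,G_j]\subseteq G_{i+j}$ are read off from the filtrations of $A$ and $G$. The mixed case is the direct computation $[(a,1),(1,h)]=(a\lambda_h(a^{-1}),1)$: for $a\in A_i$ and $h\in G_j$, the hypothesis that $\lambda$ is a homomorphism of filtered groups gives $\lambda_h\in\Aut(A)_j$, so $\lambda_h(a^{-1})a\in A_{i+j}$, and normality of $A_{i+j}$ in $A$ then delivers $a\lambda_h(a^{-1})\in A_{i+j}$.

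I do not expect a real obstacle: every step is a one-line computation inside the semidirect product. The only place where the hypothesis that $\lambda$ is a homomorphism of \emph{filtered} groups enters non-trivially is the mixed commutator, and it is applied directly through the defining property of $\Aut(A)_j$. In spirit this is essentially the same argument as in \cref{lem: Aut(G) is filtered}, now in the slightly more general setting of an abstract filtered action.
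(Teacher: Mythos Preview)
Your proposal is correct and follows essentially the same approach as the paper: identify $A$ and $G$ with their canonical copies in the semidirect product and reduce the commutator condition to the four generator cases $[A_i,A_j]$, $[G_i,G_j]$, $[A_i,G_j]$, $[A_j,G_i]$. You are more explicit about justifying this reduction via the commutator identities and the already-established normality (and you also verify the subgroup and normality conditions separately), whereas the paper simply invokes the semidirect-product commutator formula directly.
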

\begin{proof}
    Since we are working in a semi-direct product we find that $$[A_i\rtimes_\lambda G_i,A_j\rtimes_\lambda G_j]=[A_i,A_j][A_i,G_j][A_j,G_i][G_i,G_j],$$
    where we identitify $A$ and $G$ with the corresponding subgroup of $A\rtimes G$. Clearly $[A_i,A_j]\subseteq A_{i+j}$ and $[G_i,G_j]\subseteq G_{i+j}$. Since $\lambda$ is a homomorphism of filtered group, also $[A_i,G_j]\subseteq A_{i+j}$ and $[A_j,G_i]\subseteq A_{i+j}$ which concludes the proof.
\end{proof}
Let $A$ be a filtered group. We will particularly be interested in the \emph{holomorph} of $A$, which by definition is the semi-direct product $\Hol(A):=A\rtimes \Aut(A)$.
\begin{definition}
    Let $A$ be a filtered group. A subgroup $G\leq \Hol(A)$ is \emph{transitive}, respectively \emph{regular}, if $\pr_G:H\to G$ is surjective, respectively bijective.
\end{definition}
    Note that if we consider the natural action of $\Hol(G)$ on $G$, i.e. $(g,\lambda)\cdot h=g\lambda(h)$, then transitive subgroups of $\Hol(G)$ are precisely the ones such that the associated action is transitive. Regular subgroups are precisely the ones that induces a regular (or also called simply transitive) action.
\begin{proposition}\label{prop: correspondence skew braces subgroups}
    Let $(A,\cdot)$ be a filtered group. There is a bijective correspondence between group structures $(A,\circ)$ making $(A,\cdot,\circ)$ into a filtered skew brace and regular subgroups of $\Hol(A,\cdot)$.
\end{proposition}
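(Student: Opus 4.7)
The plan is to lift the classical correspondence between skew braces and regular subgroups of the holomorph to the filtered setting. The key observation is that, with the conventions of this section, $\Hol(A,\cdot)=A\rtimes \Aut(A,\cdot)$ involves the filtered automorphisms of $(A,\cdot)$, so membership in $\Hol(A,\cdot)$ already encodes the left-ideal part of the filtered skew brace condition for free.

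Starting from a filtered skew brace $(A,\cdot,\circ)$ with filtration $\{A_i\}$, I would introduce the map
\[
\phi\colon A\to \Hol(A,\cdot),\qquad a\mapsto (a,\lambda_a),
\]
with $\lambda_a(b)=a^{-1}\cdot (a\circ b)$. The standard skew brace identity $\lambda_{a\circ b}=\lambda_a\lambda_b$, together with $a\cdot \lambda_a(b)=a\circ b$, shows that $\phi$ is an injective group homomorphism from $(A,\circ)$ to $\Hol(A,\cdot)$, and that its image $G=\phi(A)$ is a regular subgroup. The crucial extra point to verify is that each $\lambda_a$ really lies in the \emph{filtered} automorphism group $\Aut(A,\cdot)$; since $\lambda_a$ is already a group automorphism of $(A,\cdot)$, this reduces to $\lambda_a(A_i)\subseteq A_i$ for all $i$, which is precisely the left-ideal condition built into the definition of a filtered skew brace.

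Conversely, given a regular subgroup $G\leq \Hol(A,\cdot)$, I would use the bijection $\pr_A\colon G\to A$ to transport the group structure of $G$: let $g_a$ denote the unique preimage of $a$ in $G$, and set $a\circ b:=\pr_A(g_a g_b)$. Writing $g_a=(a,\mu_a)$, the product in $\Hol(A,\cdot)$ forces $a\circ b=a\cdot \mu_a(b)$, from which a routine computation confirms that $(A,\cdot,\circ)$ is a skew brace and that $\mu_a=\lambda_a$. Since by hypothesis $G$ sits inside $A\rtimes \Aut(A,\cdot)$ with $\Aut$ denoting filtered automorphisms, each $\lambda_a$ respects the filtration; combined with the normality of each $A_i$ in $(A,\cdot)$ inherited from the filtered group structure, this shows that every $A_i$ is a strong left ideal, i.e.\ $(A,\cdot,\circ)$ is a filtered skew brace.

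Finally, the two constructions are mutually inverse by construction: starting from $(A,\cdot,\circ)$, forming $G=\phi(A)$ and recovering $\circ$ returns the original operation, because $\pr_A(\phi(a)\phi(b))=a\circ b$; and starting from a regular $G$ and writing its elements as $(a,\lambda_a)$ yields $\phi(A)=G$. I do not anticipate a genuine obstacle beyond careful bookkeeping: the only subtlety is to keep firmly in mind that ``$\Aut$'' in our setup denotes filtered automorphisms, which is exactly what makes the left-ideal condition on the filtration appear automatically on the holomorph side.
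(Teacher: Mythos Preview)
Your proposal is correct and follows essentially the same approach as the paper: both directions are obtained by passing between $(A,\circ)$ and the set $\{(a,\lambda_a)\mid a\in A\}\leq \Hol(A,\cdot)$ via $\pr_A$, with the filtered condition on $\lambda_a$ matching the strong-left-ideal condition on the $A_i$. Your write-up is a bit more detailed (explicitly checking the mutual inverse and isolating the role of filtered automorphisms), but the argument is the same.
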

\begin{proof}
    The proof is only a slight variation of the proof of
    \cite[Theorem 4.2]{guarnieri2017skew}, therefore we only give a sketch. Let $(A,\circ)$ be a group operation such that $(A,\cdot,\circ)$ is a filtered skew brace. Define $G=\{(a,\lambda_a)\mid a\in A\}\leq \Hol(A)$ then $G$ is a regular subgroup of $\Hol(A)$. Note that $A_i$ are strong left ideals so indeed $\lambda_a$ respects this filtration for all $a\in A$.
    
    Conversely, if $G$ is a regular subgroup of $\Hol(A)$, we transfer the group structure of $G$ onto $A$ with the help of the bijection $\pr_A:G\to A$ (or equivalently $a\circ b=a+\lambda_a(b)$ with $\lambda_a$ the unique map such that $(a,\lambda_a)\in G$) and we denote this new group operation by $\circ$. Then $(A,\cdot,\circ)$ becomes a skew brace and by assumption the maps $\lambda_a$ preserve the filtration $A_i$, hence the $A_i$ are strong left ideals.
\end{proof}
\begin{corollary}\label{cor: filtration on adjoint group}
    Let $(A,\cdot,\circ)$ be a filtered skew brace. Then $(A,\circ)$ with the filtration $(A,\circ)_i=A_i\cap \{a\in A\mid \lambda_a\in \Aut(A)_i\}$ is a filtered group.
\end{corollary}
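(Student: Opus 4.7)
The plan is to mirror the argument used for \cref{cor: adjoint lie structure} in the Lie ring setting, now with the role of \cref{prop: correspondence postLie subLie} and $\aff(\fa)$ replaced by \cref{prop: correspondence skew braces subgroups} and $\Hol(A,\cdot)$. The whole statement should drop out formally from the fact that $\Hol(A,\cdot)$ is a filtered group (by the two preceding lemmas), that the skew brace corresponds to a regular subgroup, and that the projection to $A$ is a group isomorphism onto $(A,\circ)$ that carries this filtration to the one in the statement.

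Concretely, I would first invoke \cref{lem: Aut(G) is filtered} together with the subsequent lemma to equip $\Hol(A,\cdot) = (A,\cdot) \rtimes \Aut(A,\cdot)$ with the filtration $\Hol(A,\cdot)_i = A_i \rtimes \Aut(A,\cdot)_i$, which makes it into a filtered group. By \cref{prop: correspondence skew braces subgroups}, the skew brace structure on $A$ corresponds to the regular subgroup $G = \{(a,\lambda_a) \mid a \in A\} \leq \Hol(A,\cdot)$, and the restriction $\pr_A\colon G \to (A,\circ)$ is, by construction of $\circ$, an isomorphism of groups. Endow $G$ with the induced subgroup filtration $G_i = G \cap \Hol(A,\cdot)_i$; this is automatically a filtration since
\[
[G_i, G_j] \subseteq G \cap [\Hol(A,\cdot)_i, \Hol(A,\cdot)_j] \subseteq G \cap \Hol(A,\cdot)_{i+j} = G_{i+j},
\]
and normality of $G_i$ in $G$ is inherited from normality of $\Hol(A,\cdot)_i$ in $\Hol(A,\cdot)$.

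It then remains to transport this filtration through the isomorphism $\pr_A\colon G \to (A,\circ)$ and verify that it matches the one in the statement. An element $a \in A$ lies in $\pr_A(G_i)$ precisely when $(a, \lambda_a) \in A_i \rtimes \Aut(A,\cdot)_i$, which is equivalent to $a \in A_i$ together with $\lambda_a \in \Aut(A,\cdot)_i$. Hence $\pr_A(G_i) = A_i \cap \{a \in A \mid \lambda_a \in \Aut(A,\cdot)_i\}$, and $(A,\circ)$ equipped with this filtration is a filtered group, being isomorphic as a filtered group to $G$. I do not expect a genuine obstacle here; everything is bookkeeping on top of the preceding two lemmas, the only subtle point being to unwind the explicit form of $G$ to identify the transported filtration with the one stated.
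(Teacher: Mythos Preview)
Your proposal is correct and follows essentially the same approach as the paper: invoke the correspondence with regular subgroups of $\Hol(A,\cdot)$, endow the associated regular subgroup $G$ with the induced filtration $G_i = G \cap \Hol(A,\cdot)_i$, and transport along the isomorphism $(A,\circ)\to G:a\mapsto(a,\lambda_a)$. Your write-up simply spells out a few more details (the commutator containment for the induced filtration and the explicit identification of $\pr_A(G_i)$) than the paper's terse version.
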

\begin{proof}
    Let $(A,\cdot,\circ)$ be a filtered skew brace and let $G$ be the associated regular subgroup of $\Hol(A,\cdot)$ as in \cref{prop: correspondence skew braces subgroups}. Then $G_i=G\cap \Hol(A,\cdot)_i$ makes $G$ into a filtered group. Transferring this filtration of $G$ onto $(A,\circ)$ through the isomorphism $(A,\circ)\to G:a\mapsto (a,\lambda_a)$ now yields the statement.
\end{proof}
The notion of a Lazard skew brace is now the natural one.
\begin{definition}
    Let $(A,\cdot,\circ)$ be a filtered skew brace, then $(A,\cdot,\circ)$ is \emph{Lazard} if both $(A,\cdot)$ and $(A,\circ)$ are Lazard groups.
\end{definition}
We now introduce a natural minimal filtration on skew braces. For $(A,\cdot,\circ)$ a skew brace, define inductively $L^1(A)=A$ and $L^{i+1}(A)$ the subgroup of $(A,\cdot)$ generated by all elements of the form $a*b:=a^{-1}\cdot (a\circ b)\cdot b^{-1}$ and $a\cdot b\cdot a^{-1}\cdot b^{-1}$ for $a\in A$, $b\in L^i(A)$. Note that $L^i(A)$ is the left hand version of the series $R_n(A,A)$ as defined in \cite{acri2020retractability}. By induction, one sees that the $L^i(A)$ form a descending chain of strong left ideals of $A$. Also, note that if $A$ is a brace, then $L^i(A)=A^{i}$, so we recover the left derived series.
\begin{lemma}
    Let $(A,\cdot,\circ)$ be a skew brace. Then setting $A_0=A$ and $A_i=L^i(A)$ for $i\geq 1$ makes $(A,\cdot,\circ)$ into a filtered skew brace.
\end{lemma}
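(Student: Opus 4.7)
The statement has two parts: that $\{L^i(A)\}$ is a descending chain of strong left ideals of $A$, and that $[L^i(A), L^j(A)]_\cdot \subseteq L^{i+j}(A)$. The first part is a straightforward induction on $i$: the generators $a * b$ and $a\cdot b\cdot a^{-1}\cdot b^{-1}$ defining $L^{i+1}(A)$ are stable under $\cdot$-conjugation and under each $\lambda_c$, since $\lambda_c$ is a $\cdot$-automorphism that commutes appropriately with $*$ and with the group commutator. So $L^{i+1}(A)$ remains a strong left ideal whenever $L^i(A)$ is, which is the part the author flags as immediate.

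For the filtration condition, I would mimic the proof of \cref{lem: canonical filtration post-Lie} in the group setting. Let $H := \lambda((A,\circ)) \leq \Aut(A,\cdot)$, which is a subgroup since $\lambda$ is a group homomorphism; note that $\gamma^k(H) = \lambda(\gamma^k(A,\circ))$ for all $k$. Form the semidirect product $T := A \rtimes_\lambda H$ and identify $A \cong A \times \{1\}$ and $H \cong \{1\} \times H$ inside $T$. The three commutator identities
\[
[(a,1),(b,1)] = (a\cdot b\cdot a^{-1}\cdot b^{-1},\,1), \quad [(1,\lambda_a),(b,1)] = (a * b,\,1), \quad [(1,h),(1,k)] = (1,\,[h,k])
\]
show by induction on $i$ that $L^i(A) \times \{1\} \subseteq \gamma^i(T)$. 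Combined with the standard inclusion $[\gamma^i(T), \gamma^j(T)] \subseteq \gamma^{i+j}(T)$, it suffices to prove $\gamma^{i+j}(T) \cap A \subseteq L^{i+j}(A)$.

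To establish this last inclusion I would prove by induction on $k$ that $\gamma^k(T) = L^k(A) \cdot \gamma^k(H)$, the semidirect product $L^k(A) \rtimes \gamma^k(H)$, well-defined because $L^k(A)$ is $H$-invariant as a strong left ideal. This mirrors the Lie-theoretic identity $\gamma^k(\mathfrak{t}) = L^k(\fa) \oplus \gamma^k(\fh)$ appearing in the post-Lie proof. The inductive step rests on the key claim: for $z \in \gamma^k(A,\circ)$ and $a \in A$, one has $z * a \in L^{k+1}(A)$; equivalently, elements of $\gamma^k(H)$ act on $A$ by automorphisms that shift each element by $L^{k+1}(A)$. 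I would prove this by a secondary induction on $k$, using the identity $(x \circ y) * a = (x * (y * a)) \cdot (y * a) \cdot (x * a)$ together with the observations that each quotient $L^j(A)/L^{j+1}(A)$ is central in $A/L^{j+1}(A)$ and fixed pointwise by $H$. These properties ensure that, for example, in the abelian quotient $L^2(A)/L^3(A)$ one gets $(x \circ y) * a \equiv (y*a)(x*a)$, so that values of $*$ on iterated $\circ$-commutators telescope to the desired depth.

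The principal obstacle is precisely this key claim $\gamma^k(A,\circ) * A \subseteq L^{k+1}(A)$: while it parallels the Lie analogue $\gamma^k(\fh) \cdot \fa \subseteq L^{k+1}(\fa)$ used in \cref{lem: canonical filtration post-Lie}, the group-theoretic version requires careful bookkeeping of commutator relations in both $(A,\cdot)$ and $(A,\circ)$ and a layer-by-layer analysis in the successive central quotients $L^j(A)/L^{j+1}(A)$, rather than a single application of the Jacobi identity as in the Lie case.
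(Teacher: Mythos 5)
Your proposal takes essentially the same route as the paper's proof: form the semidirect product $T=(A,\cdot)\rtimes H$ with $H=\{\lambda_a\mid a\in A\}$, establish $\gamma^i(T)=L^i(A)\rtimes\gamma^i(H)$ by induction, and conclude via $[\gamma^i(T),\gamma^j(T)]\subseteq\gamma^{i+j}(T)$. The auxiliary claim you isolate, that $d*a\in L^{k+1}(A)$ for $\lambda_d\in\gamma^k(H)$, is indeed needed for the inductive step (the paper's ``by induction one finds'' glosses over it) and is correct; it can be obtained either by your layer-by-layer argument with the identity $(x\circ y)*a=(x*(y*a))\cdot(y*a)\cdot(x*a)$ or, more quickly, by the Hall--Witt/three-subgroups argument inside $T$, exactly as in the proof of \cref{lem: Aut(G) is filtered}.
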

\begin{proof}
    The proof is almost word for word the same as \cref{lem: canonical filtration post-Lie}. The only non-trivial part of the statement is that $[L^i(A),L^j(A)]\subseteq L^{i+j}(A)$ for all $i,j\geq 1$. Let $$H=\{\lambda_a\mid a\in A\}\leq \Aut(A,\cdot),$$ and consider the semi-direct product $T=(A,\cdot)\rtimes_\beta H$ where $\beta:H\to \Aut(A,\cdot)$ is the inclusion map. Then we find $[(a,\lambda_b),(c,1)]=([a,c]+b*c,1)$. It follows that $[T,T]=L^2(A)\times [H,H]$. For a group $G$ we define its lower central series as $\gamma^1(G)=G$ and $[\gamma^{i+1}(G)=[\fg,\gamma^i(G)]$. Then the previous equality can be rewritten as $\gamma^2(T)=L^2(A)\times \gamma^2(H)$ and by induction one finds $\gamma^i(T)=L^i(A)\times \gamma^i(H)$. It is well-known that $[\gamma^i(T),\gamma^j(T)]\subseteq \gamma^{i+j}(T)$, so in particular $$[L^i(A),L^j(A)]\times \{0\}=[L^i(\fa)\times \{0\},L^j(\fa)\times \{0\}]\subseteq L^{i+j}(\fa)\times \gamma^{i+j}(\fh),$$
    thus $[L^i(\fa),L^j(\fa)]\subseteq L^{i+j}(\fa)$.
\end{proof}

The canonical filtration introduced above leads to a natural notion of $L$-nilpotency of skew braces, which for braces coincides with left nilpotency.
\begin{definition}
    A skew brace $(A,\cdot,\circ)$ is \emph{$L$-nilpotent} if there $L^{k+1}(A)=1$ for some $k\geq 0$. The minimal such $k$, if it exists, is the \emph{$L$-nilpotency class} of $A$.
\end{definition}

The proof of the following two is completely analogous to those of \cref{lem: left nilpotent nilpotent type Lie} and \cref{theorem: nilpotency class multiplicative lie ring}.
\begin{lemma}\label{lem: left nilpotent nilpotent type brace}
    Let $(A,\cdot,\circ)$ be a skew brace. Then $A$ is $L$-nilpotent if and only if $A$ is left nilpotent and $(A,\cdot)$ is a nilpotent group.
\end{lemma}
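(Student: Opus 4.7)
The plan is to mirror the proof of \cref{lem: left nilpotent nilpotent type Lie} in the group-theoretic setting. For the easy direction, since $L^{i+1}(A)$ is generated by elements of the form $a*b$ and group commutators $[a,b]$ with $b\in L^i(A)$, an immediate induction on $i$ yields $A^i\subseteq L^i(A)$ and $\gamma^i(A,\cdot)\subseteq L^i(A)$; hence $L$-nilpotency forces both left nilpotency and nilpotency of $(A,\cdot)$.

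For the converse, suppose $A^{c+1}=\{1\}$ and $\gamma^{d+1}(A,\cdot)=\{1\}$. The central claim, analogous to the Lie ring case, is that $L^i(A)\subseteq\gamma^k(A,\cdot)$ with $i,k\geq 1$ implies $L^{i+c}(A)\subseteq\gamma^{k+1}(A,\cdot)$; granting this, iterating from $L^1(A)=A=\gamma^1(A,\cdot)$ yields $L^{1+cd}(A)\subseteq\gamma^{d+1}(A,\cdot)=\{1\}$. I would establish the claim via the stronger inclusion $L^{i+j}(A)\subseteq A^{j+1}\cdot\gamma^{k+1}(A,\cdot)$ for all $j\geq 0$, proved by induction on $j$. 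The base case $j=0$ is trivial. In the inductive step, commutator generators $[a,b]$ of $L^{i+j+1}(A)$ with $b\in L^{i+j}(A)\subseteq\gamma^k(A,\cdot)$ automatically lie in $\gamma^{k+1}(A,\cdot)$; for a generator $a*b$, I would write $b=cd$ with $c\in A^{j+1}$ and $d\in\gamma^{k+1}(A,\cdot)$ (possible because $\gamma^{k+1}(A,\cdot)$ is normal in $(A,\cdot)$, so the product $A^{j+1}\cdot\gamma^{k+1}(A,\cdot)$ is a subgroup in which every element admits such a factorization) and expand
\[ a*(cd)=\lambda_a(c)\lambda_a(d)d^{-1}c^{-1}=(a*c)\cdot(c\cdot(a*d)\cdot c^{-1}), \]
where $a*c\in A^{j+2}$ and $a*d\in\gamma^{k+1}(A,\cdot)$ since $\lambda_a\in\Aut(A,\cdot)$ preserves the lower central series; the rightmost factor then lies in $\gamma^{k+1}(A,\cdot)$ by normality.

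The main obstacle compared with the post-Lie ring proof is the non-linearity of $*$ in its second argument: one cannot distribute $a*(cd)$ over the product in the naive way that $\tr$ distributes over $+$. The resolution is precisely the expansion above, leveraging both normality of $\gamma^{k+1}(A,\cdot)$ in $(A,\cdot)$ and the fact that the lower central series is invariant under every automorphism of $(A,\cdot)$, in particular under each $\lambda_a$.
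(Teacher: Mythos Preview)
Your proposal is correct and follows exactly the route the paper intends: the paper gives no independent proof for this lemma, merely stating it is ``completely analogous'' to \cref{lem: left nilpotent nilpotent type Lie}, and you have supplied precisely that translation. The one genuine adaptation required---the expansion $a*(cd)=(a*c)\cdot c(a*d)c^{-1}$ together with the observation that each $\lambda_a$ preserves the lower central series of $(A,\cdot)$---is exactly what is needed to overcome the non-linearity of $*$ that you identify, and the paper leaves this detail to the reader.
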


\begin{theorem}\label{theorem: nilpotency class multiplicative skew brace}
    Let $(A,\cdot,\circ)$ be an $L$-nilpotent skew brace of class $k$. Then $(A,\circ)$ is nilpotent of class at most $k$ and the $k$-th term in the lower central series of $(A,\circ)$ is contained in $\Ann(A)$.
\end{theorem}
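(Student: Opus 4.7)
The plan is to mirror the argument of \cref{theorem: nilpotency class multiplicative lie ring}, translating from Lie rings to groups. I would endow $A$ with the canonical filtration $A_i = L^i(A)$ from the preceding lemma; then $A_{k+1} = 1$, and by \cref{cor: filtration on adjoint group} the group $(A,\circ)$ becomes filtered via
$$(A,\circ)_i = A_i \cap \{a \in A \mid \lambda_a \in \Aut(A,\cdot)_i\}.$$

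The first step is to identify $(A,\circ)_1 = A$. By definition $\lambda_a \in \Aut(A,\cdot)_1$ iff $\lambda_a(g)g^{-1} \in A_{j+1}$ whenever $g \in A_j$, and since $\lambda_a(g)g^{-1} = a * g \in L^{j+1}(A) = A_{j+1}$ by the very construction of $L^{j+1}(A)$, this holds for every $a$. In a filtered group with $G_1 = G$, one has $\gamma^i(G) \subseteq G_i$ by iterating $[G,G_i] = [G_1,G_i] \subseteq G_{i+1}$. Applied here, the inclusion $\gamma^{k+1}(A,\circ) \subseteq (A,\circ)_{k+1} \subseteq A_{k+1} = 1$ yields nilpotency of $(A,\circ)$ of class at most $k$.

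For the annihilator assertion I analyze $(A,\circ)_k$. The condition $\lambda_a \in \Aut(A,\cdot)_k$ combined with $A_{k+1} = 1$ forces $\lambda_a = \id$: taking $j = 1$ in the defining condition gives $\lambda_a(g)g^{-1} \in A_{k+1} = 1$ for every $g \in A$. Hence $a \circ b = a \cdot b$ for all $b$. Membership $a \in A_k$ gives, via $[A_k, A_1] \subseteq A_{k+1} = 1$, that $a$ is central in $(A,\cdot)$, and via $b * a \in A_{k+1} = 1$ that $\lambda_b(a) = a$ for every $b$, that is, $a \in \Fix(A)$. Combining these: $b \circ a = b \cdot \lambda_b(a) = b \cdot a = a \cdot b = a \circ b$, so $a \in \Ann(A)$. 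Since $\gamma^k(A,\circ) \subseteq (A,\circ)_k$, the claim follows.

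The only delicate point is the filtration property $[A_i, A_j] \subseteq A_{i+j}$ needed both to compute $(A,\circ)_i$ and to conclude $A_k \subseteq Z(A,\cdot)$; this is precisely what the preceding lemma establishes through the semi-direct product trick with $\{\lambda_a\}$. Beyond that, the proof is essentially bookkeeping in the canonical filtration, with the correspondence between filtered skew braces and regular subgroups of the holomorph (through \cref{cor: filtration on adjoint group}) doing all the structural work. I expect no surprises relative to the Lie ring case: the multiplicative analogue of $\End(\fa)_k = 0$ is the triviality of $\Aut(A,\cdot)_k$ on $A_1$, and everything else is a direct transcription.
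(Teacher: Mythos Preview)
Your proof is correct and takes essentially the same approach as the paper, which simply remarks that the argument is completely analogous to that of \cref{theorem: nilpotency class multiplicative lie ring}. You have in fact spelled out the translation to the group setting---verifying $(A,\circ)_1=A$, the triviality of $\lambda_a$ for $a\in(A,\circ)_k$, and the containment $A_k\subseteq Z(A,\cdot)\cap\Fix(A)$---more explicitly than the paper does.
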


\section{A correspondence between post-Lie rings and skew braces}\label{section: lazard generalized}

Let $\fa$ be a filtered Lie ring. We define $\Aut(\fa)$ as all filtered Lie ring automorphisms of $\fa$ and $\Aut(\fa)_i=\Aut(\fa)\cap \End(\fa)_i$ for $i\geq 0$.

\begin{lemma}\label{lem: lazard derivations and auto}
    Let $\fa$ be a Lazard Lie ring. Then $\Aut(\fa)_i=\exp(\Der(\fa)_i)$ for all $i\geq 1$. In particular, $\exp:\Laz(\Der(\fa)^+)\to \Aut(\fa)^+$ is a group isomorphism and thus $\Aut(\fa)^+$ is a Lazard group.
\end{lemma}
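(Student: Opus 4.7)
The plan is to establish the set-theoretic equality $\Aut(\fa)_i = \exp(\Der(\fa)_i)$ for each $i \geq 1$ by proving both inclusions. Once this is in hand, the exponential restricts from the group isomorphism $\exp \colon \Laz(\End(\fa)^+) \to 1 + \End(\fa)_1$ of \cref{prop: Lazard in ring} to a group isomorphism $\Laz(\Der(\fa)^+) \to \Aut(\fa)^+$: the BCH formula uses only the Lie bracket and $\Der(\fa)_1$ is a Lie subring of $\End(\fa)_1$, so $\BCH(d_1, d_2)$ is again a derivation whenever $d_1, d_2$ are. The Lazard structure on $\Aut(\fa)^+$ is then transferred from $\Laz(\Der(\fa)^+)$.

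For the inclusion $\exp(\Der(\fa)_i) \subseteq \Aut(\fa)_i$, let $d \in \Der(\fa)_i$. Since $d^k \in \End(\fa)_{ki}$, the series $\exp(d) = \sum_{k \geq 0} d^k/k!$ converges in $1 + \End(\fa)_i$. A straightforward induction on $n$ from the Leibniz identity yields the higher-order rule $d^n[x,y] = \sum_{k=0}^{n} \binom{n}{k}[d^k x, d^{n-k} y]$; multiplying by $1/n!$ and summing over $n$ gives $\exp(d)[x,y] = [\exp(d) x, \exp(d) y]$, so $\exp(d)$ is a Lie ring automorphism lying in $\Aut(\fa)_i$.

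For the reverse inclusion, let $\phi \in \Aut(\fa)_i$ and put $d := \log \phi$, which lies in $\End(\fa)_i$ by \cref{prop: Lazard in ring}. It suffices to show that the bilinear map $D(x, y) := d[x, y] - [dx, y] - [x, dy]$ vanishes. On the module of bilinear maps $F \colon \fa \times \fa \to \fa$, with the filtration $F \in \Hom_k$ defined by $F(\fa_a, \fa_b) \subseteq \fa_{a+b+k}$, introduce two operators $L(F)(x, y) := dF(x,y)$ and $R(F)(x, y) := F(dx, y) + F(x, dy)$. A direct check shows $L$ and $R$ commute and both raise filtration by one, and moreover $\exp(L)[-,-](x, y) = \phi[x,y]$ while $\exp(R)[-,-](x, y) = [\phi x, \phi y]$. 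The Lie-automorphism identity therefore rewrites as $\exp(L)[-,-] = \exp(R)[-,-]$, which by commutativity of $L$ and $R$ is equivalent to $(\exp(L - R) - 1)[-,-] = 0$, and using $(L - R)[-,-] = D$ this becomes
\begin{equation*}
    \sum_{k \geq 0} \frac{(L - R)^k D}{(k+1)!} = 0.
\end{equation*}

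The main obstacle is to extract $D = 0$ from this, and this is precisely where Lazardness enters. Evaluated at $(x, y) \in \fa_a \times \fa_b$, each summand $\frac{(L-R)^k D(x,y)}{(k+1)!}$ lies in $\fa_{a+b+k+1}$, and the $P_{a+b+k+1}$-divisibility of that group absorbs every prime divisor of $(k+1)!$, so the denominators are legitimate. I will then argue by induction on $m \geq 1$ that $D \in \Hom_m$: the base $m = 1$ is immediate from the definition of $D$; for the step, if $D \in \Hom_m$ then $(L-R)^k D \in \Hom_{m+k}$, so the tail $\sum_{k \geq 1}$ of the above identity lies in $\Hom_{m+1}$, forcing $D$ itself into $\Hom_{m+1}$. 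Iterating and using Hausdorffness of $\fa$ gives $D = 0$, whence $d \in \Der(\fa) \cap \End(\fa)_i = \Der(\fa)_i$, completing the proof.
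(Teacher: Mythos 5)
Your proof is correct, and the easy inclusion $\exp(\Der(\fa)_i)\subseteq\Aut(\fa)_i$ is handled exactly as in the paper, via the iterated Leibniz rule $d^n[x,y]=\sum_j\binom{n}{j}[d^jx,d^{n-j}y]$. For the converse inclusion the overall strategy is also the same --- show that the derivation defect $D(x,y)=d[x,y]-[dx,y]-[x,dy]$ lies in $\fa_{a+b+m}$ for every $m$ and conclude by Hausdorffness --- but the engine driving the induction step is genuinely different. The paper runs a double induction: assuming $D\in\Hom_k$ it re-derives the Leibniz power rule for $d^n$ only modulo $\fa_{i+j+k+1}$ (an inner induction on $n$), and then compares the resulting expansion of $\exp(d)([a,b])$ with $[\exp(d)(a),\exp(d)(b)]$ term by term to push $D$ one level deeper. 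You instead package the whole comparison into the commuting filtration-raising operators $L$ and $R$ on bilinear maps, observe that the automorphism property of $\phi=\exp(d)$ is exactly $\exp(L)[-,-]=\exp(R)[-,-]$, and extract the exact identity $\sum_{k\geq 0}(L-R)^kD/(k+1)!=0$, from which the step $\Hom_m\Rightarrow\Hom_{m+1}$ is immediate. This buys a cleaner, essentially computation-free induction at the cost of having to justify the formal manipulations ($\exp(-R)\exp(L)=\exp(L-R)$, rearrangement of the double series, and the divisibility of every coefficient that appears); you address the divisibility of the $(k+1)!$ denominators explicitly, and the rearrangements are legitimate in a complete Hausdorff filtered group since all terms of total degree $\geq N$ land in $\fa_{a+b+N}$, which is closed. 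The deduction of the group isomorphism $\Laz(\Der(\fa)^+)\to\Aut(\fa)^+$ from \cref{prop: Lazard in ring} together with the fact that $\Der(\fa)_1$ is a Lie subring (so $\BCH$ stays inside it) matches the paper's one-line conclusion.
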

\begin{proof}
      Clearly $\exp(\End(\fa)_i)= 1+\End(\fa)_i$ for $i\geq 1$. Let $\delta\in \Der(\fa)_1$ and $x,y\in \fa$. By induction one finds that $\delta^n([a,b])=\sum_{j=0}^n\binom{n}{j}[\delta^j(a),\delta^{n-j}(b)]$ for $n\geq 0$, hence
    \begin{align*}
        \exp(\delta)([a,b])&=\sum_{n=0}^\infty\sum_{j=0}^i\frac{1}{n!}\binom{n}{j}[\delta^j(a),\delta^{n-j}(b)]=\sum_{i=0}^\infty\sum_{j=0}^\infty \frac{1}{i!j!}[\delta^j(a),\delta^{i}(b)]\\
        &=[\exp(\delta)(a),\exp(\delta)(b)].
    \end{align*}
    Conversely, let $\delta\in \End(\fa)_1$ such that $\exp(\delta)\in \Aut(\fa)$. We will prove by induction on $k$ that $$\delta([a,b])\in [\delta(a),b]+[a,\delta(b)]+\fa_{i+j+k},$$
    for all $a\in \fa_i$, $b\in \fa_j$ which implies that $\delta([a,b])-[\delta(a),b]-[a,\delta(b)]\in \bigcap_{k=0}^\infty =\{0\}$ and thus $\delta\in \Der(\fa)$. For $k=1$, the statement is trivial. Assume that it holds for some $k\geq 1$. We prove by induction on $n$ that
    \begin{equation}\label{eq: induction n}
        \delta^n([a,b]\in \sum_{j=0}^n\binom{n}{j}[\delta^j(a),\delta^{n-j}(b)]+\fa_{i+j+k+1}
    \end{equation} 
    for $a\in \fa_i$ , $b\in \fa_j$ and $n\geq 2$. By assumption, $$\delta([a,b])\in [\delta(a),b]+[a,\delta(b)]+\fa_{i+j+k},$$ so by applying $\delta$ on both sides we find 
    $$\delta^2([a,b])\in \delta([\delta(a),b])+\delta([a,\delta(b)])+\fa_{i+j+k+1}.$$
    By assumption, 
    \begin{align*}
        \delta([\delta(a),b])\in[\delta^2(a),b]+[\delta(a),\delta(b)]+\fa_{i+j+k+1},\\
        \delta([a,\delta(b)])\in [\delta(a),\delta(b)]+[a,\delta^2(b)]+\fa_{i+j+k+1},
    \end{align*}
    so indeed \eqref{eq: induction n} holds for $n=2$. Similarly it follows that \eqref{eq: induction n} holds for $n\geq 2$. We then use \eqref{eq: induction n} to obtain
    \begin{align*}
        \exp(\delta)([a,b])\in[a,b]+\delta([a,b])+\sum_{n=2}^\infty\sum_{j=0}^n\frac{1}{j!(n-j)!}[\delta^j(a),\delta^{n-j}(b)]+\fa_{i+j+k+1},
    \end{align*}
    while also
    $$[\exp(\delta)(a),\exp(\delta)(b)]=\sum_{i=0}^\infty\sum_{j=0}^\infty \frac{1}{i!j!}[\delta^j(a),\delta^{i}(b)].$$
    Since $\exp(\delta)([a,b])=[\exp(\delta)(a),\exp(\delta)(b)]$ we find $\delta([a,b])\in [\delta(a),b]+[a,\delta(b)]+\fa_{i+j+k+1}$. We conclude that indeed $\exp(\Der(\fa)_i)=\Aut(\fa)_i$. The last part of the statement now follows from \cref{prop: Lazard in ring}.
\end{proof}
\begin{lemma}
    Let $\fa$ be a Lazard Lie ring, then $\Aut(\fa)=\Aut(\Laz(\fa))$ as filtered groups. 
\end{lemma}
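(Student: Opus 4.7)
The plan is to split the argument into two parts: first, establish that $\Aut(\fa)$ and $\Aut(\Laz(\fa))$ coincide as abstract groups acting on the common underlying set $\fa$; second, verify that the two filtrations agree.

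For the first part, I would invoke functoriality of the Lazard correspondence (\cref{theorem: Lazard}). If $f\colon \fa\to\fa$ is a filtered Lie ring automorphism, then $f$ respects each truncation $\BCH_k(x,y)$, since $\BCH_k$ is a $\Q$-linear combination of iterated brackets and the relevant denominators act on $\fa$ (they do because $\fa$ is Lazard). Completeness of $\fa$ together with the fact that $f$ respects the filtration (hence is continuous) gives $f(\BCH(a,b))=\BCH(f(a),f(b))$, so $f\in\Aut(\Laz(\fa))$. Conversely, a filtered group automorphism of $\Laz(\fa)$ respects the inverse BCH formulae $P$ and $Q$, and these recover $+$ and $[-,-]$ by \cref{theorem: Lazard}; hence $f$ is a Lie ring automorphism. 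Crucially, preserving the filtration means the same thing on both sides because the two filtrations are the same subsets of $\fa$.

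For the second part, one must compare the two filtrations on the common underlying group. Using the convention (implicit in the paper, compatible with \cref{lem: lazard derivations and auto}) that $\Aut(\fa)_i$ for $i\geq 1$ consists of automorphisms $f$ with $f-\id\in\End(\fa)_i$, and noting that the inverse of $g$ in $\Laz(\fa)$ is $-g$, the two filtrations unravel as
\begin{align*}
    f\in\Aut(\fa)_i &\iff f(a)-a\in \fa_{i+j}\text{ for all }a\in\fa_j,\\
    f\in\Aut(\Laz(\fa))_i &\iff \BCH(f(a),-a)\in \fa_{i+j}\text{ for all }a\in\fa_j.
\end{align*}
To show these are equivalent I would expand the BCH series. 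If $c:=f(a)-a\in\fa_{i+j}$, write $\BCH(f(a),-a)=\BCH(a+c,-a)$; since $\BCH(a,-a)=0$, every surviving term in the expansion involves at least one factor of $c$, and such a term lies in $\fa_{i+j+k}\subseteq \fa_{i+j}$ by compatibility of brackets with the filtration. Conversely, if $d:=\BCH(f(a),-a)\in\fa_{i+j}$, then $f(a)=\BCH(d,a)$, so $f(a)-a=d+\tfrac{1}{2}[d,a]+\cdots$ is a sum of terms each involving at least one factor of $d$, and the same filtration argument gives $f(a)-a\in\fa_{i+j}$.

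The only real obstacle is notational bookkeeping: one must track the bracket terms carefully to confirm that the residual terms in the BCH expansion indeed all land in the intended filtration level, and verify the convergence of the series in the complete setting. Both are routine because the filtration is descending and compatible with brackets, and no new divisibility issue arises beyond what is already part of the Lazard framework.
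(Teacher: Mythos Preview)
Your proposal is correct and follows essentially the same approach as the paper: both invoke the functoriality of $\Laz$ and $\Laz^{-1}$ for the coincidence of the underlying automorphism groups, and both reduce the comparison of filtrations to the equivalence $f(a)-a\in\fa_{i+j}\iff\BCH(f(a),-a)\in\fa_{i+j}$. The only stylistic difference is that the paper packages the second step as the general statement ``$x+y\in\fa_i\iff\BCH(x,y)\in\fa_i$'' via the identity $[x,y]=[x+y,y]\in\fa_{i+1}$, whereas you argue directly by bilinear expansion of $\BCH(a+c,-a)$; the content is the same.
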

\begin{proof}
    We know by \cref{theorem: Lazard} that automorphisms of $\fa$ and $\Laz(\fa)$ coincide. It remains to verify that the filtrations coincide. Let $x,y\in \fa$ such that $x+y\in \fa_i\setminus\fa_{i+1}$, then $$[x,y]+\fa_{i+1}=[x,y]-[x+y,y]+\fa_{i+1}=\fa_{i+1},$$
    hence $x+y+\fa_{i+1}=\BCH(x,y)+\fa_{i+1}$ and thus $\BCH(x,y)\in \fa_{i}\setminus \fa_{i+1}$. We conclude that $x+y\in \fa_i$ if and only if $\BCH(x,y)\in \fa_i$. In particular, for $\Aut(\fa)$ and $x\in \fa_i$ we find $\BCH(\phi(x),-x)\in \fa_{i+j}$ if and only if $\phi(x)-x\in \fa_{i+j}$. We conclude that indeed $\Aut(\fa)_i=\Aut(\Laz(\fa))_i$.
\end{proof}

\begin{proposition}\label{prop: gamma}
    Let $\fa,\fg$ be Lazard Lie rings and $\delta:\fg\to \Der(\fa)$ a homomorphism of filtered Lie rings. Then there is a unique filtered group isomorphism $$\gamma:\Laz(\fa\oplus_\delta \fg)\to \Laz(\fa)\rtimes_\lambda \Laz(\fg),$$ such that $\gamma(a,0)=(a,0)$ and $\gamma(0,x)=(0,x)$, where $\lambda:\Laz(\fg)\to \Aut(\Laz(\fa))$ is given by $\lambda_x=\exp(\delta_x)$.
\end{proposition}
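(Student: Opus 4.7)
The plan is to realise $\Laz(\fa \oplus_\delta \fg)$ as a split extension of $\Laz(\fg)$ by $\Laz(\fa)$, compute the resulting conjugation action, and identify it with $\lambda$. The desired map $\gamma$ is then the canonical identification of a split extension with the corresponding semi-direct product.

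First, I verify that $\lambda$ is a well-defined filtered group homomorphism $\Laz(\fg) \to \Aut(\Laz(\fa))$. Since $\fg$ is Lazard, $\fg = \fg_1$, and since $\delta$ is filtered, $\delta(\fg) \subseteq \Der(\fa)_1$, so each $\exp(\delta_x)$ is well-defined. Viewing $\delta$ as a filtered Lie ring homomorphism $\fg \to \Der(\fa)^+$, Theorem~\ref{theorem: Lazard} gives a filtered group homomorphism $\Laz(\fg) \to \Laz(\Der(\fa)^+)$, and composing with the filtered group isomorphism $\exp \colon \Laz(\Der(\fa)^+) \to \Aut(\fa)^+$ from Lemma~\ref{lem: lazard derivations and auto} (together with the identification $\Aut(\fa) = \Aut(\Laz(\fa))$) produces $\lambda$. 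Next, the two natural inclusions $\iota_\fa \colon \fa \hookrightarrow \fa \oplus_\delta \fg$ and $\iota_\fg \colon \fg \hookrightarrow \fa \oplus_\delta \fg$ are filtered Lie ring homomorphisms, $\iota_\fa(\fa)$ is an ideal, and the projection $\pi_\fg \colon \fa \oplus_\delta \fg \to \fg$ is a filtered retraction of $\iota_\fg$ with kernel $\iota_\fa(\fa)$. Applying $\Laz$ yields a split short exact sequence of filtered groups
\begin{equation*}
    1 \to \Laz(\fa) \xrightarrow{\iota_\fa} \Laz(\fa \oplus_\delta \fg) \xrightarrow{\pi_\fg} \Laz(\fg) \to 1,
\end{equation*}
so every element of $\Laz(\fa \oplus_\delta \fg)$ decomposes uniquely as $\iota_\fa(a) \cdot \iota_\fg(x)$ with $x = \pi_\fg(k)$.

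It remains to identify the conjugation action of $\iota_\fg(x)$ on $\iota_\fa(a)$ with $\iota_\fa(\lambda_x(a))$. By Lemma~\ref{lem: conjugation BCH} applied in $\fa \oplus_\delta \fg$,
\begin{equation*}
    \BCH\bigl((0, x), \BCH((a, 0), -(0, x))\bigr) = \exp\bigl(\ad_{(0, x)}\bigr)((a, 0)),
\end{equation*}
and since $\ad_{(0, x)}^n((a, 0)) = (\delta_x^n(a), 0)$, the right-hand side equals $(\exp(\delta_x)(a), 0) = \iota_\fa(\lambda_x(a))$, as required. The standard split-extension identification then produces the isomorphism $\gamma$, and uniqueness with the prescribed values on $\iota_\fa$ and $\iota_\fg$ follows because any such $\gamma$ is forced on $\iota_\fa(\Laz(\fa)) \cdot \iota_\fg(\Laz(\fg)) = \Laz(\fa \oplus_\delta \fg)$. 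Nothing is conceptually hard here; the main care needed is in the filtration bookkeeping, in particular verifying that $\exp(\delta_x) \in \Aut(\Laz(\fa))_i$ when $x \in \fg_i$ (which follows from $\delta_x \in \Der(\fa)_i$ together with closure of $\End(\fa)_i$ under powers), and that the identification $\Laz(\fa \oplus_\delta \fg)_i = \iota_\fa(\Laz(\fa)_i) \cdot \iota_\fg(\Laz(\fg)_i)$ holds, which reduces to $\iota_\fa(\fa) \cap (\fa \oplus_\delta \fg)_i = \iota_\fa(\fa_i)$.
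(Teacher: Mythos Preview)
Your proof is correct and follows essentially the same approach as the paper: apply $\Laz$ to the split short exact sequence $0 \to \fa \to \fa \oplus_\delta \fg \rightleftarrows \fg \to 0$ and then invoke Lemma~\ref{lem: conjugation BCH} to identify the resulting conjugation action with $\lambda_x = \exp(\delta_x)$. The paper's proof is considerably terser, omitting the verification that $\lambda$ is a well-defined filtered homomorphism and the filtration bookkeeping you spell out, but the underlying argument is identical.
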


\begin{proof}
    Consider the split short exact sequence of Lazard Lie rings
\[
\begin{tikzcd}
0 \arrow[r] & \fa \arrow[r] & \fa\oplus_\delta \fg \arrow[r, shift right] & \fg \arrow[r] \arrow[l, shift right] & 0 \arrow[l, phantom, bend right].
\end{tikzcd}\]
    The functoriality of $\Laz$ then yields a short exact sequence of Lazard groups
\[
\begin{tikzcd}
1 \arrow[r] & \Laz(\fa) \arrow[r] & \Laz(\fa\oplus_\delta \fg) \arrow[r, shift right] & \Laz(\fg) \arrow[r] \arrow[l, shift right] & 1 \arrow[l, phantom, bend right].
\end{tikzcd}\]
    Together with \cref{lem: conjugation BCH} this proves the statement.
\end{proof}
We continue with the same setting as \cref{prop: gamma}. Since $\fa\times \{0\}$ is a normal subgroup of both $\Laz(\fa\oplus_\delta \fg)$ and $\Laz(\fa)\rtimes_\lambda \Laz(\fg)$ and $\gamma(\fa\times \{0\})=\fa\times \{0\}$, we find that $\gamma$ does not affect the second component.  Define $V:\fa\times \fg\to \fa$ as $\pr_\fa\gamma$, thus $\gamma(a,x)=(V(a,x),x)$. Then in $\Laz(\fa)\rtimes_\lambda \Laz(\fg)$ we find
$$(V(a,x),0)=(V(a,x),x)(0,x)^{-1}=\gamma(a,x)\gamma(0,x)^{-1}=\gamma(\BCH((a,x),(0,-x))),$$
from which we conclude that $V(a,x)=\pr_\fa \BCH((a,x),(0,-x))$. From \eqref{eq: BCH} we find
\begin{equation}\label{eq: formula V}
    V(a,x)=a+\frac{1}{2}\delta_x(a)+\frac{1}{6}\delta^2_x(a)+\frac{1}{12}[a,\delta_x(a)]+\frac{1}{24}(\delta_x([a,\delta_x(a)])+\delta^3_x(a))+...
\end{equation}
    where further terms are of order at least 5.

    Similarly, we define $U:\fa\times \fg\to \fa$ as $U=\pr_\fa\gamma^{-1}$ hence $\gamma^{-1}(a,x)=(U(a,x),x)$. We find $U(a,x)=\pr_\fa(P((a,\lambda),(0,\lambda^{-1})))$ with $P$ as in \eqref{eq: inverse BCH 1}. Explicitly the terms of order less than 5 are given by 
    \begin{align}
        U(a,\lambda)=a&(a\lambda^{-1}(a)^{-1})^{-\frac{1}{2}}(a\lambda(a)a^{-2}\lambda^{-1}(a)a^{-1})^{\frac{1}{12}}\label{eq: formula U}\\
        &(a\lambda(a)\lambda^2(a)\lambda(a)^{-2}a\lambda(a)^{-1}\lambda^{-1}(a)^{-1}a^2\lambda(a^{-1})a^{-1})^{-\frac{1}{24}}\nonumber\\
        &(\lambda^2(a)\lambda(a)^{-1}a\lambda(a)^{-1}a\lambda^{-1}(a)^{-1}a\lambda(a)^{-1})^{\frac{1}{24}}\cdots\nonumber
    \end{align}
In the case that $\fa$ is abelian we have more explicit formulae.
\begin{lemma}\label{lem: W abelian}
     Let $\fa,\fg$ be Lazard Lie rings and $\delta:\fg\to \Der(\fa)$ a homomorphism of filtered Lie rings. Let $V,U:\fa\times \fg\to \fa$ be as defined above. If $\fa$ is abelian, then 
     \begin{align}
        V(a,x)&=\sum_{k=1}^\infty \frac{1}{k!}\delta_x^{k-1}(a),\\
        U(a,x)&=\sum_{k=2}^\infty \frac{1}{k}(-1)^{k+1}(\lambda_x-\id)^{k-1}(a).
     \end{align}
     
\end{lemma}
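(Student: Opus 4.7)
The plan is to reduce both BCH products that define $V$ and $U$ to explicit manipulations of power series in an associative ring, by representing $\fa \oplus_\delta \fg$ on the $\Z$-module $M := \fa \oplus \Z$. Concretely, I would define $\rho \colon \fa \oplus_\delta \fg \to \End_\Z(M)$ by $\rho(b, y)(c, r) := (\delta_y(c) + rb, 0)$. A direct computation, using abelianness of $\fa$ at the one point where a term $[b, b']$ would otherwise appear, confirms that $\rho$ is a Lie ring homomorphism. Filtering $M$ by $M_0 = M$ and $M_i = \fa_i \oplus \{0\}$ for $i \geq 1$ promotes $\rho$ to a filtered Lie ring homomorphism into $\End_\Z(M)$. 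Since both $\fa$ and $\fg$ coincide with the first term of their filtration, $\rho$ lands in $\End_\Z(M)_1$, so the exponential and logarithmic series converge on the relevant elements, and \cref{prop: Lazard in ring} applied to the filtered subring generated by $\rho(\fa \oplus_\delta \fg)$ guarantees $\rho(\BCH(P, Q)) = \log(\exp(\rho(P))\exp(\rho(Q)))$ in $\End_\Z(M)$.

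The exponentials are then short to compute: a quick induction shows $\rho(a, x)^n(b, r) = (\delta_x^n(b) + r \delta_x^{n-1}(a), 0)$ for $n \geq 1$, yielding
\begin{align*}
\exp(\rho(a, x))(b, r) &= (\lambda_x(b) + r V_0, r), \quad V_0 := \textstyle\sum_{k=1}^\infty \tfrac{\delta_x^{k-1}(a)}{k!},\\
\exp(\rho(a, 0))(b, r) &= (b + ra, r),\quad \exp(\rho(0, \pm x))(b, r) = (\lambda_x^{\pm 1}(b), r).
\end{align*}

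For $V(a, x)$, I would compose the first of the above with the minus-sign case of the third to obtain $\exp(\rho(a, x))\exp(\rho(0, -x))(b, r) = (b + r V_0, r) = (\id + T)(b, r)$, where $T(b, r) = (r V_0, 0)$ satisfies $T^2 = 0$. Hence $\log(\id + T) = T$, and comparing with $\rho(\BCH((a, x), (0, -x))) = \rho(V(a, x), 0)$ at the input $(0, 1)$ yields $V(a, x) = V_0$, matching the claimed formula. For $U(a, x)$, I would compose instead $\exp(\rho(a, 0))\exp(\rho(0, x))(b, r) = (\lambda_x(b) + ra, r) = (\id + S)(b, r)$ with $S(b, r) = ((\lambda_x - \id)(b) + ra, 0)$. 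An easy induction gives $S^n(b, r) = ((\lambda_x - \id)^n(b) + r(\lambda_x - \id)^{n-1}(a), 0)$ for $n \geq 1$, so that $\log(\id + S) = \sum_{n \geq 1}(-1)^{n+1} S^n/n$ evaluated at $(0, 1)$ produces the series $\sum_{n \geq 1}\tfrac{(-1)^{n+1}}{n}(\lambda_x - \id)^{n-1}(a)$ in the first component, as stated.

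The main technical hurdle is the claim in the first paragraph that $\rho(\BCH(P, Q)) = \log(\exp(\rho(P))\exp(\rho(Q)))$, i.e.\ that the Lazard calculus can indeed be transported across $\rho$; once this is justified, every remaining step is a routine power-series verification in $\End_\Z(M)$.
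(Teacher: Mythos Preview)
Your approach is essentially identical to the paper's: both represent $\fa\oplus_\delta\fg$ on $M=\Z\oplus\fa$ via the same map $\rho$, invoke \cref{prop: Lazard in ring} to transport the BCH product to $\exp$/$\log$ in $\End(M)^+$, and then read off $V$ and $U$ by evaluating at $(1,0)$. The only cosmetic difference is that for $V$ you take a logarithm (using $T^2=0$) where the paper evaluates directly; note also that your computation for $U$ correctly yields the sum from $n=1$, in agreement with the paper's proof---the lower index $k=2$ in the displayed statement is a typo.
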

\begin{proof}
     Consider the abelian group $M=\Z\oplus \fa$ with filtration $M_i=0\oplus \fa_i$ for $i\geq 1$. Let $R=\End(M)^+$ and define 
     $$\rho:\fa\oplus_\delta \fg\to R:(a,x)\mapsto \rho_{(a,x)},$$
     as $\rho_{(a,x)}(n,b)=(0,na+\delta_x(b))$. Then $\rho$ is a Lie ring homomorphism, indeed for $a,b,c\in \fa$, $x,y\in \fg$, $n\in \Z$, we have
     \begin{align*}
         (\rho_{(a,x)}\rho_{(b,y)}-\rho_{(b,y)}\rho_{(a,x)})(n,c)&=\rho_{(a,x)}(0,nb+\delta_y(c))-\rho_{(b,y)}(0,na+\delta_x(c))\\
         &=(0,\delta_x(nb)+\delta_x\delta_y(c)-\delta_y(na)-\delta_y\delta_x(c))\\
         &=(0,n(\delta_x(b)-\delta_y(a))+\delta_{[x,y]}(c))\\
         &=\rho_{(\delta_x(b)-\delta_y(a),[x,y])}\\
         &=\rho_{[(a,x),(b,y)]}.
     \end{align*}
     By \cref{prop: Lazard in ring} we find that $$\kappa:\Laz(\aff(\fa)^+)\to \id_M+R:(a,x)\mapsto \kappa_{(a,x)},$$ with $\kappa_{(a,x)}=\exp(\rho_{(a,x)})$ is a group homomorphism. It also follows directly from the definition of $\kappa$ that $$\kappa_{(a,x)}(1,0)=(1,\sum_{k=1}^\infty \frac{1}{k!}\delta_x^{k-1}(a)),$$ 
     so in particular $\kappa_{(a,0)}(1,0)=(1,a)$ and $\kappa_{(0,x)}(1,0)=(1,0)$.
     We now obtain \eqref{eq: formula V} from
     \begin{align*}
         (1,V(a,\delta))&=\kappa_{(V(a,x),0)}(1,0)\\
         &=\kappa_{\BCH((a,x),(0,-x))}(1,0)\\
         &=\kappa_{(a,x)}\kappa_{(0,-x)}(1,0)\\
         &=(1,\sum_{k=1}^\infty \frac{1}{k!}\delta_x^{k-1}(a)).
     \end{align*}
    Next we prove \eqref{eq: formula U}. Since $\kappa$ is a group homomorphism, so is $f=\kappa\gamma^{-1}$. Thus $\kappa_{(U(a,x),x)}=f(a,x)=f(a,0)f(0,\lambda)=\kappa_{(a,0)}\kappa_{(0,x)}$ and therefore $$(0,U(a,x))=\rho_{(U(a,x),x)}(1,0)=\log(\kappa_{(a,0)}\kappa_{(0,x)})(1,0).$$ 
    We obtain \eqref{eq: formula U} since $(\kappa_{(a,0)}\kappa_{(0,x)}-\id)^{k}(1,0)=(0,(\lambda_x-\id)^{k-1}(a))$ for $k\geq 1$.
\end{proof}

We now consider a particular case of the above. Let $\fa$ be a Lazard Lie ring, then by \cref{prop: gamma} we have a Lazard group isomorphism $\gamma: \Laz(\aff(\fa)^+)\to \Laz(\fa)\rtimes \Laz(\Der(\fa)^+)$. Identifying $\Laz(\Der(\fa)^+)$ with $\Aut(\fa)^+$ as in \cref{lem: lazard derivations and auto} and $\Aut(\fa)^+$ with $\Aut(\Laz(\fa))^+$ we find an isomorphism 
$$\gamma:\Laz(\aff(\fa)^+)\to \Hol(\Laz(\fa))^+,$$
such that $\gamma(a,0)=(a,0)$ and $\gamma(0,\delta)=(0,\exp(\delta))$.

\begin{proposition}\label{prop: correspondence gamma}
    Let $\fa$ be a Lazard Lie ring. Then $\gamma$ yields a bijective correspondence between Lazard sub Lie rings $\fg$ of $\mathfrak{aff}(\fa)^+$ and Lazard subgroups $G$ of $\Hol(\Laz(\fa))^+$. Moreover, $\pr_\fa: \fg\to \fa$ is injective, respectively surjective, if and only if $\pr_\fa:\gamma(\fg)\to \fa$ is injective, respectively surjective.
\end{proposition}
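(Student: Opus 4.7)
My plan is to handle the main bijective correspondence and then the ``moreover'' statement, splitting the latter into injectivity and surjectivity. For the bijection, I would compose two known bijections: the Lazard correspondence (the corollary after \cref{theorem: Lazard}) applied to $\aff(\fa)^+$, which matches Lazard sub Lie rings with Lazard subgroups of $\Laz(\aff(\fa)^+)$; and the filtered group isomorphism $\gamma:\Laz(\aff(\fa)^+)\to\Hol(\Laz(\fa))^+$ constructed in the discussion following \cref{prop: gamma}, which transfers Lazard subgroups bijectively. Their composition gives the desired correspondence.

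For injectivity of $\pr_\fa$, the crucial observation is that $\gamma(0,\delta)=(0,\exp(\delta))$, so $\gamma$ restricts to a bijection between $\{0\}\times\Der(\fa)^+$ and $\{0\}\times\Aut(\Laz(\fa))^+$, which are precisely the kernels of $\pr_\fa$ on the Lie and group sides. On the Lie side, $\pr_\fa$ is an additive homomorphism, so $\pr_\fa|_\fg$ is injective iff $\fg\cap(\{0\}\times\Der(\fa)^+)=0$. On the group side, a short computation in the semi-direct product gives $(a,\phi)^{-1}(a,\psi)=(0,\phi^{-1}\psi)$, from which $\pr_\fa|_{\gamma(\fg)}$ is injective iff $\gamma(\fg)\cap(\{0\}\times\Aut(\Laz(\fa))^+)=\{1\}$. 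The restricted bijection provided by $\gamma$ on the kernels then yields the equivalence.

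Surjectivity is the step I expect to be the main obstacle. Using \eqref{eq: formula V}, every term of $V(a,\delta)$ after the leading $a$ involves at least one application of $\delta\in\Der(\fa)_i$ to an element of $\fa_i$ or to an iterated bracket thereof, so $V(a,\delta)\equiv a\pmod{\fa_{i+1}}$ whenever $(a,\delta)\in\aff(\fa)^+_i$. Hence both $\pr_\fa$ and $\pr_\fa\circ\gamma$ induce the same first-coordinate projection on each graded quotient $\aff(\fa)^+_i/\aff(\fa)^+_{i+1}\to\fa_i/\fa_{i+1}$. I would then argue, via the completeness of $\fg$ and of $\gamma(\fg)$ combined with a successive-approximation Cauchy-sequence construction, that surjectivity of $\pr_\fa$ onto $\fa$ is equivalent on either side to graded surjectivity at every level: given $a\in\fa$, one inductively produces elements of $\fg_i$ (respectively $\gamma(\fg)_i$) whose images under $\pr_\fa$ correct $a$ modulo $\fa_{i+1}$, and the resulting Cauchy sequence converges in the ambient complete structure. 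Since the graded conditions agree for $\fg$ and $\gamma(\fg)$, so do the global ones. The technical subtlety lies in the multiplicative lifting step on the group side, where $\pr_\fa$ is only a set-theoretic projection in the semi-direct product and not a group homomorphism.
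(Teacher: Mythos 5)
Your treatment of the bijective correspondence and of the injectivity claim matches the paper's proof: the former is the composition of \cref{theorem: Lazard} with \cref{prop: gamma}, and the latter is precisely the paper's argument that a failure of injectivity on either side produces an element $(0,\delta)\in\fg$, respectively $(0,\exp(\delta))\in\gamma(\fg)$, and that $\gamma$ exchanges these.

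The surjectivity step, however, contains a genuine gap, and it is not the one you flagged. You propose to show that surjectivity of $\pr_\fa$ on either side is equivalent to graded surjectivity at every level, where the lift of an element of $\fa_i$ is required to lie in $\fg_i=\fg\cap\aff(\fa)^+_i$ (so that the successive corrections form a Cauchy sequence). But surjectivity of $\pr_\fa:\fg\to\fa$ only yields, for $c\in\fa_i$, an element $(c,\delta')\in\fg$ with $\delta'\in\Der(\fa)_1$; nothing forces $\delta'\in\Der(\fa)_i$, so this element need not lie in $\fg_i$. This is not a removable technicality: for a $t$-bijective $\fg$ arising from a post-Lie structure, the image of $\fg_i$ in $\fa_i/\fa_{i+1}$ is governed by $\fa^\circ_i=\{a\in\fa_i\mid \cL_a\in\Der(\fa)_i\}$ (\cref{cor: adjoint lie structure}), which can be a proper subgroup of $\fa_i$ modulo $\fa_{i+1}$ even though $\pr_\fa$ is bijective; so the implication from surjectivity to your graded condition fails. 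Without membership in $\fg_i$, the derivation (resp.\ automorphism) components of your correcting elements are uncontrolled and the partial sums/products are not Cauchy in $\aff(\fa)^+$ (resp.\ $\Hol(\Laz(\fa))^+$). The paper avoids the infinite process altogether: it uses that $V(c,\delta')\equiv c\pmod{\fa_{d+1}}$ already for $c\in\fa_d$ and $\delta'$ merely in $\Der(\fa)_1$, notes that the image $V(\fg)$ (resp.\ $\pr_\fa(\fg)$) is closed, picks a hypothetical $a$ outside the image at maximal depth $d$, and performs a \emph{single} correction inside $\fg$ via $\BCH((b,\delta),(-V(b,\delta)+a,\delta'))$ with an arbitrary lift supplied by surjectivity, contradicting the maximality of $d$. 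Your argument would need to be restructured along these lines, or you would have to justify convergence despite the second coordinates of your corrections not tending to the identity.
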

\begin{proof}
    The first part follows directly from \cref{theorem: Lazard} and \cref{prop: gamma}.
    
    Assume that $\fg$ is a Lazard sub Lie ring of $\mathfrak{aff}(\fa)$ such that $\pr_\fa:\fg \to \fa$ is not injective. Then there exists some non-zero element $(0,\delta)\in \fg$. But then clearly $\gamma(0,\delta)=(0,\exp(\delta))\in \gamma(\fg)$ and thus $\pr_\fa:\gamma(\fg)\to A$ is not injective. 

    Conversely, assume that there exist $(a,\lambda),(a,\lambda')\in \gamma(\fg)$ with $\lambda\neq \lambda'$. Then $(a,\lambda)^{-1}(a,\lambda')=(0,\lambda^{-1}\lambda')\in \gamma(\fg)$.
    Hence $\gamma^{-1}(0,\lambda^{-1}\lambda')=(0,\log(\lambda^{-1}\lambda'))\in \fg$ and thus $\pr_\fa:\fg\to \fa$ is not injective.

    Assume that $\pr_\fa: \fg\to \fa$ is surjective. Since $\pr_A$ and $\gamma$ are continuous, the set $\pr_\fa\gamma(\fg)=V(\fg)$ is closed in $\fa$. Assume that there exists some $a\in \fa\setminus V(\fg)$, then there exists a $d$ such that $V(\fg)\cap(a+\fa_d)\neq \varnothing$ but $X\cap(a+\fa_{d+1})= \varnothing$. Let $(b,\delta)\in \fg$ such that $V(b,\delta)\in a+\fa_d$. By assumption there exists some $\delta'\in \Der(\fa)^+$ such that $(-V(b,\delta)+a,\delta')\in \fg$. Since $-V(b,\delta)+a\in \fa_{d}$, we have
    \begin{equation}
    \begin{aligned}
        \label{eq: V equal id mod}
        -V(b,\delta)+a+\fa_{d+1}&=V(-V(b,\delta)+a,\delta')+\fa_{d+1}\\
        &=\exp(\delta)(V(-V(b,\delta)+a,\delta'))+\fa_{d+1}.
    \end{aligned}
    \end{equation}
    Hence  
    \begin{align*}
    V(\BCH((b,\delta),(-V(b,\delta)+a,&\delta')))+\fa_{d+1}\\
    &=\BCH(V(b,\delta),\exp(\delta)V(-V(b,\delta)+a,\delta'))+\fa_{d+1}\\
    &=\BCH(V(b,\delta),-V(b,\delta)+a)+\fa_{d+1}\\
    &=a+\fa_{d+1},
    \end{align*} 
    where the first equality follows from the fact that $\gamma$ is a group homomorphism. We conclude that $V(\fg)\cap (a+\fa_{d+1})\neq \varnothing$ which is a contradiction.

    Conversely, assume that $V(\fg)=\fa$. Clearly $\pr_\fa(\fg)$ is closed in $\fa$ because $\pr_\fa$ is continuous. Assume that there exists some $a\in \fa\setminus \pr_\fa(\fg)$ and let $d$ be such that $\pr_\fa(\fg)\cap(a+\fa_d)\neq \varnothing$ but $\pr_\fa(\fg)\cap (a+\fa_{d+1})=\varnothing$. Choose $(b,\delta)\in \fg$ such that $b\in a+\fa_d$. By assumption there exists some $(b',\delta')\in \fg$ such that $V(b',\delta')=a-b$ and using \eqref{eq: V equal id mod} we find $a-b=V(b',\delta')=b'$. We thus obtain the contradiction
    \[
    \pr_{\fa}((b,\delta)+(b',\delta'))+\fa_{d+1}=a+\fa_{d+1}.\qedhere\]
\end{proof}
\begin{remark}
    Transitive subgroups of the holomorph of a finite group play an essential role in the theory of Hopf--Galois structures, see for example \cite{Childs1989,Martin-Lyons2024SkewBracoids,Stefanello2023OnBraces}. One can prove that for any finite $p$-group $A$ there exists a filtration of $A$ such that $\Hol(A)^+$ is a Sylow $p$-subgroup of the usual holomorph of $A$ (just considered as a group without a filtration). If $|A|=p^k$ for $k<p$, then such a filtration has length at most $p$, so $A$ is a Lazard group and so is any subgroup of $\Hol(A)^+$. Hence in such a case, it follows from \cref{prop: correspondence gamma} that we can classify all regular $p$-groups of the holomorph of $A$ up to conjugation by classifying the $t$-surjective sub Lie rings of $\aff(\Laz^{-1}(A))^+$.
\end{remark}

\begin{proposition}\label{prop: group of flows}
    Let $(\fa,\tr)$ be a Lazard post-Lie ring. Then 
    \[W:\fa\to \fa:a\mapsto V(a,\cL_a)\]
    is a bijective map and $\bS(\fa,\tr)=(\fa,\cdot,\circ)$ with $a\cdot b=\BCH(a,b)$, $a\circ b=a\cdot \exp(\mathcal{L}_{\Omega(a)})(b)$ is a Lazard skew brace, where $\Omega:=W^{-1}$. Moreover, $W:\Laz(\fa^\circ)\to (\fa,\circ)$ is an isomorphism of filtered groups.

    If $\fa$ is abelian, then $$W(a)=\sum_{k=1}^\infty \frac{1}{k!}\cL_a^{k-1}(a).$$
\end{proposition}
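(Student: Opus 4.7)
The plan is to run the correspondences of \cref{prop: correspondence postLie subLie} and \cref{prop: correspondence gamma} in sequence and then read off a skew brace from the resulting regular subgroup via \cref{prop: correspondence skew braces subgroups}. First I would translate $(\fa,\tr)$ into the $t$-bijective sub Lie ring $\fg=\{(a,\cL_a)\mid a\in\fa\}$ of $\aff(\fa)$ provided by \cref{prop: correspondence postLie subLie}. The bijection $\pr_\fa\colon\fg\to\fa^\circ$ of \cref{cor: adjoint lie structure} is an isomorphism of filtered Lie rings, so the hypothesis that $\fa^\circ$ is Lazard forces $\fg$ to be Lazard; in particular $\fa=\fa_1$ and $\fa^\circ=\fa^\circ_1$, which together place $\fg$ inside $\aff(\fa)^+$.

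Applying \cref{prop: correspondence gamma} to $\fg$, the subgroup $G:=\gamma(\fg)\leq\Hol(\Laz(\fa))^+$ is Lazard and $\pr_\fa\colon G\to\fa$ is bijective, so $G$ is regular. By \cref{prop: correspondence skew braces subgroups}, $G$ thus determines a skew brace on $\fa$ whose multiplicative group is $\Laz(\fa)=(\fa,\BCH)$ and whose circle group $(\fa,\circ)$ is canonically isomorphic to $G$ via $\pr_\fa$. Since $G$ is Lazard and its filtration coincides with the one placed on $(\fa,\circ)$ by \cref{cor: filtration on adjoint group}, $\bS(\fa,\tr)$ is a Lazard skew brace.

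Next I would unpack the explicit form of $\gamma$: for any $b\in\fa$,
\[\gamma(b,\cL_b)=(V(b,\cL_b),\exp(\cL_b))=(W(b),\exp(\cL_b))\in G.\]
Because $\pr_\fa\colon G\to\fa$ is bijective and factors as $\fa\xrightarrow{a\mapsto(a,\cL_a)}\fg\xrightarrow{\gamma}G\xrightarrow{\pr_\fa}\fa$ --- a composition of three bijections --- the map $W$ must itself be bijective, and setting $\Omega:=W^{-1}$ the unique element of $G$ with first coordinate $a$ is $(a,\exp(\cL_{\Omega(a)}))$. Reading off the $\lambda$-maps gives $\lambda_a=\exp(\cL_{\Omega(a)})$, and hence $a\circ b=a\cdot\exp(\cL_{\Omega(a)})(b)$. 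The same three-step factorization, now viewed as a chain of filtered group isomorphisms $\Laz(\fa^\circ)\cong\Laz(\fg)\xrightarrow{\gamma}G\xrightarrow{\pr_\fa}(\fa,\circ)$, shows that $W$ is an isomorphism of filtered groups. The closed-form in the abelian case is immediate from \cref{lem: W abelian}, since the derivation coordinate of $(a,\cL_a)\in\fg$ is $\cL_a$ itself.

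The conceptual content of the argument is already concentrated in \cref{prop: correspondence gamma}, so the only thing that truly needs care is the alignment of filtrations at each stage; this has all been prepared by \cref{cor: adjoint lie structure}, \cref{prop: correspondence gamma}, and \cref{cor: filtration on adjoint group}, so once the chain of correspondences is set up the remainder is bookkeeping and direct computation with the definition of $\gamma$.
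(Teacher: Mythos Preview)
Your proposal is correct and follows essentially the same route as the paper: build the $t$-bijective sub Lie ring $\fg$ via \cref{prop: correspondence postLie subLie}, push it through $\gamma$ using \cref{prop: correspondence gamma} to obtain a regular Lazard subgroup $G$, and read off the skew brace and the factorization of $W$ as the composite $\fa^\circ\to\fg\to G\to(\fa,\circ)$. You are in fact a bit more explicit than the paper about why $\fg$ is Lazard and lands in $\aff(\fa)^+$ (the paper simply asserts this), and about aligning the filtrations via \cref{cor: filtration on adjoint group}; otherwise the arguments coincide.
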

\begin{proof}
    Let $(\fa,\tr)$ be a Lazard post-Lie ring and let $\fg$ be the regular Lazard sub Lie ring of $\aff(\fa)^+$ associated to $(\fa,\tr)$ as in \cref{prop: correspondence postLie subLie}. Then $W$ is a bijection since it is the composition of the bijections $\fa\to \fg:a\mapsto (a,\mathcal{L}_a)$ and $V:\fg\to \fa$, recall that the latter is a bijection by \cref{prop: correspondence gamma}. In the case that $\fa$ is abelian, the explicit formula of $W$ follows from \cref{lem: W abelian}.
    
    By \cref{prop: correspondence gamma} we find that $\gamma(\fg)$ is a regular Lazard subgroup of $\Hol(\Laz(\fa))^+$, thus we obtain a Lazard skew brace $(\fa,\cdot,\circ)$ where $(\fa,\cdot)=\Laz(\fa)$. Also $a\circ b=a\cdot \lambda_a(b)$ where $\lambda_a\in \Aut(\fa)$ is uniquely determined by the fact that $(a,\lambda_a)\in \gamma(\fg)$. Since
    $\gamma(\fa)=\{(W(a),\exp(\mathcal{L}_a))\mid a\in \fa\}$
    we find $\lambda_a=\exp(\mathcal{L}_{\Omega(a)})$. 

    Also, $W: \Laz(\fa^\circ)\to (\fa,\circ)$ is a group isomorphism because it is the composition of the group homomorphisms $\Laz(\fa^\circ) \to \Laz(\fg):a\mapsto (a,\mathcal{L}_a)$, $\gamma:\Laz(\fg)\to \gamma(\fg)$ and $\pr_{\fa}:\gamma(\fg)\to (\fa,\circ)$.
\end{proof}
Note that in the case that $\fa$ is abelian, one obtains the group of flows as in \cite{Agrachev1981ChronologicalFields}.
It is less trivial to see how the above construction is related to the formal integration as described by Bai, Guo, Sheng and Tang in \cite{Bai2023Post-groupsEquation}. Let $\F$ be a field of characteristic $0$. Then Bai, Guo, Sheng and Tang proved that to any connected complete post-Lie $\F$-algebra $(\fa,\tr)$ one can associate a skew brace $(\fa,\cdot,\circ)$ where
$$a\cdot b=\BCH(a,b),~~~~a\circ b=a\cdot \exp(\mathcal{L}_{\Omega'(a)})(b).$$
We first note that in our terminology their notion of a connected complete post-Lie $\F$-algebra is a Lazard post-Lie $\F$-algebra $(\fa,\tr)$ such that moreover $\fa_i\tr \fa_j\subseteq \fa_{i+j}$, or equivalently, such that $\fa_i^\circ=\fa_i$ for all $i\geq 0$. Note that under this extra assumption, if $\fa_i=0$ for some $i$, then $\fa_{i-1}$ is contained in the socle hence $\Soc(\fa,\tr)\neq 0$. In \cite[Section 3]{Fried1986DistalityStructures} an example is given of a left nilpotent pre-Lie $\mathbb{R}$-algebra with trivial socle, so it can be given a filtration which makes it a Lazard Lie $\mathbb{R}$-algebra but not a connected complete $\mathbb{R}$-algebra in the sense of Bai, Guo, Sheng and Tang.

We quickly recall their construction: Let $(\fa,\tr)$ be a connected complete post-Lie $\F$-algebra $(\fa,\tr)$ and consider the universal enveloping algebra $\U(\fa)$ together with the filtration where $\U(\fa)_0=\U(\fa)$ and $\U(\fa)_i$ is the subspace generated by $x_1\cdots x_n$ where $x_j\in \fa_{d_j}$ such that $d_1+\dots+d_n=i$. This makes $\U(\fa)$ into a filtered $\F$-algebra. On $\U(\fa)$ we define a new multiplication $\star $ which for $a\in \fa$, $b\in \U(\fa)$ is given by $a\star b=ab+\delta_a(b)$, where $\delta_a$ is the unique derivation of $\U(\fa)$ with $\delta_a(z)=a\tr z$ for $z\in \fa$. In this way, $(\U(\fa),\star )$ becomes a filtered $\F$-algebra (where we keep the same filtration as before). This operation is then extended to the completion $\hU(\fa)$ with respect to the filtration in order to obtain a complete $\F$-algebra $(\hU(\fa),\star )$. For $a\in \fa$, we define $\Omega'(a)=\log_{\star }\exp(a)$, where the exponential map with respect to the usual multiplication in $\hU(\fa)$ is first applied, followed by the logaritmic map in $(\hU(\fa),\star )$. 

Recall that $\fa$ embeds into $\hU(\fa)$. One sees that 
$\rho: \aff(\fa)\to \End(\hU(\fa))$,
given by $\rho_{(a,\delta)}(b)=ab+\delta(b)$ is a homomorphism of filtered Lie $\F$-algebras.
Here we do not distinguish between a derivation $\delta$ of $\fa$ and its unique extension to a derivation of $\hU(\fa)$. Clearly, $a\star b=\rho_{(a,\cL_a)}(b)$ for all $a\in \fa$ and $b\in \hU(\fa)$, thus $\exp_\star (a)=\exp(\rho_{(a,\cL_a)})(1)$. Since derivations map $1$ to $0$, also $$\exp_\star (a)=\exp(\rho_{(a,\cL_a)})\exp(\rho_{(0,-\cL_a)})(1),$$
where $\exp_\star $ denotes the exponential in $(\hU(\fa),\star )$. By \cref{prop: Lazard in ring} and the definition of $W$, we find $$\exp(\rho_{(a,\cL_a)})\exp(\rho_{(0,-\cL_a)})=\exp(\rho_{(W(a),0)}),$$
thus $\exp_\star (a)=\exp(W(a))$. We conclude that $\Omega=\Omega'$.
\begin{proposition}\label{prop: back to Lie}
    Let $(A,\cdot,\circ)$ be a Lazard skew brace. Then 
    $$\Omega:A\mapsto A:a\mapsto U(a,\lambda_a),$$
    is a bijection and $\bL(A,\cdot,\circ)=(\Laz^{-1}(A,\cdot),\tr)$ with 
    $a\tr b=\log(\lambda_{W(a)})(b),$ is a Lazard Lie ring, where $W:=\Omega^{-1}$. Moreover, 
    $\Omega: (A,\circ)\to \fa^\circ$ is an isomorphism of filtered Lie rings, where $(\fa,\tr)=\bL(A,\cdot)$.

    If $(A,\cdot)$ is abelian, then $$\Omega(a)=\sum_{k=2}^\infty \frac{1}{k}(-1)^{k+1}(\lambda_a-\id)^{k-1}(a).$$
\end{proposition}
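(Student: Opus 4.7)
The plan is to invert each step in the proof of \cref{prop: group of flows}. First I would apply \cref{prop: correspondence skew braces subgroups} to the Lazard skew brace $(A,\cdot,\circ)$ to obtain the regular subgroup $G=\{(a,\lambda_a)\mid a\in A\}\leq \Hol(A,\cdot)$. Since $(A,\circ)$ is Lazard for the filtration of \cref{cor: filtration on adjoint group}, the equality $(A,\circ)_0=(A,\circ)_1$ forces $\lambda_a\in \Aut(A,\cdot)_1$ for every $a\in A$, so that $G$ actually lies in $\Hol(A,\cdot)^+$ and is a regular Lazard subgroup; verifying this inclusion is the main subtlety of the argument, as it is precisely the hypothesis that $(A,\circ)$ itself be Lazard that enters here.

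Setting $\fa:=\Laz^{-1}(A,\cdot)$ and invoking \cref{prop: correspondence gamma}, one then obtains $\fg:=\gamma^{-1}(G)$, a regular Lazard sub Lie ring of $\aff(\fa)^+$. By \cref{prop: correspondence postLie subLie}, $\fg$ equals $\{(c,\cL_c)\mid c\in\fa\}$ for a unique post-Lie ring operation $\tr$ on $\fa$, and this gives the definition of $\bL(A,\cdot,\circ)$. To read off the formula for $\tr$, I would expand $\gamma^{-1}(a,\lambda_a)=(U(a,\lambda_a),\log \lambda_a)=(\Omega(a),\log \lambda_a)$ and compare it with $(c,\cL_c)\in\fg$ to conclude $\cL_{\Omega(a)}=\log \lambda_a$. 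Substituting $a\mapsto W(a)$, where $W:=\Omega^{-1}$, this becomes $a\tr b=\cL_a(b)=\log(\lambda_{W(a)})(b)$, as claimed; the bijectivity of $\Omega$ follows from that of $\gamma$ combined with the bijections $A\leftrightarrow G$ and $\fa\leftrightarrow \fg$.

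Finally, to confirm that $(\fa,\tr)$ is Lazard and that $\Omega:(A,\circ)\to \fa^\circ$ is an isomorphism of filtered Lie rings, I would mirror the last step of the proof of \cref{prop: group of flows}: $\Omega$ factors as $(A,\circ)\to G:a\mapsto (a,\lambda_a)$, followed by $\gamma^{-1}:G\to \fg$, followed by $\pr_\fa:\fg\to \fa^\circ$, the last map being a Lie ring isomorphism by \cref{cor: adjoint lie structure}. Transporting the Lazard filtration of $(A,\circ)$ along this composition shows that $\fa^\circ$ is Lazard, so $\bL(A,\cdot,\circ)$ is indeed a Lazard post-Lie ring. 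When $(A,\cdot)$ is abelian, the closed-form expression for $\Omega$ is read off directly from the abelian formula for $U$ in \cref{lem: W abelian}.
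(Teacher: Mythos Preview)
Your proposal is correct and follows essentially the same approach as the paper: pass from the Lazard skew brace to the regular Lazard subgroup $G\leq\Hol(A,\cdot)^+$, apply $\gamma^{-1}$ via \cref{prop: correspondence gamma} to obtain a $t$-bijective Lazard sub Lie ring of $\aff(\fa)^+$, read off $\tr$ via \cref{prop: correspondence postLie subLie}, and factor $\Omega$ as a composite of three isomorphisms. Your argument is in fact slightly more explicit than the paper's in two places: you justify why $G$ lands in $\Hol(A,\cdot)^+$ (using $(A,\circ)_0=(A,\circ)_1$ to force $\lambda_a\in\Aut(A,\cdot)_1$), and you spell out that the Lazard property of $\fa^\circ$ is obtained by transporting the filtration along $\Omega$ rather than leaving it implicit.
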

\begin{proof}
    Let $(A,\cdot,\circ)$ be Lazard skew brace and let $G$ be the associated regular Lazard subgroup of $\Hol(A,\cdot)^+$. Then $\Omega$ is a bijection since it is the composition of the bijections $A\mapsto G:a\mapsto (a,\lambda_a)$ and $U:G\to A$ with $U$. The fact that $U$ is bijective, follows from \cref{prop: correspondence gamma}.

    From \cref{prop: correspondence gamma} it also follows that $\gamma^{-1}(G)$ is a Lazard sub Lie ring of $\aff(\Laz^{-1}(A,\cdot))^+$, thus we obtain a Lazard post-Lie ring $(\Laz^{-1}(A,\cdot),\tr)$. We find $a\tr b=\mathcal{L}_a$ where $\mathcal{L}_a$ is uniquely determined by the fact that $(a,\mathcal{L}_a)\in \gamma^{-1}(G)$. Since $\gamma^{-1}(G)=\{(\Omega(a),\log(\lambda_a))\mid a\in A\}$ we find that $\mathcal{L}_a=\log(\lambda_{\Omega^{-1}(a)}).$

    Also, $\Omega$ is $(A,\circ)\to \bL(A,\cdot,\circ)^\circ$ is an isomorphism of filtered Lie rings since it is the composition of the isomorphisms $(A,\circ)\to G:a\mapsto (a,\lambda_a)$, $\gamma^{_1}:G\to \gamma^{-1}(G)$ and $\pr_{A}:\gamma^{-1}(G)\to \Laz^{-1}(A,\cdot)$.
\end{proof}

\begin{theorem}\label{theorem: final correspondence}
    The constructions in \cref{prop: group of flows} and \cref{prop: back to Lie} are mutually inverse and functorial. The map $\Omega$ associated to a Lazard post-Lie ring $(\fa,\tr)$ coincides with the map $\Omega$ associated to $\bS(\fa,\tr)$.

    Lazard sub post-Lie rings of $(\fa,\tr)$ and Lazard sub skew braces of $\bS(\fa,\tr)$ coincide and moreover, Lazard left ideals, Lazard strong left ideals and Lazard ideals of $(\fa,\tr)$ and $\bS(\fa,\tr)$ coincide.
\end{theorem}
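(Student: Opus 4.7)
The plan is to show that both $\bS$ and $\bL$ factor through the bijection $\gamma$ of \cref{prop: correspondence gamma}, so that the theorem reduces to properties of $\gamma$ that are already established. Concretely, for a Lazard post-Lie ring $(\fa,\tr)$, \cref{prop: correspondence postLie subLie} identifies it with the $t$-bijective Lazard sub Lie ring $\fg:=\{(a,\cL_a)\mid a\in\fa\}$ of $\aff(\fa)^+$; \cref{prop: correspondence gamma} then sends $\fg$ to the regular Lazard subgroup $\gamma(\fg)$ of $\Hol(\Laz(\fa))^+$; and \cref{prop: correspondence skew braces subgroups} identifies $\gamma(\fg)$ with a Lazard skew brace that is by construction precisely $\bS(\fa,\tr)$. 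Running the same composite backwards through $\gamma^{-1}$ yields $\bL$, so the mutual inverse assertion is immediate from $\gamma^{-1}\gamma=\id$ and $\gamma\gamma^{-1}=\id$. Functoriality reduces similarly: a post-Lie morphism $f:(\fa,\tr)\to(\fb,\tr)$ induces the filtered Lie ring morphism $(a,\cL_a)\mapsto(f(a),\cL_{f(a)})$ between the associated sub Lie rings, and naturality of $\gamma$ (from its uniqueness in \cref{prop: gamma} together with functoriality of $\Laz$ in \cref{theorem: Lazard}) produces the corresponding skew brace morphism; the reverse direction is identical using $\gamma^{-1}$.

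For the $\Omega$-statement, let $\Omega=W^{-1}$ denote the post-Lie map from \cref{prop: group of flows} and set $b:=\Omega(a)$. Then
\[\gamma(b,\cL_b)=(V(b,\cL_b),\exp(\cL_b))=(W(b),\exp(\cL_b))=(a,\lambda_a),\]
using $\lambda_a=\exp(\cL_{\Omega(a)})=\exp(\cL_b)$ from the definition of $\bS(\fa,\tr)$. Applying $\gamma^{-1}$ and projecting onto the first coordinate gives $U(a,\lambda_a)=b=\Omega(a)$, which is exactly the definition of the $\Omega$ associated to $\bS(\fa,\tr)$ in \cref{prop: back to Lie}.

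For the substructure correspondence, I check that each of the four notions on a Lazard additive subgroup $I\leq \fa$ translates under $\gamma$ to its skew brace counterpart on $I\subseteq \Laz(\fa)$. The equivalence between being a Lazard sub Lie ring of $\fa$ and being a Lazard subgroup of $\Laz(\fa)$ is the corollary to \cref{theorem: Lazard}, which via \cref{lem: conjugation BCH} also identifies Lazard ideals of $\fa$ with Lazard normal subgroups of $\Laz(\fa)$; applied to $\fa^\circ$ and combined with the group isomorphism $W:\Laz(\fa^\circ)\to(A,\circ)$ of \cref{prop: group of flows}, the same argument handles the correspondence between Lazard ideals of $\fa^\circ$ and Lazard normal subgroups of $(A,\circ)$. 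The remaining left-multiplication conditions (closure under $\cL_a$ on the post-Lie side, for $a\in I$ in the sub post-Lie ring case and for $a\in\fa$ in the left ideal case) correspond via \cref{lem: lazard derivations and auto} to closure under $\lambda_{W(a)}=\exp(\cL_a)$ on the skew brace side.

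The main obstacle lies in making the $\exp$/$\log$ translation of invariance rigorous, namely showing that a closed additive subgroup $I$ of $\fa$ is stable under a derivation $\delta\in\Der(\fa)_1$ if and only if it is stable under $\exp(\delta)$. This is not a formal consequence of \cref{lem: lazard derivations and auto} alone, since that lemma concerns endomorphisms of $\fa$ itself rather than substructures; however, it follows from truncating the exponential and logarithmic series at successively deeper stages of the filtration and using closedness of $I$ in the filtration topology, in the spirit of the computations in the proof of \cref{lem: lazard derivations and auto}. Once this is in hand, each of the four substructure equivalences follows with minimal extra work.
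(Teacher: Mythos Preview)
Your approach is essentially the paper's: both constructions are factored through the bijection $\gamma$ of \cref{prop: correspondence gamma}, so that mutual inverseness, functoriality, and the substructure correspondence all reduce to properties of $\gamma$ and of the classical Lazard correspondence. Your computation for the coincidence of the two $\Omega$'s is exactly the right one, and your caution about the $\exp$/$\log$ translation of invariance is appropriate (the paper simply asserts this step as ``clearly''; the truncation argument you sketch, using that a Lazard sub Lie ring $I$ has $I_i$ $P_i$-divisible and is closed, is what makes it work in both directions).

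There is one point you should make explicit, and the paper does: for a Lazard strong left ideal (in particular for the ideal case) one needs $W(I)=I$. Your argument for ideals reads ``$I$ is a Lazard ideal of $\fa^\circ$ $\Leftrightarrow$ $I$ is normal in $\Laz(\fa^\circ)$ $\Leftrightarrow$ $W(I)$ is normal in $(A,\circ)$'', but the conclusion we want is that $I$ itself is normal in $(A,\circ)$. The missing step is that $W$ (and $\Omega$) restricts to a bijection of $I$ onto itself whenever $I$ is closed under $[-,-]$ and under left multiplication by its own elements: indeed $W(a)=V(a,\cL_a)$ is a convergent combination of iterated brackets and $\cL_a$'s applied to $a$, all of which remain in $I$, and likewise for $\Omega$ via $U$. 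The same observation is what makes your sub post-Lie ring argument complete, since ``closure under $\lambda_{W(a)}$ for $a\in I$'' only becomes ``closure under $\lambda_a$ for $a\in I$'' once $W(I)=I$ is known.
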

\begin{proof}
    It is straightforward to verify that $\bS$ and $\bL$ are mutually inverse constructions and that their associated maps $\Omega$ coincide. We prove the functoriality of $\bS$, the proof for $\bL$ is analogous. Let $f:(\fa,\tr)\to (\fb,\tr)$ be a homomorphism of Lazard post-Lie rings and set $(\fa,\cdot,\circ)=\bS(\fa,\tr)$ and $(\fb,\cdot,\circ)=\bS(\fb,\tr)$. Let $W_\fa$ and $W_\fb$ denote their respective maps $W$. Then by \cref{theorem: Lazard} $f:(\fa,\cdot)\to (\fb,\cdot)$ is a homomorhism of Lazard groups. As $W$ is an isomorphism between $\Laz(\fa^\circ)$ and $(\fa,\circ)$, it is sufficient to proof $fW_\fa=W_\fb f$ in order to conclude that $f: (\fa,\circ)\to (\fb,\circ)$ is a group homomorphism. This equality follows direct from the fact that $f\mathcal{L}_a=\mathcal{L}_{f(a)}f$ for all $a\in \fa$.

    The equivalence of Lazard post-Lie rings and Lazard sub skew braces is a direct corollary of \cref{theorem: final correspondence}. In particular, note that for $\fb$ a Lazard sub post-Lie ring of $(\fa,\tr)$ we have $W_\fb(b)=W_\fa(b)$. In particular, $W_\fa(\fb)=\fb$.

    Clearly the Lazard sub Lie rings of $\fa$ invariant under $\{\mathcal{L}_x\mid x\in \fa\}$ and the ones invariant under $\{\exp(\mathcal{L}_x)\mid x\in \fa\}$ coincide, which yields the equivalence of Lazard left ideals. Taking into account \cref{theorem: Lazard}, the equivalence of Lazard strong left ideals follows. 

    Let $\fb$ be a Lazard strong left ideal of $(\fa,\tr)$. As $W:\Laz(\fa^\circ)\to (A,\circ)$ is a group isomorphism, we find that $\fb$ is an ideal of $\fa^\circ$ if and only if $W(\fb)$ is a normal subgroup of $(A,\circ)$. Since $W(\fb)=\fb$, the equivalence of Lazard ideals of $(\fa,\tr)$ and $\bS(\fa,\tr)$ follows.
\end{proof}
\begin{remark}
    Recall that a skew brace $(A,+,\circ)$ is \emph{square-free} if $a*a=0$ for all $a\in A$. Similarly, we say that a post-Lie ring $(\fa,\tr)$ is \emph{square-free} if $a\tr a=0$ for all $a\in \fa$. Note that the property of being square-free is preserved by  $\bS$ and $\bL$ and $W=\Omega=\id$ in this case. Therefore these classes behave particularly well under the correspondence and eliminate most of the computations involved.
\end{remark}

    The construction $\bL$ coincides (under the necessary assumptions) with the geometric construction in \cite[Theorem 4.3]{Bai2023Post-groupsEquation} (or in \cite[Section 3]{Burde2009AffineGroups}). We shortly recall this construction. Let $(G,\cdot,\circ)$ be a skew brace such that $(G,\cdot)$ and $(G,\circ)$ are Lie groups and the map $a\mapsto (a,\lambda_a)$ is smooth. One defines a post-Lie structure on $\fg$, the Lie algebra associated to $(G,\cdot)$, by starting with the group homomorphism $\lambda:G\to \Aut(G,\cdot):a\mapsto \lambda_a$, which naturally yields a group homomorphism $\alpha:G\to \Aut(\fg):a\mapsto d\lambda_a$. Subsequently, differentiation yields a map $d\alpha:\fg\to \Der(\fg)$ and $(\fg, \tr')$ is a post-Lie algebra where $a\tr' b=(d\alpha)_a(b)$.

  Let $(\fa,\cdot,\circ)=\bS(\fa,\tr)$ for a finite dimensional Lazard Lie $\mathbb{R}$-algebra $(\fa,\tr)$. Then $(\fa,\cdot)$ is a Lie group and its associated Lie algebra associated is isomorphic to $\fa$ such that under this identification, the exponential map $\fa\to (\fa,\cdot)$ is just the identity. Thus, under this identification of $(\fa,\cdot)$ and $\fa$ we have $\lambda_g=\alpha_g$. Hence, $b\mapsto a\tr'b$ is the differential of the map $\lambda:a\mapsto \exp(\mathcal{L}_{\Omega(a)})(b)$. We first determine $d\Omega$. Since $\Omega W=\id$, it is sufficient to determine $dW$. By \eqref{eq: formula V}, we find $dW=\id$ hence also $d\Omega=\id$. The differential of the map $\exp(\mathcal{L}):\fa\to \Aut(\fa):a\mapsto \exp(\mathcal{L}_a)$ in $a\in \fa$ is
  $$d(\exp(\mathcal{L}))(a)=\left.\frac{d}{dt}\right\rvert_{t=0} \exp(\mathcal{L}_{ta})=\mathcal{L}_a.$$ 
  We conclude that $(d\lambda)_a=\mathcal{L}_a$ and thus $a\tr' b=a\tr b$.

\begin{example}
    Let $A$ be a Lazard ring and let $\mathcal{L}_a$ denote left multiplication by $a$. Then $A^+$ is a Lazard pre-Lie algebra for $a\tr b=ab$ and it is well-known that the associated group of flows is given by $a\circ b=a+ab+b$. We can verify this directly: clearly $W(a)=\exp(a)-1$, hence $\Omega(a)=\log(1+a)$. We now find $$a\circ b=a+\exp(\mathcal{L}_{\log(1+a)})(b)=a+ab+b.$$
    Conversely, starting from the brace $(A,+,\circ)$ we find $\Omega(a)=\log(1+a)$ and thus $W(a)=\exp(a)-1$. we recover the original multiplication as $$a\tr b=\log(\lambda_{W(a)})(b)=\log(\mathcal{L}_{1+W(a)})(b)=a+\log(\mathcal{L}_{\exp(a)})(b)=ab.$$
    Recall that this is precisely the classical correspondence between two-sided braces and Jacobson radical rings \cite{rump2007braces}.
\end{example}

\begin{theorem}\label{theorem: correspondence p}
    Let $p^n$ be a prime power and $k<p$. There is a bijective correspondence between post-Lie rings of size $p^n$ and $L$-nilpotency class $k$, and skew braces of size $p^n$ and $L$-nilpotency class $k$.
\end{theorem}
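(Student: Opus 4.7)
The plan is to realize this correspondence as a special case of \cref{theorem: final correspondence}, by showing that in the finite $p^n$-setting with class $k<p$ the canonical filtrations introduced in \cref{section: post lie,section: skew} automatically provide Lazard structures, and that $L$-nilpotency class is preserved under this identification.

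First I would equip any post-Lie ring $(\fa,\tr)$ of size $p^n$ and $L$-nilpotency class $k$ with its canonical filtration $L^i(\fa)$ from \cref{lem: canonical filtration post-Lie}, and check that this makes it into a Lazard post-Lie ring. Since $L^{k+1}(\fa)=0$, the induced topology is discrete, hence complete and Hausdorff. The underlying additive group is a finite $p$-group, so multiplication by any integer coprime to $p$ is a bijection; the $P_i$-divisibility conditions for $i \leq k < p$ are therefore automatic, and $\fa$ is Lazard. For $\fa^\circ$, \cref{cor: adjoint lie structure} combined with \cref{theorem: nilpotency class multiplicative lie ring} shows that the induced filtration on $\fa^\circ$ also collapses at index $k+1$, and the same divisibility argument applies. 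The analogous reasoning, using \cref{lem: left nilpotent nilpotent type brace,cor: filtration on adjoint group,theorem: nilpotency class multiplicative skew brace}, shows that every skew brace of size $p^n$ and $L$-nilpotency class $k < p$ is Lazard under its canonical filtration $L^i(A)$.

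\Cref{theorem: final correspondence} then supplies mutually inverse constructions $\bS$ and $\bL$ between these Lazard structures, preserving the underlying set (hence the size $p^n$). It remains to show that $L$-nilpotency class is preserved. Let $(\fa,\tr)$ have class $k$ and write $(A,\cdot,\circ)=\bS(\fa,\tr)$. By the last assertion of \cref{theorem: final correspondence}, each $L^i(\fa)$ is a Lazard strong left ideal of $A$. Using the BCH formula \eqref{eq: BCH} and the inverse BCH formulae \eqref{eq: inverse BCH 1}--\eqref{eq: inverse BCH 2}, one verifies that for all $a \in A$ and $b \in L^i(\fa)$ with $i \geq 1$ one has both $[a,b]_\cdot \in L^{i+1}(\fa)$ and $a*b \in L^{i+1}(\fa)$: the leading terms are $[a,b]$ and $a\tr b$, which lie in $L^{i+1}(\fa)$ by construction, and every higher-order term iterates $\cL_x$ or $\ad_x$ at least once more and so lands in even deeper stages of the filtration. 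The minimality of the canonical skew brace filtration then forces $L^i(A)\subseteq L^i(\fa)$, so $A$ has $L$-nilpotency class at most $k$. The symmetric argument applied to $\bL$ yields the reverse inclusion, hence $L^i(A)=L^i(\fa)$ and the classes coincide.

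The main obstacle is this last comparison: the Lie-theoretic defining relations of $L^\bullet(\fa)$ must be translated into the group- and brace-theoretic ones, which requires controlling the higher-order terms of the BCH series and of $\exp(\cL_{\Omega(a)})$. This is essentially a bookkeeping calculation that only uses the inclusions $\fa \tr L^i(\fa) \subseteq L^{i+1}(\fa)$ and $[\fa,L^i(\fa)] \subseteq L^{i+1}(\fa)$ built into the canonical filtration, together with their iterative stability under $\cL_x$ and $\ad_x$; no additional conceptual ingredient beyond \cref{theorem: final correspondence} is required.
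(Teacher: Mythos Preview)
Your proposal is correct and follows the same strategy as the paper: endow both structures with their canonical filtrations, check these are Lazard (which reduces to $\fa_{k+1}=\fa^\circ_{k+1}=0$, respectively $A_{k+1}=(A,\circ)_{k+1}=1$, together with the usual $P_i$-divisibility of $p$-groups for $i<p$), and invoke \cref{theorem: final correspondence}. The paper simply asserts that $L$-nilpotency class is preserved, whereas you spell this out; your BCH argument is valid but can be shortened by observing that the Lazard condition $(A,\circ)_1=A$ forces $\lambda_a\in\Aut(A,\cdot)_1$ for every $a$ via \cref{cor: filtration on adjoint group}, so $a*b=\lambda_a(b)\cdot b^{-1}\in A_{i+1}$ whenever $b\in A_i$, giving $L^i(A)\subseteq A_i=L^i(\fa)$ immediately and the reverse inclusion by symmetry.
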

\begin{proof}
    Let $(\fa,\tr)$ be a post-Lie ring of size $p^n$ and $L$-nilpotency class $k$. With its canonical filtration $(\fa,\tr)$ is a Lazard post-Lie ring since both $\fa$ and $\fa^\circ$ are $p$-groups with $\fa_p=\fa^\circ_p=\{0\}$. By \cref{theorem: final correspondence} we obtain a Lazard skew brace of size $p^n$ and $L$-nilpotency class $k$,

    Conversely, let $(A,\cdot,\circ)$ be a skew brace of size $p^n$ and $L$-nilpotency class $k$. Then with its canonical filtration it becomes a Lazard skew brace since $(A,\cdot)$ and $(A,\circ)$ are $p$-groups and $A_p=(A,\circ)_p=\{1\}$. By \cref{theorem: final correspondence} we obtain a Lazard post-Lie ring of size $p^n$ and $L$-nilpotency class $k$.
\end{proof}
We are now ready to prove \cref{theorem: correspondence finite p}.
\begin{proof}[Proof of \cref{theorem: correspondence finite p}]
    Recall that any skew brace of order $p^k$ is left nilpotent by \cite[Proposition 4.4]{CSV19}. Also, if a skew brace or post-Lie ring of order $p^k$ is $L$-nilpotent, its $L$-nilpotency class clearly can not exceed $k$. The statement now follows from \cref{theorem: correspondence p}, keeping in mind the characterisation of $L$-nilpotency in \cref{lem: left nilpotent nilpotent type Lie,lem: left nilpotent nilpotent type brace}.
\end{proof}

Let $(A,\cdot,\circ)$ be a skew brace and $a\in A$. For an integer $n$ we denote the $n$-th power of $a$ in $(A,\cdot)$ by $a^n$ and its $n$-th power in $(A,\circ)$ is denoted by $a^{\circ n}$. The following result extends Proposition 15 and Lemma 17 of \cite{SmoktunowiczOnThePassage2022preprint}.
\begin{proposition}
    Let $(A,\cdot,\circ)$ be a Lazard skew brace and let $n$ be an integer, then $\{a^n\mid a\in A\}=\{a^{\circ n}\mid a\in A\}$ and $\{a\in A\mid a^n=1\}=\{a\in A\mid a^{\circ n}=1\}$, and these sets are Lazard ideals of $(A,\cdot,\circ)$.
\end{proposition}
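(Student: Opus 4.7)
The plan is to work through the correspondence of \cref{theorem: final correspondence}. Set $(\fa,\tr)=\bL(A,\cdot,\circ)$, so that $A=\fa$ as sets, $(A,\cdot)=\Laz(\fa)$, and $W\colon\Laz(\fa^\circ)\to(A,\circ)$ is an isomorphism of filtered groups. Because $[a,a]=0$ in $\fa$ and $\{a,a\}=0$ in $\fa^\circ$, the BCH formulae for both Lie rings collapse on the diagonal and induction yields $a^n=na$ in $\Laz(\fa)$ and the same identity in $\Laz(\fa^\circ)$. Transporting the latter through $W$ gives the key identifications
\begin{align*}
    \{a^n\mid a\in A\}&=n\fa, & \{a\in A\mid a^n=1\}&=\fa[n],\\
    \{a^{\circ n}\mid a\in A\}&=W(n\fa), & \{a\in A\mid a^{\circ n}=1\}&=W(\fa[n]),
\end{align*}
where $\fa[n]:=\{x\in\fa\mid nx=0\}$. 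Both assertions of the proposition are thus reduced to showing that $n\fa$ and $\fa[n]$ are Lazard ideals of $(\fa,\tr)$ that are fixed setwise by $W$.

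I would next verify the algebraic ideal structure of $n\fa$ and $\fa[n]$ inside $(\fa,\tr)$. This is a routine bilinearity check: for $y=nx\in n\fa$ and $a\in\fa$ one computes $a\tr y=n(a\tr x)$, $y\tr a=n(x\tr a)$ and $[a,y]=n[a,x]$, which all lie in $n\fa$, and for $y\in\fa[n]$ one instead uses $n(a\tr y)=a\tr(ny)=0$, $n(y\tr a)=(ny)\tr a=0$ and $n[a,y]=0$ to conclude stability of $\fa[n]$ under $\tr$ on either side and under $[-,-]$. Both subgroups are therefore ideals of $(\fa,\tr)$ in the sense of \cref{section: preliminaries}.

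For the Lazard structure I would take the filtration $(n\fa)_i=n\fa_i$ on $n\fa$ and the induced filtration $(\fa[n])_i=\fa[n]\cap\fa_i$ on $\fa[n]$. The bracket condition $[(n\fa)_i,(n\fa)_j]\subseteq(n\fa)_{i+j}$ is immediate from bilinearity, and the corresponding condition for $\fa[n]$ is inherited from $\fa$. The only delicate point is $P_i$-divisibility: for $\fa[n]$ the unique $p$-th root of a $p$-divisible element of $\fa[n]\cap\fa_i$ already sits in $\fa[n]$ because injectivity of $p$ on $\fa_i$ (a consequence of $P_i$-divisibility) propagates the vanishing of $n$-multiples; for $n\fa$ one writes $g=ny$ with $y\in\fa_i$, takes the unique $z\in\fa_i$ with $pz=y$, and notes that $nz\in n\fa_i$ is the desired root, with uniqueness again following from injectivity of $p$ on $\fa_i$. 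With the algebraic and topological ingredients in place, the concluding part of \cref{theorem: final correspondence} applies: any Lazard sub post-Lie ring $\fb$ satisfies $W(\fb)=\fb$, so $W(n\fa)=n\fa$ and $W(\fa[n])=\fa[n]$. This both collapses the two multiplicatively-defined sets in each pair to a single subset of $A$ and transfers the ideal property through the correspondence.

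The main obstacle is the Lazard verification for $n\fa$: although divisibility and the bracket condition are as above, one must also check that $n\fa$ is complete with respect to the chosen filtration, which is where choosing $(n\fa)_i=n\fa_i$ rather than the literal subspace filtration $n\fa\cap\fa_i$ becomes important. Once this technical point is settled, the entire proposition is essentially a translation through $\bL$, with the substantive work reduced to one-line bilinearity computations in the post-Lie ring.
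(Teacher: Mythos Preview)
Your proof follows the same route as the paper's: translate via $\bL$ to the post-Lie ring, identify the four sets with $n\fa$, $\fa[n]$, $W(n\fa)$, $W(\fa[n])$, check that $n\fa$ and $\fa[n]$ are Lazard ideals (the paper calls this ``straightforward'' and only treats $n\fa$ explicitly, while you spell out the bilinearity and divisibility), and then invoke \cref{theorem: final correspondence} to obtain $W(n\fa)=n\fa$ and $W(\fa[n])=\fa[n]$. One small caveat: the correspondence in \cref{theorem: final correspondence} is phrased for the induced (subspace) filtration, and with that filtration $n\fa$ is already complete---once $i$ exceeds every prime divisor of $n$, multiplication by $n$ is bijective on $\fa_i$, so $n\fa\supseteq\fa_i$ and $n\fa$ is open hence closed in $\fa$---so your alternative filtration $(n\fa)_i=n\fa_i$ is unnecessary and could in principle create a mismatch with the hypotheses of the theorem you invoke.
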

\begin{proof}
    We prove the statement for the first set, the second part is analogous. Let $(\fa,\tr)=\bL(A,\cdot,\circ)$. Then it is straightforward to see that $n\fa$ is a Lazard ideal of $(\fa,\tr)$, hence it is also a Lazard ideal of $(A,\cdot,\circ)$ by \cref{theorem: final correspondence}. Since $(A,\cdot)=\Laz(\fa)$, we find $n\fa=\{a^n\mid a\in A\}$. Also, as $W:\Laz(\fa^\circ)\to (A,\circ)$ is a group isomorphism we find $W(n\fa)=\{a^{\circ n}\mid a\in A\}$, but since $W$ maps any Lazard sub post-Lie ring to itself, we conclude that $\{a^n\mid a\in A\}=\{a^{\circ n}\mid a\in A\}$.
\end{proof}
\begin{proposition}
    Let $(\fa,\tr)$ be a Lazard post-Lie ring. Then the fix of $(\fa,\tr)$ coincides with that of $\bS(\fa,\tr)$, and similarly for the socle and annihilator. In particular, if $\fa$ is nilpotent, then $(\fa,\tr)$ is right nilpotent if and only if $\bS(\fa,\tr)$ is right nilpotent.
\end{proposition}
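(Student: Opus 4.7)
The plan is to establish the three set equalities $\Fix(\fa)=\Fix(A)$, $\Soc(\fa)=\Soc(A)$, $\Ann(\fa)=\Ann(A)$ (where $A:=\bS(\fa,\tr)$ and we identify $A=\fa$ as sets) by direct manipulation using the explicit formula $a\circ b=a\cdot \exp(\mathcal{L}_{\Omega(a)})(b)$ from \cref{prop: group of flows}, and then to derive the right-nilpotency statement via an upper-socle-series argument.

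For $\Fix$: the condition $b\circ a=b\cdot a$ unpacks to $\exp(\mathcal{L}_{\Omega(b)})(a)=a$. I would first prove the auxiliary fact that for any $\delta\in \Der(\fa)_1$ and any $a\in\fa$, $\exp(\delta)(a)=a$ if and only if $\delta(a)=0$; this follows by a graded argument since if $\delta(a)\in\fa_j\setminus\fa_{j+1}$ is nonzero, then $\delta^k(a)\in\fa_{j+k-1}$ for $k\geq 1$, so $\exp(\delta)(a)-a\equiv\delta(a)\not\equiv 0\pmod{\fa_{j+1}}$. Applying this to $\delta=\mathcal{L}_{\Omega(b)}\in\Der(\fa)_1$ and using that $\Omega$ is a bijection of $\fa$, we obtain that $a\in\Fix(A)$ if and only if $c\tr a=0$ for every $c\in\fa$, i.e.\ $a\in\Fix(\fa)$.

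For $\Soc$: $a\in\Soc(A)$ demands both that $a$ be central in $(A,\cdot)=\Laz(\fa)$ (which, via the inverse BCH formula for $Q$, is equivalent to $[a,b]=0$ for all $b\in\fa$) and that $a\circ b=a\cdot b$ for all $b$, which by the previous paragraph translates to $\mathcal{L}_{\Omega(a)}=0$. The crucial point is the equivalence $\mathcal{L}_a=0 \Leftrightarrow \mathcal{L}_{\Omega(a)}=0$: if $\mathcal{L}_a=0$ then $W(a)=V(a,\mathcal{L}_a)=V(a,0)=a$ by direct inspection of the formula for $V$, so $\Omega(a)=a$; applying the same reasoning to $a':=\Omega(a)$ yields the converse, since $W(a')=a'=W(\Omega(a))=a$ forces $a=a'$ and hence $\mathcal{L}_a=\mathcal{L}_{a'}=0$. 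Therefore $\Soc(A)=\{a\in\fa:[a,b]=a\tr b=0\text{ for all }b\}=\Soc(\fa)$, and since $\Ann$ is the intersection $\Fix\cap\Soc$ on either side, $\Ann(\fa)=\Ann(A)$ follows immediately.

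For the final assertion, I would introduce the upper socle series $S_0:=0$ and $S_{k+1}/S_k:=\Soc(\fa/S_k)$, together with its brace analogue. Each $S_k$ is a Lazard ideal, and by the functoriality of $\bS$ under quotients by Lazard ideals (from \cref{theorem: final correspondence}) combined with the $\Soc$-equality just proved, the two series coincide as subsets: $S_k(\fa)=S_k(A)$ for every $k$. The direction ``$S_n=\fa$ implies $(\fa,\tr)$ is right nilpotent and $\fa$ is Lie nilpotent'' is elementary, since by induction $\fa^{(i+1)}\subseteq S_{n-i}$ and $\gamma^{i+1}(\fa)\subseteq S_{n-i}$, and symmetrically on the brace side one gets $A^{(n+1)}=1$. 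The main obstacle is the converse: under the Lazard hypothesis (which also forces $\fa^\circ$ to be nilpotent as a Lie ring) together with $\fa$ Lie nilpotent and $(\fa,\tr)$ right nilpotent, one must show $\Soc(\fa)\neq 0$ whenever $\fa\neq 0$. This amounts to intersecting $Z(\fa)\neq 0$ (from Lie nilpotency) with the left-annihilator $\{a\in\fa:a\tr b=0\ \forall b\}\supseteq \fa^{(n-1)}\neq 0$ (from right nilpotency) nontrivially, which I would argue using the post-Lie axioms \eqref{eq: postLie 1}--\eqref{eq: postLie 2} and the nilpotency of $\fa^\circ$ to propagate generators of the last nontrivial right-derived term into the Lie center. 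Once $\Soc(\fa)\neq 0$ is secured, induction on the length of $\fa/S_k$ forces the socle series to terminate, and the equivalence of right nilpotency on both sides follows from $S_k(\fa)=S_k(A)$.
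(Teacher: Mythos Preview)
Your argument for $\Fix$, $\Soc$ and $\Ann$ is correct and in fact supplies the details the paper's one-line proof only gestures at with the observation that $W(a)=a$ whenever $a\tr a=0$ or $\lambda_a(a)=a$. The auxiliary lemma $\exp(\delta)(a)=a\Leftrightarrow\delta(a)=0$ for $\delta\in\End(\fa)_1$ is precisely what is needed for $\Fix$, and the equivalence $\mathcal L_a=0\Leftrightarrow\mathcal L_{\Omega(a)}=0$ via $V(a,0)=a$ handles $\Soc$ cleanly. This is the paper's approach made explicit.

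The right-nilpotency part, however, has a genuine gap. First, the parenthetical claim that the Lazard hypothesis forces $\fa^\circ$ to be nilpotent is false: a Lazard Lie ring is complete with respect to its filtration, hence residually nilpotent, but not nilpotent in general. Second, and more seriously, your proposed mechanism for showing $\Soc(\fa)\neq0$ --- intersecting the left-annihilator $\ker\mathcal L\supseteq\fa^{(n-1)}$ with $Z(\fa)$ by ``propagating via the post-Lie axioms and nilpotency of $\fa^\circ$'' --- does not go through as described: $\ker\mathcal L$ is not a Lie ideal of $\fa$ (from \eqref{eq: postLie 2} one obtains only $[x,y]\tr z=-(x\tr y)\tr z$ when $y\in\ker\mathcal L$), so the usual ``nonzero ideal of a nilpotent Lie ring meets the centre'' argument is unavailable, and the vague appeal to $\fa^\circ$-nilpotency has no footing. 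The paper sidesteps this entirely by citing Lemmas~5.3 and 5.4 of \cite{acri2020retractability}, which characterise right nilpotency of a skew brace with nilpotent additive group via termination of the upper socle series; the device behind that is the \emph{mixed} right series $R_1=\fa$, $R_{i+1}=\langle a\tr b,\ [a,b]:a\in R_i,\ b\in\fa\rangle$, whose eventual vanishing (under right nilpotency together with Lie nilpotency of $\fa$) yields a last nonzero term automatically contained in $\Soc(\fa)$. You should either cite the analogous statement or rebuild your argument around this series rather than $\fa^{(n-1)}\cap Z(\fa)$. Finally, your assertion that each $S_k$ is a Lazard ideal, so that $\bS$ passes to the quotient $\fa/S_k$, also needs justification in the generality of the proposition.
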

\begin{proof}
    The first part of the statement follows from the fact that $W(a)=a$ whenever $a\tr a=0$ or $\lambda_a(a)=a$, which in particular is the case when $a$ is contained in the fix or socle. The second part of the statement now follows from Lemmas 5.3 and 5.4 of \cite{acri2020retractability} and the analogous statement for post-Lie rings.
\end{proof}
\section{Formal differentiation through roots of unity}\label{section: differentiation}
Let $n\geq 1$. Recall that a \emph{primitive $n$-th root of unity} of a ring $R$ is an element $\xi\in R$ such that $\xi^n=1$ and $\xi^k-1$ not a zero divisor for $1\leq k<n$. It is well known that the finite field of prime power order $\F_{p^n}$ has a primitive $p^n-1$-th root of unity. Also, $\Z/p^n\Z$ always contains a primitive $p-1$-th root of unity.

Let $R$ be a commutative ring and let $M,N$ be $R$-modules. We say that a map $f:M\to N$ is a \textit{homogeneous polynomial of degree $k$} if $f(rm)=r^kf(m)$ for all $r\in R$, $m\in M$. A map $f:M\to N$ is polynomial of degree less than $k$ if it can be written as a sum $f=\sum_{i=0}^{k-1}f_i$ where $f_i$ is a homogeneous polynomial of degree $i$.
\begin{lemma}\label{lem: formal differential}
    Let $R$ be a ring and let $M,N$ be $R$-modules. Let $\xi\in R$ be a primitive $n$-th root of unity where $n$ is invertible in $R$ and let $f:M\to N$ be a polynomial of degree less than $n$, with homogeneous decomposition $f=\sum_{i=0}^{n-1}f_i$. Then $$\frac{1}{n}\sum_{j=0}^{n-1}\xi^{jk}f(\xi^{-j}m)=f_k(m)$$ for all $0\leq k<n$ and $m\in M$.
\end{lemma}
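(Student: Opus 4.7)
The plan is to use $R$-linearity in $f$ to reduce to the case of a single homogeneous component. The map $L_k$ defined by $L_k(f)(m) := \frac{1}{n}\sum_{j=0}^{n-1}\xi^{jk}f(\xi^{-j}m)$ is additive in $f$, so it will suffice to show $L_k(f_i) = \delta_{i,k}\,f_i$ for each $0 \le i \le n-1$. Applying the homogeneity relation $f_i(\xi^{-j}m) = \xi^{-ji} f_i(m)$ then factors $f_i(m)$ out of the sum, and the claim reduces to the purely ring-theoretic identity
$$\frac{1}{n}\sum_{j=0}^{n-1}\xi^{j(k-i)} = \delta_{i,k},$$
valid for $0 \le i,k \le n-1$.

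This is essentially the orthogonality of characters for the cyclic group generated by $\xi$, carried out over a general ring. When $i=k$ the sum collapses to $n\cdot 1 = n$, which cancels the $1/n$ prefactor. When $i \ne k$, set $\ell = k-i$; since $\xi^n = 1$, replacing $\ell$ by $\ell + n$ (if $\ell$ is negative) leaves the sum unchanged and places $\ell$ in the range $\{1,\dots,n-1\}$. Multiplying $S_\ell := \sum_{j=0}^{n-1}\xi^{j\ell}$ by $\xi^\ell - 1$ telescopes to $\xi^{n\ell} - 1 = 0$, and the primitive-$n$-th-root hypothesis is precisely that $\xi^\ell - 1$ is not a zero divisor for $1 \le \ell \le n-1$, so $S_\ell = 0$.

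The main subtlety, and really the only one, is justifying the reduction $\ell \mapsto \ell + n$ so that the primitive-root hypothesis can be applied in its given asymmetric form; this is transparent from $\xi^n = 1$. Invertibility of $n$ in $R$ then legitimises the factor $1/n$, and combining these observations with the linearity reduction above proves the identity.
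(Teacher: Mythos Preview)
Your proof is correct and follows essentially the same approach as the paper: substitute the homogeneous decomposition, use $f_i(\xi^{-j}m)=\xi^{-ji}f_i(m)$, and reduce to the orthogonality relation $\frac{1}{n}\sum_{j=0}^{n-1}\xi^{j(k-i)}=\delta_{i,k}$. The paper simply asserts this last identity, whereas you supply the standard telescoping justification; otherwise the arguments are identical.
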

\begin{proof}
    Evaluating the formula yields
    \begin{align*}
        \frac{1}{n}\sum_{j=0}^{n-1}\xi^{jk}f(\xi^{-j}m)&=\frac{1}{n}\sum_{i=0}^{n-1}\sum_{j=0}^{n-1}\xi^{j(k-i)}f_i(m)=f_k(m)
    \end{align*}
    since $\frac{1}{n}\sum_{j=0}^{n-1}\xi^{j(k-i)}$ equals $0$ if $i\neq k$ and it equals $1$ if $i=k$.
\end{proof}
For $k=1$, it makes sense to see the above manipulation as taking the directional derivative of $f$ through $m$ and subsequently evaluating at $0$. Therefore, in the above setting, it seems justified to introduce the notation
$$df:M\to N:m\mapsto \frac{1}{n}\sum_{i=0}^{n-1}\xi^{i}f(\xi^{-i}m).$$
Although the analogy is not explicitly mentioned, this technique plays a prominent role in \cite{Smoktunowicz2022AAlgebras,Smoktunowicz2022OnRings}. The following proposition extends \cite[Theorem 12]{Smoktunowicz2022OnRings}. 

\begin{proposition}\label{prop: lambda through formal differential}
    Let $p^k$ be a prime power, $\xi\in \Z$ a primitive $(p-1)$-th root of unity modulo $p^k$. Let $(A,\cdot,\circ)$ be a filtered skew brace of order $p^k$ such that $A_p=\{1\}$ and such that $A_i*A_j\subseteq A_{i+j}$ for all $i,j\geq 1$. Let $(\fa,\tr)=\bL(A,\cdot,\circ)$, then $$a\tr b=\frac{1}{p-1}\sum_{i=0}^{p-2}\xi^i\lambda_{\xi^{-i}a}(b),$$ where the sum is taken in $\Laz^{-1}(A,\cdot)$.
\end{proposition}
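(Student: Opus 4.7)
The plan is to recognize that, for fixed $b$, the map $a \mapsto \lambda_a(b)$ is a polynomial map $\fa \to \fa$ of degree at most $p-1$ whose linear homogeneous component is exactly $a \tr b$, and then to extract this component by the character-sum formula of \cref{lem: formal differential}.

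First I would fix the algebraic setup. Since $|A| = p^k$, the additive group $\fa = \Laz^{-1}(A,\cdot)$ is annihilated by $p^k$ and hence a $\Z/p^k\Z$-module; $p-1$ is a unit in this ring and $\xi$ reduces to a primitive $(p-1)$-th root of unity. The hypotheses $A_p = \{1\}$ and $A_i * A_j \subseteq A_{i+j}$ make $(A,\cdot,\circ)$ a Lazard skew brace with discrete filtration, and through \cref{theorem: final correspondence} they translate to $\fa_p = 0$ and $\fa_i \tr \fa_j \subseteq \fa_{i+j}$. Applying \cref{prop: group of flows} to $(\fa,\tr) = \bL(A,\cdot,\circ)$ yields the identity $\lambda_a = \exp(\cL_{\Omega(a)})$ with $\Omega = W^{-1}$ and $W(a) = V(a, \cL_a)$.

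Next I would exhibit $W$ and $\Omega$ as polynomial maps on $\fa$ of degree less than $p$. Expanding $W(a) = V(a,\cL_a)$ via \eqref{eq: formula V}, the order-$\ell$ term of the BCH series is an $\ell$-fold iterated bracket in the semi-direct sum arguments $(a,\cL_a)$ and $(0,-\cL_a)$. Under the rescaling $a \mapsto ra$ both arguments scale by $r$, so an $\ell$-fold bracket scales by $r^\ell$; its first coordinate $W_\ell(a)$ is therefore homogeneous of degree $\ell$ in $a$. Moreover $W_\ell(a) \in \fa_\ell$, since each of the $\ell$ occurrences of $a$ contributes one step in the filtration (via $\fa_i \tr \fa_j \subseteq \fa_{i+j}$ and the analogous bound for $[-,-]$), so $W_\ell \equiv 0$ for $\ell \geq p$ by $\fa_p = 0$. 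This gives
\[
W(a) = a + W_2(a) + \cdots + W_{p-1}(a).
\]
A degree-by-degree inductive inversion, valid because the denominators arising are units in $\Z/p^k\Z$ and successive corrections lie in deeper filtration pieces, produces
\[
\Omega(a) = a + \Omega_2(a) + \cdots + \Omega_{p-1}(a),
\]
with $\Omega_\ell$ homogeneous of degree $\ell$ in $a$ and $\Omega_\ell(a) \in \fa_\ell$.

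Substituting into $\lambda_a(b) = \sum_{k \geq 0} \frac{1}{k!} \cL_{\Omega(a)}^k(b)$ and expanding with respect to the $\Omega_\ell$, each general term $\Omega_{j_1}(a) \tr (\Omega_{j_2}(a) \tr \cdots \tr (\Omega_{j_k}(a) \tr b))$ is homogeneous of degree $d = j_1 + \cdots + j_k$ in $a$ and lies in $\fa_d$; it vanishes as soon as $d \geq p$. Hence $\lambda_a(b)$ is a polynomial map in $a$ of degree at most $p-1$, with constant term $b$ (from $k = 0$) and degree-$1$ component contributed only by $k = 1$, $j_1 = 1$, which is $\Omega_1(a) \tr b = a \tr b$. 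By the orthogonality underlying the proof of \cref{lem: formal differential}, the sum $\frac{1}{p-1} \sum_{i=0}^{p-2} \xi^i \lambda_{\xi^{-i}a}(b)$ picks out exactly the homogeneous parts whose degree is $\equiv 1 \pmod{p-1}$; in the range $\{0,1,\ldots,p-1\}$ the only such degree is $1$ itself (the aliased degree $p$ being killed by $\fa_p = 0$), and the sum thus equals $a \tr b$, as claimed. The main technical hurdle is the inductive polynomial inversion passing from the decomposition of $W$ to that of $\Omega$, where one must simultaneously control homogeneous degree and filtration level at each step; once this is secured, the remaining steps are essentially bookkeeping.
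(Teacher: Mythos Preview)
Your approach is correct and follows the same strategy as the paper's: establish that $a \mapsto \lambda_a$ is a polynomial map of bounded degree with linear component $a \mapsto \cL_a$, then extract that component via \cref{lem: formal differential}. Where the paper writes down a direct series expansion of $\lambda_{ta}$, you instead decompose $W$ via the BCH series, invert to obtain the homogeneous decomposition of $\Omega$, and substitute into $\exp(\cL_{\Omega(a)})$; note that your filtration bookkeeping can be tightened by one (the degree-$d$ term actually lies in $\fa_{d+1}$ since $b \in \fa_1 = \fa$), which gives degree $\leq p-2$ outright and makes the aliasing remark at the end unnecessary.
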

\begin{proof}
     It is clear that $(A,\cdot,\circ)$ with the given filtration is a Lazard skew brace. By \cref{theorem: final correspondence} we know that $(A,\cdot,\circ)=\bS(\fa,\tr)$ for some Lazard post-Lie ring $(\fa,\tr)$. In particular, $\fa$ has a natural structure of a $\Z/p^k\Z$-module since $|\fa|=p^k$. Since $W(\fa_i)=\fa_i$, it follows from \cref{lem: lazard derivations and auto} that the assumption on the filtration translates to $\fa_i\tr \fa_j\subseteq \fa_{i+j}$ for all $i,j\geq 1$.
     
     Consider the map $\lambda:\fa\to \End(\fa):a\mapsto \lambda_a$. We claim that $\lambda$ is polynomial of degree less than $p-1$ and that moreover its degree 1 component is $\mathcal{L}_a$. Indeed, from \cref{prop: group of flows} we find 
     $$\lambda_{ta}=\exp(\mathcal{L}_{W(ta)})=\sum_{k=0}^\infty  \sum_{i=1}^\infty \frac{1}{k!i!}\mathcal{L}^k_{\mathcal{L}_{ta}^{i-1}(ta)}=\sum_{k=0}^\infty  \sum_{i=1}^\infty \frac{1}{k!i!}t^{ki}\mathcal{L}^k_{\mathcal{L}_{a}^{i-1}(a)}.$$
     Clearly $\mathcal{L}^{i-1}_{a}(a)\in \fa_{i}$. By the earlier mentioned condition on the filtration it follows that $\mathcal{L}^k_{\mathcal{L}_{a}^{i-1}(a)}(b)\in A^{ki+1}$ for all $b\in \fa$, so $\mathcal{L}^k_{\mathcal{L}_{a}^{i-1}(a)}=0$ for $ki\geq p-1$. Therefore, $\lambda$ is polynomial of degree less than $p-1$ and its degree 1 component is $\mathcal{L}_a$.
     From \cref{lem: formal differential} we obtain $d\lambda(a)=\mathcal{L}_a$ which concludes the proof.
\end{proof}

Note that a filtration as in \cref{prop: lambda through formal differential} exists precisely if $A^{\{p\}}=\{1\}$ where $A^{\{1\}}=A$ and $A^{\{k+1\}}$ is the subgroup of $(A,\cdot)$ generated by all $a*b$ and $aba^{-1}b^{-1}$ where $a\in A^{\{i\}}$, $b\in A^{\{k+1-i\}}$, $1\leq i \leq  k$. For braces this is precisely the chain of ideals $A^{[k]}$ used to define strong nilpotency as in \cite{smoktunowicz2018engel}. 

Remark that even when $R$ has characteristic $p^n$, the degree of the polynomial functions considered in \cref{lem: formal differential} are not directly bounded by $p$. On the other hand, in the formulae for $\exp$ and $\log$ no terms of degree $p$ or more are allowed. 
It is therefore natural to ask whether the nilpotency condition in \cref{prop: lambda through formal differential} can be relaxed under the assumption that an appropriate root of unity is present. More precisely, the following question is natural to ask:
\begin{question}
    Let $(A,+,\circ)$ be an $\F_{p^n}$-brace (as in \cite[Definition 2]{Catino2019SkewAnnihilator}) such that $A^{[p^n]}=1$ and let $\xi\in \F_{p^n}$ be a primitive $(p^n-1)$-th root of unity. Does the operation
    $$a\tr b=\frac{1}{p^n-1}\sum_{i=0}^{p^n-2}\xi^i\lambda_{\xi^{-i}a}(b),$$ 
    yield a pre-Lie $\F_{p^n}$-algebra structure on the $\F_{p^n}$-algebra $(A,+)$?
\end{question}

\section*{Acknowledgements}
The author was supported by Fonds Wetenschappelijk Onderzoek – Vlaanderen [1160524, G004124N], and by the Vrije Universiteit Brussel [OZR3762].

The author would also like to express his gratitude to Agata Smoktunowicz for some interesting discussions which contributed to the clarity of the exposition in this paper.
\bibliographystyle{abbrv}
\bibliography{Bibliography.bib}
\end{document}